\date{February 10, 2013}
\newcommand{\ingraph}[1]{\includegraphics{#1}}     
\definecolor{Red}{rgb}{1,0,0}
\definecolor{Blue}{rgb}{0,0,1}
\definecolor{Olive}{rgb}{0.41,0.55,0.13}
\definecolor{Yarok}{rgb}{0,0.5,0}
\definecolor{Green}{rgb}{0,1,0}
\definecolor{MGreen}{rgb}{0,0.8,0}
\definecolor{DGreen}{rgb}{0,0.55,0}
\definecolor{Yellow}{rgb}{1,1,0}
\definecolor{Cyan}{rgb}{0,1,1}
\definecolor{Magenta}{rgb}{1,0,1}
\definecolor{Orange}{rgb}{1,.5,0}
\definecolor{Violet}{rgb}{.5,0,.5}
\definecolor{Purple}{rgb}{.75,0,.25}
\definecolor{Brown}{rgb}{.75,.5,.25}
\definecolor{Grey}{rgb}{.5,.5,.5}
\newcommand{\Hom}{\ensuremath{\operatorname{Hom}}\xspace}
\newcommand{\pr}{\mathbb{P}}
\newcommand{\R}{\mathbb{R}}
\newcommand{\Z}{\mathbb{Z}}
\newcommand{\G}{\mathbb{G}}
\newcommand{\F}{\mathbb{F}}
\newcommand{\Hg}{\mathbb{H}}
\newcommand{\Lnorm}{\mathbb{L}}
\newcommand{\M}{{\bf \mathcal{M}}}
\newcommand{\N}{{\bf \mathcal{N}}}
\newcommand{\Dk}{[0,D]^{(k+1)\times k}}
\newcommand{\Dm}{[0,D]^{(m+1)\times m}}
\newtheorem{theorem}{Theorem}
\newtheorem{ex}{Example}
\newtheorem{remark}{Remark}
\newtheorem{lemma}{Lemma}
\newtheorem{conj}{Conjecture}
\newtheorem{prop}{Proposition}
\newtheorem{coro}{Corollary}
\newtheorem{Defi}{Definition}
\newcommand{\ignore}[1]{\relax}
\definecolor{Red}{rgb}{1,0,0}
\definecolor{Blue}{rgb}{0,0,1}
\definecolor{Olive}{rgb}{0.41,0.55,0.13}
\definecolor{Green}{rgb}{0,1,0}
\definecolor{MGreen}{rgb}{0,0.8,0}
\definecolor{DGreen}{rgb}{0,0.55,0}
\definecolor{Yellow}{rgb}{1,1,0}
\definecolor{Cyan}{rgb}{0,1,1}
\definecolor{Magenta}{rgb}{1,0,1}
\definecolor{Orange}{rgb}{1,.5,0}
\definecolor{Violet}{rgb}{.5,0,.5}
\definecolor{Purple}{rgb}{.75,0,.25}
\definecolor{Brown}{rgb}{.75,.5,.25}
\definecolor{Grey}{rgb}{.5,.5,.5}
\definecolor{Pink}{rgb}{1,0,1}
\definecolor{DBrown}{rgb}{.5,.34,.16}
\definecolor{Black}{rgb}{0,0,0}
\author{
{\sf Christian Borgs}\thanks{Microsoft Research New England; e-mail: {\tt borgs@microsoft.com}.}
\and
{\sf Jennifer Chayes}\thanks{Microsoft Research New England; e-mail: {\tt jchayes@microsoft.com}.}
\and
{\sf David Gamarnik}\thanks{MIT; e-mail: {\tt gamarnik@mit.edu}.Research supported  by the NSF grants CMMI-1031332.}
}
\begin{document}

\title{Convergent sequences of sparse graphs: A large deviations approach}

\maketitle

\begin{abstract}

In this paper we introduce a new notion of convergence of
sparse graphs which we call Large Deviations or LD-convergence
and which is based on the theory of  large deviations. The
notion is introduced by ''decorating'' the nodes of the graph
with random uniform i.i.d. weights and constructing random
measures on $[0,1]$ and $[0,1]^2$ based on the decoration of
nodes and edges. A graph sequence is defined to be converging
if the corresponding sequence of random measures satisfies the
Large Deviations Principle with respect to the topology of weak
convergence on bounded measures on $[0,1]^d, d=1,2$. We then establish that
LD-convergence implies several previous notions of convergence,
namely so-called right-convergence, left-convergence, and
partition-convergence. The corresponding large deviation rate
function can be interpreted as the limit object of the sparse
graph sequence. In particular, we can express the limiting free
energies in terms of this limit object.


Finally, we establish several previously unknown relationships
between the formerly defined notions of convergence. In
particular, we show that partition-convergence does not imply
left or right-convergence, and that right-convergence does not
imply partition-convergence.
\end{abstract}

\newpage
\section{Introduction}
The theory of convergent graph sequences for dense graphs is by
now a well-developed subject, with many {\it a priori} distinct
notions of convergence proved to be equivalent.  For sparse
graph sequences (which in this paper we take to be sequences
with bounded maximal degree), much less is known. While several
of the notions defined in the
 context of dense graphs
were generalized to the sparse setting, and some other
interesting notions were introduced in the literature, the
equivalence between the different notions was either unknown or
known not to hold.  In some cases, it was not even known whether
one is stronger than the other or not.  In this paper, we
introduce a new natural notion of convergence for sparse
graphs, which we call Large Deviations Convergence, and  relate
it to other notions of convergence.  We hope that this new
notion will also ultimately allow us to relate many of the
previous notions to each other.

Let $\G_n$ be a sequence graphs such that the number of
vertices in $\G_n$ goes to infinity.  The question we address
here is the notion of convergence and limit of such a sequence.

For dense graphs, i.e., graphs for which the average degree
grows like the number of vertices, this question is by
now well understood
~\cite{BorgsChayesCountingGraphHom},\cite{BorgsChayesEtAlGraphLimitsI},\cite{BorgsChayesEtAlGraphLimitsII}.
These works introduced and showed the equivalence of
various notions of convergence: {\it
left-convergence}, defined in terms of homomorphisms from a
small graph $\F$ into $\G_n$; {\it right-convergence}, defined
in terms of homomorphisms from $\G_n$ into a weighted graph
$\mathbb H$ with strictly positive edge weights;  convergence
in terms of the so-called {\it cut metric}; and several other
notions, including convergence of the set of {\it quotients} of
the graphs $\G_n$, a notion which will play a role in this
paper as well. The existence and properties of the limit object
were established in \cite{LovaszSzegedy}, and its uniqueness
was proved in \cite{borgs2010moments}. Some lovely follow up
work on the dense case gave  alternative proofs in terms of
exchangeable random variables~\cite{diaconis2007graph}, and
provided applications using large
deviations~\cite{ChatterjeeVaradhan}.

For sparse graph sequences much less is known. In
\cite{BorgsChayesKahnLovasz}, \emph{left-convergence} was
defined by requiring that the limit
\begin{equation}
\label{L-convergent}
t(\F)=\lim_{n\to \infty}\frac 1{|V(\G_n)|} \hom(\F,\G_n)
\end{equation}
exists for every connected, simple graph $\F$, where
$\hom(\F,\G)$ is used to denote the number of homomorphisms
from $\F$ into  $\G$. Here and in the rest of this paper
$V(\G)$ denotes the
vertex set of $\G$. Also $E(\G)$ denotes the edge set of $\G$
and for $u,v\in V(\G)$ we write $u\sim v$ if $(u,v)\in E(\G)$.
It is easy to see that for sequences of graphs with bounded
degrees, this notion is
equivalent to an earlier
notion of convergence, the notion of \emph{local convergence}
introduced by Benjamini and Schramm~\cite{benjamini_schramm}.

To define right-convergence, one again considers homomorphisms,
but now from $\G_n$ into a small ``target'' weighted graph
$\Hg$.  Each homomorphism is equipped with the weight induced
by $\Hg$.
Since the total weight of all homomorphisms denoted by
$\hom(\G_n,\Hg)$ is at most exponential in $|V(\G_n)|$,  it is
natural to consider the sequence
\begin{align}\label{eq:rightlimit}
f(\G_n,\Hg)=-\frac 1{|V(\G_n)|}\log \hom(\G_n,\Hg),
\end{align}
which in the terminology of statistical physics is the sequence
of free energies. In~\cite{BorgsChayesKahnLovasz} a sequence
$\G_n$  was defined to be right-convergent with respect to a
given graph $\Hg$ if the free energies $f(\G_n,\Hg)$ converge
as $n\to\infty$. It was shown  that a sequence $\G_n$ that is
right-convergent on all simple graphs $\Hg$
is also left-convergent.  But the converse was  shown not to be
true. The counterexample is simple: Let $C_n$ be the cycle on
$n$ nodes, and let $\Hg$ be a bipartite graph.  Then
$\hom(C_n,\Hg)=0$ for odd $n$, while $\hom(C_n,\Hg)\geq 1$ for
even $n$.

This example, however, relies crucially on the fact that $\Hg$
encodes so-called \emph{ hard-core} constraints, meaning that
some of the maps $\phi:V(\G_n)\to V(\Hg)$ do not contribute to
$\hom(\G_n,\Hg)$.  As we discuss in Section 4, such hard-core
constraints are not very natural in many respects, one of them
being that the existence or value of the limit of the sequence
\eqref{eq:rightlimit} can change if we remove a sub-linear
number of edges from $\G_n$ (in the case of $C_n$, just
removing a single edge makes the sequence convergent, as can be
seen by a simple sub-additivity argument).  Hard-core
constraints are also not very natural from a point of view of
applications in physics, where hard-core constraints  usually
represent an idealized limit like the zero-temperature limit,
which is a mathematical idealization of something that in
reality can never be realized.

We therefore propose to \emph{define} right-convergence by
restricting ourselves to so-called soft-core graphs, i.e.,
graphs $\Hg$ with  strictly positive edge weights. In other
words, we {define} $\G_n$ to be \emph{right-convergent} iff
the free energy defined in
 \eqref{eq:rightlimit} has a limit for
every soft-core graph $\Hg$.  This raises the question of how
to relate the existence of a limit for models with hard-core
constraints to those on soft-core graphs, a question we will
address in Section~\ref{subsection:right-convergence} below,
but we do not make this part of the definition of
right-convergence. Given our relaxed definition of the
right-convergence,  the question then remains whether
left-convergence possibly implies right-convergence. The answer
turns out to be still negative, and involves an example already
considered in \cite{BorgsChayesKahnLovasz}, see
Section~\ref{sec:left-not-right} below.

Next let us turn to the notion of partition-convergence of
sparse graphs, a notion which was introduced by Bollobas and
Riordan \cite{BollobasRiordanMetrics}, motivated by a similar
notion for dense graphs from
~\cite{BorgsChayesEtAlGraphLimitsI},
\cite{BorgsChayesEtAlGraphLimitsII}, where it was called
convergence of quotients. We start by introducing quotients for
sparse graphs. Fix a graph $\G$ and let
$\sigma=(V_1,\dots,V_k)$ be a partition of $V(\G)$ into
disjoint subsets (where some of the $V_i$'s may be empty). We
think of this partition as a (non-proper) coloring
$\sigma:V(\G)\rightarrow [k]$ with $V_i=\sigma^{-1}(i), 1\le
i\le k$. The partition $\sigma$ then induces a weighted graph
$\F=\G/\sigma$ on $[k]=\{1,\dots,k\}$ (called a $k$-quotient)
by setting
\begin{equation}
\label{k-quotient}
x_i(\sigma)=\frac  {|V_i|} {|V|}
\qquad\text{and}\qquad
X_{ij}(\sigma)=\frac {1} {|V|} \left|\{u\in V_i, v\in V_j\colon u\sim v\}\right|.
\end{equation}
Here $x(\sigma)=(x_i(\sigma),~1\le i\le k)$ and
$X(\sigma)=(X_{i,j}(\sigma),~1\le i,j\le k)$ are the set of
node and edge weights of $\F$, respectively. Then for a given
$k$, the set of all possible $k$-quotients
$(x(\sigma),X(\sigma))$ is a discrete subset ${\mathcal
S}_k(\G)\subset\R_+^{(k+1)k}$. Henceforth  $\R (\R_+)$
denotes the set of all (non-negative) reals, and $\Z
(\Z_+)$ denotes the set of all (non-negative) integers.
Motivated by the corresponding notion from dense graphs, one
might want to study whether the sequence of $k$-quotients
${\mathcal S}_k(\G_n)$ converges to a limiting set $\mathcal
S_k\subset\R_+^{(k+1)k}$ with respect to some appropriate
metric on the subsets of $\R_+^{(k+1)k}$. This leads to the
notion of partition-convergence, a notion introduced
in~\cite{BollobasRiordanMetrics}. Since it is not immediate
whether this notion of convergence implies, for example,
left-convergence (in fact, one of the results in this paper is
that it does not), Bollobas and
Riordan~\cite{BollobasRiordanMetrics} also introduced a
stronger notion of colored-neighborhood-convergence. This
notion was further studied by Hatami, Lov{\'a}sz and
Szegedi~\cite{HatamiLovaszSzegedy}. An implication of the
results of these
and earlier papers (details below) is that
colored-neighborhood-convergence is strictly stronger than the notion of both
partition- and left-convergence, but it is not known whether it
implies right-convergence.

We now discuss the main contribution of this paper: the
definition of convergent sparse graph sequences using the
formalism of large deviations theory. We define a graph
sequence to be Large Deviations (LD)-convergent if for every
$k$, the weighted factor graphs $\F=G/\sigma$ defined above
viewed as a vector $(x_i(\F), 1\le i\le k)$ and matrix
$(X_{i,j}(\F), 1\le i,j\le k)$, satisfy the large deviation
principle in $\R^k$ and $\R^{k\times k}$, when
the $k$-partition $\sigma$ is chosen uniformly at random.%
\footnote{Some necessary background on Large Deviations
Principle and weak convergence of measures will be provided in
Section~\ref{section:LDconvergence}.} Intuitively, the large
deviations rate associated with a given graph $\F$ provides the
limiting exponent for the number colorings $\sigma$ such that
the corresponding factor graphs $\G/\sigma$ is approximately
$\F$. It turns out that the large deviations rates provide
enough information to ''read off'' the limiting partition sets
$\mathcal{S}_k$. Those are obtained as partitions with finite
large deviations rate. Similarly, one can ''read off'' the
limits of free energies (\ref{eq:rightlimit}). As a
consequence, the LD-convergence implies both the partition and
right-convergence.

The deficiency of the definition  above is that it requires
that the large deviations principle holds for a infinite
collection of probability measures associated with the choice
of $k$. It turns out that there is a more elegant unifying way
to introduce  LD-convergence  by defining just one large
deviations rate, but on the space of random measures rather
than the space of random weighted graphs $\G_n/\sigma$. This is
done as follows. Given a sequence of sparse graphs $\G_n$,
suppose the nodes of each graph are equipped with values
$\left(\sigma(u),  u\in V(\G)\right)$ chosen independently
uniformly at random from $[0,1]$. We may think of these values
as real valued ''colors''. From these values we construct a
one-dimensional measure $\rho_n=\sum_{u\in V(\G)}
|V(\G_n)|^{-1}\delta(\sigma(u))$ on $[0,1]$ and a
two-dimensional measure $\mu_n=\sum_{u,v\colon u\sim v}
|V(\G_n)|^{-1}\delta(\sigma(u),\sigma(v))$ on $[0,1]^2$. Here
$\delta(x)$ is a measure with unit mass at $x$ and zero
elsewhere. As such we obtain a sequence of measures
$(\rho_n,\mu_n)$. We define the graph sequence to be
LD-convergent if this sequence of random measures satisfy the
Large Deviations Principle on the space of measures on
$[0,1]^d, d=1,2$ equipped with the weak convergence topology.
One of our first result is establishing the equivalence of two
definitions of the LD-convergence. To distinguish the two, the
former mode of convergence is referred to as
$k$-LD-convergence, and the latter simply as LD-convergence.

Our most important result concerning the new definition of
graph convergence is that LD-convergence implies
right (and therefore left) as well as partition-convergence. We
conjecture that it also implies
colored-neighborhood-convergence but we do not have a proof at
this time. Finally, we conjecture that LD-convergence holds for
a sequence of random $D$-regular graphs with high probability
(w.h.p.), but at the present stage we are very far from proving
this conjecture and we discuss important implications to the
theory of spin glasses should this convergence be established.

Let us finally return to a question already alluded to when we
defined right-convergence. From several points of view, it
seems natural to consider two sequence $\G_n$ and $\tilde \G_n$
on the same vertex set $V_n=V(\G_n)=V(\tilde\G_n)$ to be equivalent
if the edge sets $E(\G_n)$ and $E(\tilde\G_n)$ differ on a set
of $o(|V_n|)$ edges. One natural condition one might require
of all definitions of convergence is the condition that
convergence of $\G_n$ implies that of $\tilde\G_n$ and vice
versa, whenever $\G_n$ and $\tilde\G_n$ are equivalent. We
show that this holds for all notions of convergence considered
in the paper, namely left-convergence, our modified version of
right-convergence, partition-convergence,
colored-neighborhood-convergence, and LD-convergence.  We also
show that for all right-convergent sequences $\G_n$, one can
find an equivalent sequence $\tilde\G_n$ such that on
$\tilde\G_n$, the limiting free energy
$f(\Hg)=\lim_{n\to\infty} f(\tilde\G_n,\Hg)$ exists even when
$\Hg$ contains hard-core constraints, see
Theorem~\ref{theorem:RightConvergenceEquivalence} below.  Our
alternative definition of right-convergence can therefore be
reexpressed by the condition that there exists a sequence
$\tilde\G_n$ equivalent to $\G_n$ such that the free
energies $f(\tilde\G_n,\Hg)$ converge for all weighted graphs
$\Hg$.

The organization of this paper is as follows. In the next
section we introduce notations and definitions and provide the
necessary background on the large deviations theory. In
Section~\ref{section:LDconvergence} we introduce
LD-convergence, establish some basic properties and consider
two examples of LD-convergent graph sequences. Other notions of
convergence are discussed in
Section~\ref{section:OtherConvergence}. In the same section we
prove Theorem~\ref{theorem:RightConvergenceEquivalence}
relating free energy limits with respect to hard-core and soft
models, discussed above. The relationship between different
notions of convergence is considered in
Section~\ref{section:ConvergenceRelations}. The main result of
this section is Theorem~\ref{theorem:ConvergenceRelations},
which establishes relationships between different notions of
convergence. Figures~\ref{figure:ConvergenceRelations} and
\ref{figure:ConvergenceRelationsColorNeighb} are provided to
illustrate the relations between the notions of convergence, both
those known earlier and those established in
this paper. In the last
Section~\ref{sec:discussion} we compare
the expressions for the limiting free energy to those in the dense
case (pointing out that the
large-deviations rate function plays a role quite similar to the role
the limiting graphon played in the dense case) and
discuss  some open
questions.

\section{Notations and definitions}
\label{sec:notation}

For  the convenience of the reader, we repeat
some of the notations already
used in the introduction. We consider in this paper finite
simple undirected graphs $\G$ with node and edge sets denoted
by $V(\G)$ and $E(\G)$ respectively. We write $u\sim v$ for
nodes $u$ and $v$ when $(u,v)\in E(\G)$. For every node $u\in
V(\G)$, $\mathcal{N}(u)$ denotes the set of neighbors of $u$,
namely all $v\in V(\G)$ such that $u\sim v$. Sometimes we will
write $\mathcal{N}_{\G}(u)$  in order to emphasize the
underlying graph. The number $\Delta_\G(u)$ of vertices in
$\mathcal{N}_{\G}(u)$ is called the degree of $u$ in $\G$. We
use $\Delta_\G$ to denote the maximum degree $\max_u
\Delta_G(u)$.

A path of length $r$ is a sequence of nodes $u_{0},\ldots,u_r$
such that $u_i\sim u_{i+1}$ for all $i={0},1,2,\ldots,r-1$. The
distance between nodes $u$ and $v$ is the length of the
shortest path $u_0,\ldots,u_r$ with $u_0=u$ and $u_r=v$. The
distance is assumed to be infinite if $u$ and $v$ belong to
different connected components of $\G$. Given $u$ and $r$, let
$B(u,r)=B_{\G}(u,r)$ be the subgraph of $\G$ induced by nodes
with distance at most $r$ from $u$. Given a set of nodes
$A\subset V$ and $r\ge 1$, $B(A,r)$ is the graph induced by the
set of nodes $v$ with distance at most $r$ from some node $u\in
A$.

Given two graphs $\G,\Hg$ a map $\sigma:V(\G)\rightarrow
V(\Hg)$ is a graph homomorphism if   $(u,v)\in E(\G)$ implies
$(\sigma(u),\sigma(v))\in E(\Hg)$ for every $u,v\in V(\G)$.
Namely, $\sigma$ maps edges into edges. $\sigma:\G_1\rightarrow
\G_2$ is called a graph isomorphism if it is a bijection and
$(u,v)\in E(\G_1)$ if and only if $(\sigma (u), \sigma(v))\in
E(\G_2)$, i.e., $\sigma$ is an isomorphism which maps edges
into edges, and non-edges into non-edges. Two graphs are called
isomorphic if there is a graph isomorphism mapping them into
each other.
 Two rooted graphs $(\G_1,x)$ and $(\G_2,y)$ are called isomorphic
if there is an isomorphism $\sigma:\G_1\rightarrow \G_2$ such
that $\sigma(x)=y$.

The edit distance between two graphs $\G$ and $\tilde\G$ with
the same number of vertices is defined as
\[
\delta_{\text{edit}}(\G,\tilde\G)=
{\min}_{\G'}\Bigl(|E(\G')\setminus E(\tilde\G)| + |E(\tilde\G)\setminus E(\G')|\Bigr)
\]
where the {minimum} goes over all graph $\G'$ isomorphic to
$\G$.

Given two graphs $\G,\Hg$, we use $\Hom(\G,\Hg)$ to denote the
set of all homomorphisms from $\G$ to $\Hg$, and
$\hom(\G,\Hg)=|\Hom(\G,\Hg)|$ to denote the number of
homomorphisms from $\G$ to $\Hg$.  If $\Hg$ is weighted, with
node weights given by a vector $\left(\alpha_i, i\in
V(\Hg)\right)$ and edge weights given by a symmetric matrix
$\left(A_{ij}, (i,j)\in E(\Hg)\right)$ (which we denote by
$\alpha=\alpha(\Hg)$ and $A=A(\Hg)$ respectively), then
$\hom(\G,\Hg)$ is defined by
\begin{align}\label{eq:WeightedHom}
\hom(\G,\Hg)=\sum_{\sigma:V(\G)\rightarrow V(\Hg)}\prod_{u\in V(\G)}\alpha_{\sigma(u)}\prod_{(u,v)\in E(\G)}A_{\sigma(u),\sigma(v)}.
\end{align}
Clearly, if $\alpha$ is a vector of ones and $A$ is a matrix
consisting of zeros and ones with zeros on the diagonal, then the
definition reduces to the unweighted case where $A$ denotes the
adjacency matrix of a simple undirected graph $\Hg$.

A graph sequence $\G_n=(V(\G_n),E(\G_n)), n\ge 1$ is defined to
be sparse if $\Delta_{\G_n}\le D$ for some finite $D$ for all
$n$. When dealing with graph sequences we  write $V_n$ and
$E_n$ instead of $V(\G_n)$ and $E(\G_n)$, respectively. Two
graph sequences $\G_n$ and $\tilde\G_n$ are defined to be
equivalent, in which case we write $\G_n\sim \tilde\G_n$, if
$\tilde\G_n$ has the same number of nodes as $\G_n$ and
$\delta_{\text{edit}}(\G_n,\tilde\G_n)=o(|V(\G_n)|)$ as
$n\rightarrow\infty$. Observe that $\sim$ defines an
equivalency relationship on the set of sequences of graphs.

Next we review some concepts from large deviations theory. Given a
metric space $S$ equipped with metric $d$, a sequence of
probability measures $\pr_n$ on Borel sets of $S$ is said to
satisfy the Large Deviations Principle (LDP) with rate
$\theta_n>0, \lim_n\theta_n=\infty$, if there exists a lower
semi-continuous function $I:S\rightarrow \R_+\cup \{\infty\}$
such that for every (Borel) set $A\subset S$
\begin{align}\label{eq:LDprinciple}
-\inf_{x\in A^o}I(x)\le \liminf_n {\log\pr_{n}(A^o)\over \theta_n}\le \limsup_n {\log \pr_{n}(\bar A)\over \theta_n} \le -\inf_{x\in \bar A}I(x),
\end{align}
where $A^o$ and $\bar A$ denote the interior and the closure of
the set $A$, respectively. In this case, we say that $(\pr_n)$
obeys the LDP with rate function $I$. Typically $\theta_n=n$
is used in most cases, but in our case we will be considering
the normalization $\theta_n=|V(\G_n)|$ corresponding to some
sequence of graphs $\G_n$, and it is convenient not to assume
that $n$ is the number of nodes in $\G_n$.

The definition above immediately implies that that the rate
function $I$ is uniquely recovered as
\begin{align}\label{eq:Identity1}
I(x)=-\lim_{\epsilon\rightarrow 0}\liminf_n {\log\pr_{n}(B(x,\epsilon))\over \theta_n}=
-\lim_{\epsilon\rightarrow 0}\limsup_n {\log\pr_{n}(B(x,\epsilon))\over \theta_n},
\end{align}
for every $x\in S$, where $B(x,\epsilon)=\{y\in S:d(x,y)\le
\epsilon\}$. Indeed, from lower semi-continuity we have
\begin{align}\label{eq:Ikepsilon}
I(x)=\lim_{\epsilon\rightarrow 0}\inf_{y\in B^o(x,\epsilon)}I(y)=\lim_{\epsilon\rightarrow 0}\inf_{y\in \bar B(x,\epsilon)}I(y),
\end{align}
from which the claim follows. In fact the existence and the
equality of double limits in (\ref{eq:Identity1}) is also
sufficient for the LDP to hold when $S$ is compact. We provide
a proof here for completeness.

\begin{prop}\label{prop:LDReverse}
Given a sequence of probability measures $\pr_n$ on a compact
metric space $S$, and given a sequence $\theta_n>0,
\lim_n\theta_n=\infty$, suppose the limits in the second and
third term in (\ref{eq:Identity1}) exist and are equal for
every $x$. Let $I(x)$ be defined as the negative of these terms
for every $x\in S$. Then the LDP holds with rate function $I$
and normalization $\theta_n$.
\end{prop}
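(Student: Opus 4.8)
The plan is to show that the hypothesized pointwise conditions on the balls $B(x,\epsilon)$ propagate to arbitrary Borel sets, using compactness of $S$ crucially. Define $I(x)$ as the negative of the common value of the second and third limits in \eqref{eq:Identity1}. First I would check that $I$ is lower semi-continuous: given $x_k\to x$, I would use that any ball $B(x,\epsilon)$ eventually contains $B(x_k,\epsilon/2)$, so $\liminf_k I(x_k)\ge -\liminf_n \theta_n^{-1}\log\pr_n(B(x,\epsilon))$ up to the $\epsilon$-error, and then let $\epsilon\to 0$; so $I$ is l.s.c., and in particular it attains its infimum on the compact set $\bar A$.

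For the upper bound in \eqref{eq:LDprinciple}, fix a closed set $A=\bar A$ and let $\alpha$ be any number with $\alpha<\inf_{x\in A}I(x)$; I want $\limsup_n \theta_n^{-1}\log\pr_n(A)\le -\alpha$. For each $x\in A$ there is, by the definition of $I$ and the existence of the limit, an $\epsilon_x>0$ and $N_x$ with $\pr_n(B(x,\epsilon_x))\le e^{-\alpha\theta_n}$ for all $n\ge N_x$ (here I use that $I(x)>\alpha$ and the $\limsup$ form of \eqref{eq:Identity1}). The open balls $B^o(x,\epsilon_x)$, $x\in A$, cover the compact set $A$, so finitely many $B(x_1,\epsilon_{x_1}),\dots,B(x_m,\epsilon_{x_m})$ cover $A$; then for $n\ge \max_i N_{x_i}$ we get $\pr_n(A)\le \sum_{i=1}^m \pr_n(B(x_i,\epsilon_{x_i}))\le m\, e^{-\alpha\theta_n}$, so $\limsup_n \theta_n^{-1}\log\pr_n(A)\le -\alpha$. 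Letting $\alpha\uparrow \inf_{x\in A}I(x)$ gives the upper bound.

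For the lower bound, fix an open set $A=A^o$ and a point $x\in A$; it suffices to show $\liminf_n \theta_n^{-1}\log\pr_n(A)\ge -I(x)$, and then take the supremum over $x\in A$. Since $A$ is open, $B(x,\epsilon)\subset A$ for small $\epsilon$, hence $\pr_n(A)\ge \pr_n(B(x,\epsilon))$, so $\liminf_n \theta_n^{-1}\log\pr_n(A)\ge \liminf_n \theta_n^{-1}\log\pr_n(B(x,\epsilon))$; letting $\epsilon\to 0$ and using the $\liminf$ form of \eqref{eq:Identity1} gives $\liminf_n \theta_n^{-1}\log\pr_n(A)\ge -I(x)$, as wanted. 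Combining the two bounds with the trivial monotonicity $\pr_n(A^o)\le\pr_n(\bar A)$ yields \eqref{eq:LDprinciple}, and the rate function is $I$ by construction.

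The only genuine subtlety — the step I would be most careful about — is the compactness-based covering argument in the upper bound: it is exactly here that compactness of $S$ is used (the statement is false in general for non-compact $S$, where one typically only gets a weak LDP), and one must make sure the radii $\epsilon_x$ and thresholds $N_x$ are chosen uniformly enough that a finite subcover suffices. A minor technical point is whether to work with closed balls $\bar B(x,\epsilon)$ or open balls $B^o(x,\epsilon)$ when applying the hypothesis versus when invoking the open cover; switching between them costs only a harmless factor of going from radius $\epsilon$ to $2\epsilon$, which vanishes in the $\epsilon\to 0$ limit, so this does not affect the conclusion. Everything else is routine manipulation of $\liminf$/$\limsup$ and the definition of the LDP.
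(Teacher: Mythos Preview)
Your proof is correct. The lower-semicontinuity argument and the lower bound are essentially identical to the paper's. The upper bound, however, is handled differently.

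You use the direct (and more standard) covering argument: for each $x\in A$ pick a radius $\epsilon_x$ and a threshold $N_x$ so that $\pr_n(B(x,\epsilon_x))\le e^{-\alpha\theta_n}$ for $n\ge N_x$, extract a finite subcover by compactness, and apply a union bound. The paper instead avoids $x$-dependent radii: it covers $A$ by finitely many balls of a fixed radius $1/m$ centered at points $x_1,\dots,x_{N(m)}\in A$, picks for each $n$ the ball of largest probability, uses pigeonhole to find an index $i(m)$ that is optimal along an infinite subsequence, and then lets $m\to\infty$ and passes to a limit point $x\in A$ of the centers $x_{i(m)}$, applying the hypothesis \eqref{eq:Identity1} only at this single point $x$. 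Your route is shorter and more transparent; the paper's route has the (minor) advantage that it only ever invokes \eqref{eq:Identity1} at one point of $A$ rather than at every point, but this does not really buy anything here since the hypothesis is assumed for all $x$ anyway. Both arguments use compactness in an essential way, exactly as you flag.
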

\begin{proof}
We first establish that $I$ is lower semi-continuous. Fix
$x,x_m\in S$ such that $x_m\rightarrow x$. For every
$\epsilon$, find $m_0$ such that $d(x,x_m)\le \epsilon/2$ for
$m\ge m_0$. We have for all $m\ge m_0$,
\begin{align*}
-I(x_m)\le \limsup_n{1\over \theta_n}\log\pr_{n}\left(B(x_m,\epsilon/2)\right)\le \limsup_n{1\over \theta_n}\log\pr_{n}\left(B(x,\epsilon)\right),
\end{align*}
implying
\begin{align*}
\limsup_m (-I(x_m))\le \lim_{\epsilon\rightarrow 0}\limsup_n{1\over \theta_n}\log\pr_{n}\left(B(x,\epsilon)\right)=-I(x).
\end{align*}
Therefore $I$ is lower semi-continuous.

It remains to verify (\ref{eq:LDprinciple}). Fix an arbitrary
closed (and therefore compact) $A\subset S$, and let
$\delta>0$.  Then
\begin{align*}
\limsup_n {1\over \theta_n}\log\pr_{n}(A)\le
{1\over \theta_n}\log\pr_{n}(A) +\delta
\end{align*}
for an infinite number of values for $n$.  Let $N_0$ be the set
of $n$ for which this holds. Further, given $m\in \Z_+$, we can
find $x_1,\ldots,x_{N(m)} \in A$ such that $A\subset \cup_{i\le
N(m)}B(x_i,1/m)$. Let
$i(m,n)\le N(m)$
be the index corresponding to a largest value of
$\pr_{n}(B(x_i,1/m))$. There exists an $i(m)$ such that
$i(m,n)=i(m)$ for an infinite number of values $n\in N_0$.
Denote the set of $n$ for which this holds by $N_m$, and assume
without loss of generality that $\theta_n^{-1}\log N(m)\leq
\delta$ for all $n\in N_m$.  For such $n$, we then have
\begin{align*}
\pr_{n}(A)\le \sum_{i\le N(m)}\pr_{n}(B(x_i,1/m))\le N(m)\pr_{n}(B(x_{i(m)},1/m))
\end{align*}
and hence
\begin{align*}
\limsup_n {1\over \theta_n}\log\pr_{n}(A)\le 2\delta+
{1\over \theta_n}\log\pr_{n}(B(x_{i(m)},1/m)).
\end{align*}

Find any limit point $x\in A$ of $x_{i(m)}$ as
$m\rightarrow\infty$. Fix $\epsilon>0$. Find $m_1$ such that
$B(x_{i(m_1)},1/m_1)\subset B(x,\epsilon)$, and hence
\begin{align*}
\limsup_n {1\over \theta_n}\log\pr_{n}(A)\le 2\delta+
{1\over \theta_n}\log\pr_{n}(B(x,\epsilon))
\end{align*}
whenever $n\in N_{m_1}$.  Since $N_{m_1}$ contains infinitely
many $n\in \Z_+$, we conclude that
\begin{align*}
\limsup_n {1\over \theta_n}\log\pr_{n}(A)\le 2\delta+
\limsup_n {1\over \theta_n}\log\pr_{n}(B(x,\epsilon)).
\end{align*}
Since this holds for every $\delta$ and  $\epsilon$, we may
apply (\ref{eq:Identity1}) to obtain
\begin{align*}
\limsup_n {1\over \theta_n}\log\pr_{n}(A)\le -I(x)\le -\inf_{y\in A}I(y),
\end{align*}
thus verifying the upper bound in
(\ref{eq:LDprinciple}).

For the lower bound, suppose $A\subset S$ is open. Fix
$\epsilon>0$ and find $x\in A$  such that $I(x)\le \inf_{y\in
A}I(y)+\epsilon$. Find $\delta$ such that $B(x,\delta)\subset
A$. We have
\begin{align*}
\liminf_n {1\over \theta_n}\log\pr_{n}(A)&\ge \liminf_n {1\over \theta_n}\log\pr_{n}\left(B(x,\delta)\right)\\
&\ge -I(x)\\
&\ge -\inf_{y\in A}I(y)-\epsilon.
\end{align*}
Since $\epsilon$ is arbitrary, this proves the lower bound.
\end{proof}

We finish this section with some basic notions of weak
convergence of measures. Let $\M^d$ denote the space of Borel
measures  on $[0,1]^d$ bounded by some constant $D>0$. The
space $\M^d$ is equipped with the topology of weak convergence
which can be realized using the so-called Prokhorov metric
denoted by $d$ defined as follows (see~\cite{Billingsley} for
details). For any two measures $\mu,\nu\in \M^d$, their
distance $d(\mu,\nu)$ is the smallest $\tau$ such that for every
measurable $A\subset [0,1]^d$ we have $\mu(A)\le
\nu(A^\tau)+\tau$ and $\nu(A)\le \mu(A^\tau)+\tau$. Here
$A^\tau$ is the set of points with distance at most $\tau$ from
$A$ in the $\|\cdot\|_\infty$ norm on $[0,1]^d$ (although the
choice of norm on $[0,1]^d$ is not relevant). It is known that
that the metric space $\M^d$ is compact with respect to the
topology of weak convergence. We will focus in this paper on
the product $\M^1\times\M^2$ which we equip with the following
metric (which we also denote by $d$ with some abuse of
notation): for every two pairs of measures
$(\rho_j,\mu_j)\in\M^1\times\M^2,~j=1,2$, the distance is
$d\left((\rho_1,\mu_1),(\rho_2,\mu_2)\right)=\max\left(d(\rho_1,\rho_2),d(\mu_1,\mu_2)\right)$,
where $d$ is the metric on $\M^1$ and $\M^2$.

\section{LD-convergence of sparse graphs}\label{section:LDconvergence}
\subsection{Definition and basic properties}\label{subsection:DefProperties}
Given a graph $\G=(V,E)$ and any mapping $\sigma:V\rightarrow
[0,1]$, construct $(\rho(\sigma),\mu(\sigma))\in \M^1\times
\M^2$ as follows. For every $u\in V$ we put mass $1/|V|$ on
$\sigma(u)\in [0,1]$. This defines $|V|$ points on $[0,1]$ with
the total mass $1$. The resulting measure is denoted by
$\rho(\sigma)\in\M^1$. Similarly, we put mass $1/|V|$ on
$(\sigma(u),\sigma(v))\in [0,1]^2$ and
$(\sigma(v),\sigma(u))\in [0,1]^2$ for every edge $(u,v)\in E$.
The resulting measure is denoted by $\mu(\sigma)\in \M^2$.
Formally
\begin{equation}
\label{rho-sigma}
\begin{aligned}
\rho(\sigma)&=\sum_{u\in V}|V|^{-1}\delta_{\sigma(u)}, \\
\mu(\sigma)&=\sum_{(u,v)\in E}|V|^{-1}\delta_{\sigma(u),\sigma(v)},
\end{aligned}
\end{equation}
where the sum goes over oriented edges, i.e., ordered pairs
$(u,v)$ such that  $u$ is connected to $v$ by an edge in
$E$, and $\delta_x$ denotes a unit mass measure on $x\in\R^d$.
For example, consider a graph with $4$ nodes $1,2,3,4$ and edges
$(1,2),(1,3),(2,3),(3,4)$. Suppose a realization of $\sigma$ is
$\sigma(1)=.4,\sigma(2)=.8,\sigma(3)=.15,\sigma(4)=.5$, as
shown on Figure~\ref{figure:graph}. The corresponding
$\rho(\sigma)$ and $\mu(\sigma)$ are illustrated on
Figure~\ref{figure:rhomu}.

\begin{figure}
\begin{center}
\scalebox{.4}{\ingraph{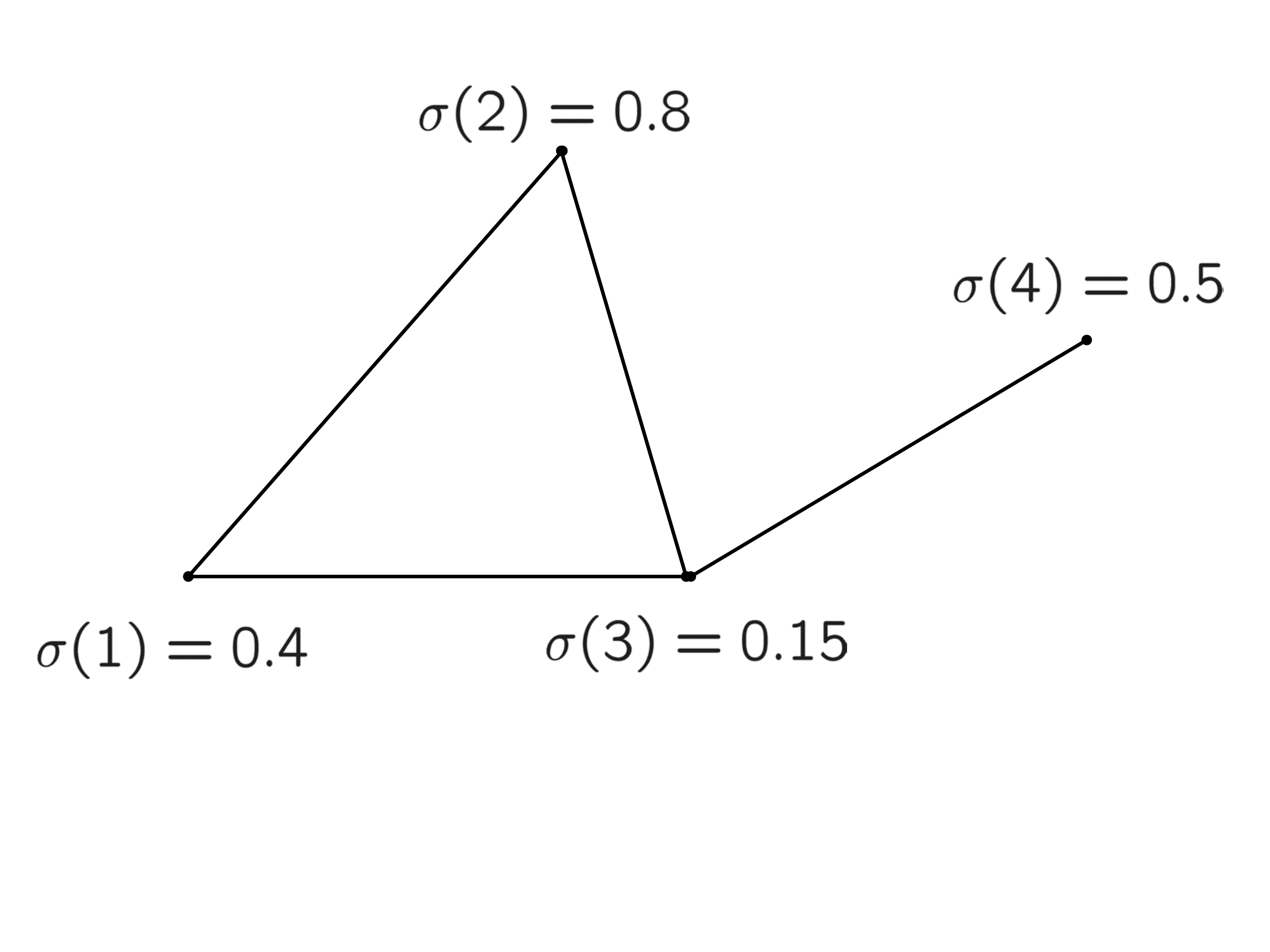}}
\vskip -48pt
\caption{A four node graph decorated with real valued colors}\label{figure:graph}
\end{center}
\end{figure}

\begin{figure}
\begin{center}
\scalebox{.5}{\ingraph{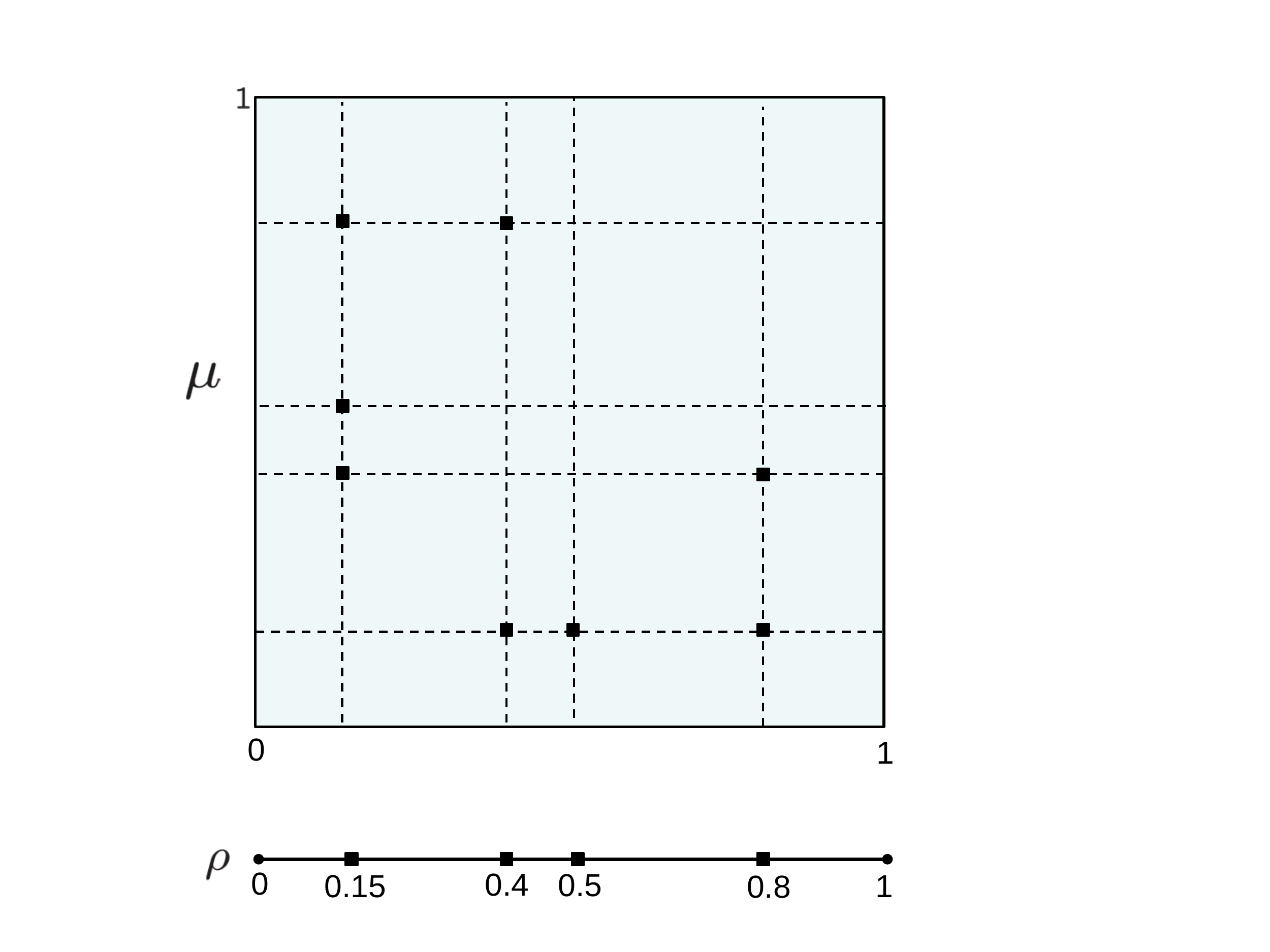}}
\caption{Measures $\rho,\mu$ associated with a four node graph with four edges}\label{figure:rhomu}
\end{center}
\end{figure}

Now given a sparse graph sequence $\G_n=(V_n,E_n)$ with a
uniform degree bound $\max_n \Delta_{\G_n}\le D$, let
$\sigma_n:V_n\rightarrow [0,1]$ be chosen  independently
uniformly at random with respect to the nodes $u\in V_{n}$.
Then we obtain a random element
$(\rho(\sigma_n),\mu(\sigma_n))\in \M^1\times \M^2$. We will
use notations $\rho_n$ and $\mu_n$ instead of $\rho(\sigma_n)$
and $\mu(\sigma_n)$. Observe that $\rho_n([0,1])=1$ and $0\le
\mu_n([0,1]^2)\le D$. In particular, $\mu_n([0,1]^2)=0$ if and
only if $\G_n$ is an empty graph. We now introduce the notion
of LD-convergence. In this definition and the
Definition~\ref{definition:GraphConvergenceLDk} below we assume
$\theta_n=|V_n|$.

\begin{Defi}\label{definition:GraphConvergenceLDGeneral}
A graph sequence $\G_n$ is defined to be LD-convergent if the
sequence $(\rho(\sigma_n),\mu(\sigma_n))$ satisfies LDP in the
metric space $\M^1\times \M^2$.
\end{Defi}

Given a sparse graph sequence $\G_n$ with degree bounded by $D$
and a positive integer $k$, let $\sigma_n:V_n\rightarrow [k]$
be chosen uniformly at random. Consider the corresponding
random weighted  $k$-quotients $\F_n=\G_n/\sigma_n$ defined via
(\ref{k-quotient}) with random node weights
$x(\sigma_n)=(x_i(\sigma_n), 1\le i\le k)$ and random edge
weights $X(\sigma_n)=(X_{ij}(\sigma_n), 1\le i,j\le k)$. Note
that $\sum_i x_i(\sigma_n)=1$ and $\sum_{i,j}
x_{ij}(\sigma_n)\le D$. Assuming without  loss of generality
that $D\ge 1$, it it will be convenient to view
$(x(\sigma_n),X(\sigma_n))$ as a $(k+1)\times k$ matrix with
elements bounded by $D$, namely an element in $\Dk$. We
consider $\Dk$ equipped with the $\Lnorm_\infty$ norm. That is the
distance between $(x,X)$ and $(y,Y)$ is  the maximum of $\max_{1\le
i\le k}|x_i-y_i|$ and $\max_{1\le i,j\le k}|X_{i,j}-Y_{i,j}|$.

\begin{Defi}\label{definition:GraphConvergenceLDk}
A graph sequence $\G_n$ is defined to be $k$-LD-convergent if
for every $k$ the sequence $(x(\sigma_n),X(\sigma_n))$
satisfies the LDP in $\Dk$ with some rate function
$I_k:\Dk\rightarrow \R_+\cup \{\infty\}$.
\end{Defi}

The intuition behind this definition is as follows. Given
$(x,X)$ suppose there exists a $k$-coloring of a graph $G_n$
such that the corresponding $k$-quotient weighted graph is
''approximately'' equal to $(x,X)$ to within some additive
error $\epsilon$. Then in fact there are exponentially in
$|V_n|$ many $k$-colorings of $\G_n$ which achieve ''nearly''
the same quotient graph up to say an $2\epsilon$ additive error
(by arbitrarily recoloring $\epsilon |V_n|/D$ many nodes). In
this case  $k$-LD-convergence of the graph sequence $\G_n$
means that the exponent of the number of colorings achieving
the
quotient $(x,X)$ is well defined and is given by $\log
k-I(x,X)$. In other words, the total number of $k$-coloring
achieving the quotient $(x,X)$ is approximately
$\exp\left(|V_n|(\log k-I(x,X))\right)$. In  case $(x,X)$ is
not ''nearly'' achievable by any coloring, we have
$I(x,X)=\infty$, meaning that there are
 simply no coloring
achieving the target quotient $(x,X)$.

While
Definition~\ref{definition:GraphConvergenceLDk} might be more
intuitive than
Definition~\ref{definition:GraphConvergenceLDGeneral}, the
latter is more powerful in the sense that it does not require
a
''for all $k$'' condition, namely it is introduced through
having the LDP on \emph{one} as opposed to infinite (for every
$k$) sequence of probability spaces. It is also more powerful,
in the sense that the measure $(\rho_n,\sigma_n)$ completely
defines the underlying graph, whereas this is not the case for
the $k$-quotients. Since the space $\Dk$ being a subset of a
Euclidian space is a much ''smaller'' than the space of
measures $\M^d$, it is  not too surprising that the
LD-convergence implies the $k$-LD-convergence. But it is perhaps
surprising  that the converse is true.

\begin{theorem}\label{theorem:LDconvergenceEquivalence}
A graph sequence $\G_n$ is LD-convergent if and only if it is
$k$-LD-convergent for every $k$.
\end{theorem}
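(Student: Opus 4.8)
The plan is to establish the two implications separately, with the easy direction first. For the forward direction (LD-convergence $\Rightarrow$ $k$-LD-convergence), I would exhibit, for each fixed $k$, a continuous map $\Phi_k:\M^1\times\M^2\to\Dk$ such that the random quotient $(x(\sigma_n),X(\sigma_n))$ obtained from a uniform $[k]$-coloring has essentially the same law as $\Phi_k$ applied to the random measure $(\rho(\tilde\sigma_n),\mu(\tilde\sigma_n))$ built from a uniform $[0,1]$-coloring, after identifying $[k]$ with the partition of $[0,1]$ into intervals $\big[\tfrac{i-1}{k},\tfrac{i}{k}\big)$. Concretely, $\Phi_k$ sends $(\rho,\mu)$ to the vector $\big(\rho([\tfrac{i-1}{k},\tfrac ik))\big)_i$ and the matrix $\big(\mu([\tfrac{i-1}{k},\tfrac ik)\times[\tfrac{j-1}{k},\tfrac jk))\big)_{i,j}$. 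Because these box-evaluation functionals are \emph{not} continuous on $\M^d$ in the weak topology (mass can sit exactly on a boundary hyperplane), I would instead perturb the partition: choose the interval endpoints to be $k$ generic points, or better, randomize $\sigma_n$ as $\lceil k\tilde\sigma_n\rceil$ and note that with the continuous cutoffs the discontinuity set has $(\rho,\mu)$-measure zero for the limiting measures of interest; then apply the contraction principle (a standard consequence of the LDP that I am allowed to invoke implicitly via Proposition~\ref{prop:LDReverse}) to transport the LDP through $\Phi_k$. This gives $I_k(x,X)=\inf\{I(\rho,\mu):\Phi_k(\rho,\mu)=(x,X)\}$.

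For the converse (which the authors themselves flag as the surprising direction), the key observation is that $\M^1\times\M^2$ is a \emph{compact} metric space, so by Proposition~\ref{prop:LDReverse} it suffices to show that for every $(\rho,\mu)\in\M^1\times\M^2$ the double limit
\[
I(\rho,\mu):=-\lim_{\epsilon\to 0}\liminf_n \frac{1}{|V_n|}\log\pr_n\big(B((\rho,\mu),\epsilon)\big)
\]
exists and equals the corresponding $\limsup$. The heart of the argument is a quantitative comparison between the Prokhorov ball $B((\rho,\mu),\epsilon)$ and the event that the associated $k$-quotient lies in a cube around $\Phi_k(\rho,\mu)$: for $k$ large (depending on $\epsilon$), membership of $(x(\sigma_n),X(\sigma_n))$ in an $O(1/k)$-cube around $\Phi_k(\rho,\mu)$ forces $(\rho(\sigma_n),\mu(\sigma_n))$ into $B((\rho,\mu),\epsilon)$ and conversely a slightly smaller cube is implied by it — this is exactly the statement that the weak topology on measures on $[0,1]^d$ is generated by a refining sequence of finite partitions into dyadic-type boxes. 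I would make this sandwich precise: there are functions $k\mapsto\epsilon(k)\to 0$ and, for each $\epsilon$, a choice $k(\epsilon)\to\infty$, so that
\[
\big\{ (x(\sigma_n),X(\sigma_n))\in Q_{k(\epsilon)}(\rho,\mu)\big\}\ \subseteq\ B((\rho,\mu),\epsilon)\ \subseteq\ \big\{ (x(\sigma_n),X(\sigma_n))\in Q'_{k}(\rho,\mu)\big\},
\]
where $Q_k$, $Q'_k$ are cubes in $\Dk$ of side $\Theta(1/k)$. Applying the $k$-LDP (via the identity (\ref{eq:Identity1}) for the rate function $I_k$) to both sides and letting $\epsilon\to 0$ pins down the double limit for $(\rho,\mu)$, and one reads off $I(\rho,\mu)=\lim_{k\to\infty} I_k(\Phi_k(\rho,\mu))$ (the limit existing because the sequence is monotone along refinements, up to the $1/k$ slack).

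The main obstacle is handling the fact that the coloring alphabets in the two definitions are genuinely different objects: in Definition~\ref{definition:GraphConvergenceLDGeneral} we color with the continuum $[0,1]$, while in Definition~\ref{definition:GraphConvergenceLDk} we color with $[k]$, and a uniform $[0,1]$-coloring pushed to $[k]$ via $\lceil k\cdot\rceil$ is uniform on $[k]$ only up to the factor $k^{-|V_n|}$ cancelling against the $k^{|V_n|}$ count of colorings — I must keep careful track of the $\log k$ normalizing terms so they cancel and do not leak into the rate functions. A secondary but real technical point is the boundary-discontinuity of the box functionals $\Phi_k$ noted above; I would dispose of it either by an explicit recoloring argument (perturbing $\epsilon|V_n|/D$ nodes as in the intuition paragraph following Definition~\ref{definition:GraphConvergenceLDk}, which shows the relevant probabilities are insensitive to $O(1/k)$ shifts of the partition) or by choosing partition endpoints generically; both are routine once set up, but they are what make the sandwich inclusions above literally true rather than merely morally true.
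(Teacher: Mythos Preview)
Your overall architecture matches the paper's: contraction principle for LD $\Rightarrow$ $k$-LD, and a sandwich argument combined with Proposition~\ref{prop:LDReverse} (using compactness of $\M^1\times\M^2$) for the converse, with the rate function $I$ recovered as a limit along refinements $k=2^\ell$. The reverse direction as you sketch it is essentially what the paper does, modulo one correction: the paper does not set $I(\rho,\mu)=\lim_k I_k(\Phi_k(\rho,\mu))$ pointwise, but rather $I(\rho,\mu)=\lim_{\ell\to\infty}\inf_{(\rho',\mu')\in B_{2^\ell}} I_{2^\ell}(\rho',\mu')$ where $B_k$ is a $2/k$-ball intersected with the piecewise-constant measures. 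The infimum over a shrinking ball is what makes the monotonicity exact (via a lemma $I_k(T_k(\cdot))\le I_{2k}(\cdot)$), not ``monotone up to $1/k$ slack''.

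There is, however, a genuine gap in your forward direction. You correctly flag that the box-evaluation map $\Phi_k$ is discontinuous on $\M^1\times\M^2$ (mass can sit on the hyperplanes $\{i/k\}$), but neither of your proposed fixes works as stated. Choosing ``generic'' interval endpoints proves the wrong statement: $k$-LD-convergence is defined via the specific partition into $[\tfrac{i-1}{k},\tfrac{i}{k})$, corresponding to the integer coloring $\lceil k\sigma\rceil$, so you are not free to move the endpoints. And ``the discontinuity set has $(\rho,\mu)$-measure zero for the limiting measures of interest'' is not a meaningful hypothesis for an LDP contraction; what one actually needs is that the \emph{rate function} $I$ is $+\infty$ on the discontinuity set. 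The paper supplies exactly this missing lemma: if $\rho(\{x\})>0$ for some $x$, or $\mu(\{x\}\times[0,1])>0$, then $I(\rho,\mu)=\infty$ (an easy counting bound on the probability that $\ge \alpha|V_n|/2$ of the i.i.d.\ uniforms land in a tiny interval). With that in hand one restricts the LDP to the open subspace $\M_k^1\times\M_k^2$ of measures giving zero mass to the grid, on which $\Phi_k$ \emph{is} continuous, and then the contraction principle applies cleanly.

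One minor point: your worry about ``$\log k$ normalizing terms'' is a red herring. If $\tilde\sigma_n$ is i.i.d.\ uniform on $[0,1]$ then $\lceil k\tilde\sigma_n\rceil$ is (almost surely) exactly i.i.d.\ uniform on $[k]$; there is no $k^{-|V_n|}$ correction to track, and no $\log k$ term enters the rate functions.
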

\begin{remark}\label{cor:Ik-from-I}
Given the rate function $I$ for LD-convergence, the
rate function
$I_k:\Dk\to \R_+\cup \{\infty\}$ for $k$-LD-convergence is easy to calculate, and will
be given by
\[
I_k(x,X)=\inf_{(\rho,\mu)} I(\mu,\rho)
\]
where the $\inf$ goes over all
$(\rho,\mu)\in \M^1\times\M^2$ such that
\begin{equation}
\label{phi-k-def}
\begin{aligned}
x_{i+1}&=\rho\left(\Delta_{i,k}\right), \quad& 0\le i\le k-1;\\
X_{i+1,j+1}&=\mu\left(\Delta_{i,k}\times \Delta_{j,k}\right), \quad& 0\le i,j\le k-1,
\end{aligned}
\end{equation}
with $\Delta_{i,k}$ denoting the open interval  $\Delta_{i,k}=\left({i\over k},{(i+1)\over k}\right)$.

We will also give an explicit construction for the rate function $I$ from the rate functions $I_k$, see Corollary~\ref{cor:Ikhat} below.
\end{remark}

Before proving the theorem, we reformulate the notion of
$k$-LD-convergence in terms of random measures with piecewise
constant densities. We need some notation.  Given a positive
integer $k$, let $O_k^d\subset [0,1]^d$ be the open set of all
points such that no coordinate is of the form
$0,1/k,2/k,\ldots,1$. Let $\M_k^d\subset \M^d$ be the set of
measures supported on $O_k^d$, and let $\N^d_k\subset \M^d_k$
be the set of measures in $\M^d_k$ which have constant density
on rectangles
\begin{align*}
\Delta_{i_1,k}\times \cdots \Delta_{i_d,k}, \qquad 0\le i_1,\ldots, i_d\le k-1.
\end{align*}
 We
define a projection operator $T_k:\M^1\times\M^2\to
\N_k^1\times\N_k^2$ by mapping a pair of measures
$(\rho,\mu)\in \M^1\times\M^2$ into
$(\rho_k,\mu_k)=T_k(\rho,\mu)$ defined as the pair of measures
in $\N_k^1\times\N_k^2$ for which
\begin{equation}
\label{rhok-muk}
\begin{aligned}
\rho_k(\Delta_{i,k})&=\rho(\Delta_{i,k})\quad\text{for}\quad 0\leq i\leq k-1
\\
\mu_k\left(\Delta_{i,k}\times \Delta_{j,k}\right)&=\mu\left(\Delta_{i,k}\times \Delta_{j,k}\right)
\quad\text{for}\quad 0\leq i,j,\leq k-1.
\end{aligned}
\end{equation}
Given a real valued coloring $\sigma:V_n\to [0,1]$  let
$\sigma_k=t_k(\sigma)$ be the coloring $\sigma_k:V_n\to [k]$
defined by $\sigma_k(u)=\lceil k\sigma(k)\rceil$ if
$\sigma(u)>0$ and $\sigma_k(u)=1$ if $\sigma(u)=0$.  Note that
$t_k(\sigma_n)$ is an i.i.d.\ uniformly random coloring with
colors in $[k]$ if $\sigma_n$ is chosen to be an i.i.d.\
uniformly random coloring with colors in $[0,1]$.

\begin{lemma}\label{lemma:GraphConvergenceLDk}
Let $\G_n=(V_n,E_n)$, and let $\sigma_n:V_n\to [0,1]$ be an
i.i.d. uniformly random coloring of $V_n$. Then $\G_n$ is
$k$-LD-convergent if an only if
$T_k(\rho(\sigma_n),\mu(\sigma_n))$ obeys a LDP on
$\N^1_k\times \N^2_k$.
\end{lemma}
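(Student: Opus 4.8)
The plan is to show that $k$-LD-convergence of $\G_n$ is equivalent to the statement that $T_k(\rho(\sigma_n),\mu(\sigma_n))$ satisfies an LDP on $\N^1_k\times\N^2_k$, and the key observation is that both spaces are genuinely the same object up to a bi-Lipschitz homeomorphism. Concretely, I would first exhibit the natural correspondence between $\N^1_k\times\N^2_k$ and $\Dk$: a pair $(\rho_k,\mu_k)\in\N^1_k\times\N^2_k$ is completely determined by the vector of values $(\rho_k(\Delta_{i,k}),\ 0\le i\le k-1)$ together with the matrix $(\mu_k(\Delta_{i,k}\times\Delta_{j,k}),\ 0\le i,j\le k-1)$, and conversely any such vector and matrix (with entries in $[0,D]$, noting the mass bounds $\rho_k([0,1])=1\le D$ and $\mu_k\le D$) determines a unique piecewise-constant pair of measures. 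This gives an explicit bijection $\Phi_k:\N^1_k\times\N^2_k\to\Dk$. I would then check that under this identification the coloring $\sigma_k=t_k(\sigma)$ with colors in $[k]$ produces exactly the $k$-quotient data: by construction $x_{i+1}(\sigma_k)=|\sigma^{-1}(\Delta_{i,k})|/|V_n|=\rho(\sigma)(\Delta_{i,k})=\rho_k(\Delta_{i,k})$ where $(\rho_k,\mu_k)=T_k(\rho(\sigma),\mu(\sigma))$, and similarly $X_{i+1,j+1}(\sigma_k)=\mu(\sigma)(\Delta_{i,k}\times\Delta_{j,k})=\mu_k(\Delta_{i,k}\times\Delta_{j,k})$ (here one uses that $\sigma$ is a.s.\ supported off the grid points, so $t_k$ and $T_k$ are compatible and the edge cases $\sigma(u)=0,1/k,\dots$ occur with probability zero). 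In short, $\Phi_k\circ T_k(\rho(\sigma_n),\mu(\sigma_n))$ has the same law as $(x(\sigma_n),X(\sigma_n))$.

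Next I would verify that $\Phi_k$ is a homeomorphism between the metric spaces $(\N^1_k\times\N^2_k,d)$ (with $d$ the Prokhorov-product metric restricted from $\M^1\times\M^2$) and $(\Dk,\Lnorm_\infty)$, or at least that it is Borel-bimeasurable and bicontinuous, which is all that is needed. The point is that on the finite-dimensional family of piecewise-constant measures, weak convergence is equivalent to convergence of the finitely many ``block masses'' $\rho_k(\Delta_{i,k})$ and $\mu_k(\Delta_{i,k}\times\Delta_{j,k})$; one shows the Prokhorov distance between two such measures is bounded above and below by a constant (depending on $k$) times the $\Lnorm_\infty$-distance of their block-mass data. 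Since the LDP is invariant under pushforward by a homeomorphism (the contraction principle, or simply the fact that (\ref{eq:LDprinciple}) is phrased in terms of images of open and closed sets, which a homeomorphism preserves), $T_k(\rho(\sigma_n),\mu(\sigma_n))$ satisfies an LDP on $\N^1_k\times\N^2_k$ if and only if $(x(\sigma_n),X(\sigma_n))=\Phi_k\circ T_k(\rho(\sigma_n),\mu(\sigma_n))$ satisfies an LDP on $\Dk$, and the latter is exactly Definition~\ref{definition:GraphConvergenceLDk} of $k$-LD-convergence. The rate functions are related by $I_k = \hat I_k\circ\Phi_k^{-1}$, where $\hat I_k$ is the rate function on $\N^1_k\times\N^2_k$.

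The main technical point — and the only place requiring care rather than bookkeeping — is the equivalence of the Prokhorov metric with the coordinatewise metric on the finite-dimensional set $\N^1_k\times\N^2_k$; once that is in hand, everything is a transfer of the LDP along a homeomorphism. One subtlety worth spelling out is that $\N^d_k$ is not closed inside $\M^d$ (measures can lose mass to the grid hyperplanes in a weak limit), so strictly one should either work with $\N^d_k$ as a standalone metric space in its own right (which is what the lemma statement does — it asks for an LDP ``on $\N^1_k\times\N^2_k$'') or note that the relevant probability measures are all supported on $\N^d_k$, so the LDP on the ambient space and on $\N^d_k$ carry the same content via restriction; I would phrase the argument so that $\N^1_k\times\N^2_k$ is treated intrinsically, matching $\Dk$ exactly, and remark that with the embedding $\N^d_k\hookrightarrow\M^d$ the issue of non-closedness is harmless because the sequence $T_k(\rho(\sigma_n),\mu(\sigma_n))$ lives entirely in $\N^1_k\times\N^2_k$. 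The remaining details — checking the pushforward-of-LDP statement, verifying the probability-zero exceptional set for $t_k$ versus $T_k$ — are routine.
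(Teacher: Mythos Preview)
Your proposal is correct and follows essentially the same route as the paper: both arguments exhibit the natural bijection between $\N^1_k\times\N^2_k$ and $\Dk$ (the paper calls its map $F_k$, going in the opposite direction to your $\Phi_k$), observe that it is a homeomorphism, invoke the Contraction Principle to transfer the LDP, and use that $t_k(\sigma_n)$ is uniform on $[k]$ so that $T_k(\rho(\sigma_n),\mu(\sigma_n))$ has the same law as the image of the $k$-quotient data under this homeomorphism. The paper's proof is terser---it simply asserts that $F_k$ is ``an invertible continuous function with continuous inverse'' without the bi-Lipschitz discussion or the remark about non-closedness of $\N^d_k$ in $\M^d$---but the content is the same.
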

\begin{proof}
Let $\hat\sigma_n:V_n\to [k]$ be an i.i.d uniformly random
coloring of $V_n$ with colors in $[k]$, and define
$F_k:\Dk\rightarrow \N^1_k\times \N^2_k$ as follows: given
$(x,X)\in \Dk$ with $x=(x_i, 1\le i\le k)$ and $X=(X_{ij}, 1\le
i,j\le k)$, let $F_k((x,X)):=(\rho,\mu)\in \N_k^1\times\N_k^2$
be the piece-wise constant measures with densities $kx_{i}$ on
$\Delta_{i-1,k}$  and   $k^2X_{i,j}$ on $\Delta_{i-1,k}\times
\Delta_{j-1,k}$, respectively. Then $F_k$ is an invertible
continues function with continuous inverse. Thus, applying the
Contraction Principle,
$(\rho_k(\hat\sigma_n),\mu_k(\hat\sigma_n))=F_k((x(\hat\sigma_n),X(\hat\sigma_n)))$
satisfies the LDP on $\N^1_k\times \N^2_k$ if and only if
$(x(\hat\sigma_n),X(\hat\sigma_n))$ satisfies the LDP on $\Dk$.

To conclude the proof, we only need to show that
$T_k(\rho(\sigma_n),\mu(\sigma_n))$ has the same distribution
as $(\rho_k(\hat\sigma_n),\mu_k(\hat\sigma_n))$.  But this
follows immediately from the facts that  $t_k(\sigma_n)$ has
the same distribution as $\hat\sigma_n$ and that
$T_k(\rho(\sigma_n),\mu(\sigma_n))$ depends only on the total
mass in the intervals $\Delta_{k,i}$, so that
$T_k(\rho(\sigma_n),\mu(\sigma_n))=(\rho_k(t_k(\sigma_n)),\mu_k(t_k(\sigma_n)))$.
\end{proof}

\subsubsection{Proof of Theorem~\ref{theorem:LDconvergenceEquivalence}, Part 1: LD-convergence implies $k$-LD-convergence}

In view of the last lemma, it seems quite intuitive that
LD-convergence should imply $k$-LD-convergence, given that the
set of measures $\N^1_k\times \N^2_k$ is much smaller than the
set of measures $\M^1\times \M^2$. More formally, one might
hope to define a continuous map from $\M^1\times \M^2$ to
$\N^1_k\times \N^2_k$ (or to $\Dk$) and use the Contraction
Principle to prove the desired $k$-LD-convergence.

It turns out that we can't quite do that due to point masses on
points of the form $i/k, 1\le i\le k$. To address this issue,
we will first prove that we can restrict ourselves to
$\M_k^1\times\M_k^2$, and then define a suitable continuous
function from  $\M_k^1\times\M_k^2$ to $\Dk$ to apply the
Contraction Principle. We need the following lemma.

\begin{lemma}\label{lemma:PositiveMassMeasure}
Suppose $\G_n$ is LD-convergent with  rate function $I$.
Suppose $\rho$ is such that $\rho(\{x\})>0$ for some $x\in
[0,1]$. Then for every $\mu\in\M^2, I(\rho,\mu)=\infty$.
Similarly, suppose $\mu$ is  such that $\mu(\{x\}\times
[0,1])>0$ or $\mu([0,1]\times \{x\})>0$  for some $x\in [0,1]$.
Then for every $\rho\in\M^1, I(\rho,\mu)=\infty$.
\end{lemma}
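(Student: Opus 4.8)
The plan is to show that a single atom in $\rho$ (or a "line atom" in $\mu$) forces the large-deviations rate to be infinite, because such configurations are exponentially impossible to approximate: when $\sigma_n$ is an i.i.d.\ uniform coloring, no measure $\rho(\sigma_n)$ can place more than a vanishing fraction of its mass near any fixed point. Concretely, fix $x\in[0,1]$ with $\rho(\{x\})=2a>0$. For the pair $(\rho,\mu)$ to have finite rate, by \eqref{eq:Identity1} we would need $\liminf_n \theta_n^{-1}\log\pr_n(B((\rho,\mu),\epsilon))>-\infty$ for every $\epsilon>0$, i.e.\ the ball must have probability at least $e^{-C\theta_n}$ for some finite $C$ and all large $n$. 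I will contradict this by showing that for $\epsilon$ small enough the probability of $B((\rho,\mu),\epsilon)$ decays faster than any exponential, in fact is eventually zero or super-exponentially small.

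The key step is the following counting estimate. If $d(\rho(\sigma_n),\rho)\le\epsilon$ in the Prokhorov metric with $\epsilon<a$, then taking $A=\{x\}$ in the definition of $d$ gives $\rho(\{x\})\le \rho(\sigma_n)(A^\epsilon)+\epsilon$, so $\rho(\sigma_n)\big((x-\epsilon,x+\epsilon)\big)\ge 2a-\epsilon\ge a$. In other words, at least $a|V_n|$ of the nodes must receive a color in the interval $(x-\epsilon,x+\epsilon)$, an interval of length $2\epsilon$. Under the i.i.d.\ uniform measure, the probability that a given node lands in this interval is $2\epsilon$, so the probability that at least $a|V_n|$ nodes do is at most $\binom{|V_n|}{a|V_n|}(2\epsilon)^{a|V_n|}\le \big(e/a\big)^{a|V_n|}(2\epsilon)^{a|V_n|} = \big(2e\epsilon/a\big)^{a|V_n|}$. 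Choosing $\epsilon < a/(2e)$ makes this quantity equal to $\lambda^{|V_n|}$ for some $\lambda<1$, hence $\theta_n^{-1}\log\pr_n(B((\rho,\mu),\epsilon)) \le a\log(2e\epsilon/a) \to a\log(2e\epsilon/a)<0$ is bounded above by a fixed negative constant. Actually more is true: letting $\epsilon\to 0$ this upper bound tends to $-\infty$, so by \eqref{eq:Identity1} we get $I(\rho,\mu)=\lim_{\epsilon\to 0}\big(-\limsup_n\theta_n^{-1}\log\pr_n(B((\rho,\mu),\epsilon))\big)\ge \lim_{\epsilon\to 0}\big(-a\log(2e\epsilon/a)\big)=+\infty$, which is the claim. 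One small point to handle: the ball $B((\rho,\mu),\epsilon)$ in the product space is $B(\rho,\epsilon)\times B(\mu,\epsilon)$, so $\pr_n(B((\rho,\mu),\epsilon))\le \pr_n(\rho(\sigma_n)\in B(\rho,\epsilon))$, and the $\mu$-coordinate is simply discarded; the estimate above bounds exactly this marginal probability.

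For the second assertion, suppose $\mu(\{x\}\times[0,1])=2b>0$ (the case $\mu([0,1]\times\{x\})$ is symmetric). If $d(\mu(\sigma_n),\mu)\le\epsilon$ with $\epsilon<b$, applying the Prokhorov bound with $A=\{x\}\times[0,1]$ and $A^\epsilon\subseteq(x-\epsilon,x+\epsilon)\times[0,1]$ gives $\mu(\sigma_n)\big((x-\epsilon,x+\epsilon)\times[0,1]\big)\ge b$. By the definition of $\mu(\sigma_n)$ in \eqref{rho-sigma}, the left side counts (with weight $1/|V_n|$) oriented edges $(u,v)$ with $\sigma_n(u)\in(x-\epsilon,x+\epsilon)$; since every node has degree at most $D$, this is at most $D$ times the fraction of nodes $u$ with $\sigma_n(u)\in(x-\epsilon,x+\epsilon)$. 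Hence at least $(b/D)|V_n|$ nodes land in an interval of length $2\epsilon$, and the identical binomial estimate with $a$ replaced by $b/D$ gives $\theta_n^{-1}\log\pr_n(\cdot)\le (b/D)\log(2eD\epsilon/b)\to-\infty$ as $\epsilon\to 0$, so $I(\rho,\mu)=\infty$ for every $\rho$. The main obstacle, such as it is, is purely bookkeeping: being careful that the Prokhorov inequality is applied in the right direction (bounding $\rho(\sigma_n)$ from below on a small neighborhood of the atom) and that the degree bound $D$ is used to convert an edge-mass statement into a node-count statement; no real difficulty arises beyond that.
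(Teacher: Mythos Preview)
Your proof is correct and follows essentially the same approach as the paper's: use the Prokhorov inequality to force a positive fraction of the i.i.d.\ uniform colors into a short interval, then bound the binomial tail and send the interval length to zero. The only cosmetic differences are that you use the sharper estimate $\binom{n}{an}\le (e/a)^{an}$ where the paper uses the cruder $2^n$, and you spell out the $\mu$-case explicitly via the degree bound $D$ (edge mass $\ge b$ forces at least $(b/D)|V_n|$ nodes into the interval), whereas the paper simply declares that case ``similar.''
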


\begin{proof}
Suppose $\rho$ is such that $\rho(\{x\})=\alpha>0$ for some
$x\in [0,1]$. Fix $\delta<\alpha/2$. Suppose $\rho'$ is such
that $d(\rho,\rho')\le \delta$. Then, by definition of the
Prokhorov metric,
\begin{align*}
\alpha=\rho(\{x\})\le \rho'(x-\delta,x+\delta)+\delta,
\end{align*}
implying $\rho'(x-\delta,x+\delta)\ge \alpha/2$. Thus the event
$(\rho(\sigma_n),\mu(\sigma_n))\in B((\rho,\mu),\delta)$
implies the event $\rho(\sigma_n)(x-\delta,x+\delta)\ge
\alpha/2$. We now estimate the probability of this event. For
this event to occur we need to have at least $\alpha/2$
fraction of values $\sigma_n(u), u\in V_n$ to fall into the
interval $(x-\delta,x+\delta)$. This occurs with probability at
most
\begin{align*}
\sum_{i\ge (\alpha/2)|V_n|} {|V_n| \choose i}(2\delta)^{i}\le 2^{|V_n|}(2\delta)^{(\alpha/2)|V_n|}.
\end{align*}
Therefore
\begin{align*}
\limsup_n |V_n|^{-1}\pr\left(\rho(\sigma_n)(x-\delta,x+\delta)\ge \alpha/2\right)\le \log 2+(\alpha/2)\log(2\delta),
\end{align*}
implying
\begin{align*}
\lim_{\delta\rightarrow 0}\limsup_n |V_n|^{-1}\pr\left((\rho(\sigma_n),\mu(\sigma_n))\in B((\rho,\mu),\delta) \right)=-\infty,
\end{align*}
and thus $I(\rho,\mu)=\infty$ utilizing property
(\ref{eq:Identity1}). The proof for the case $\mu(\{x\}\times
[0,1])>0$ or $\mu([0,1]\times \{x\})>0$ is similar.
\end{proof}

We now establish that LD-convergence implies $k$-LD-convergence.
To this end, we first observe that for an i.i.d. uniformly
random coloring $\sigma_n:V_n\to [0,1]$, we have that
$\rho(\sigma_n)\in \M^1_k$ and $\mu(\sigma_n)\in\M^2_k$ almost
surely (as the probability of hitting one of the points $i/k$ is
zero).

Next we claim that since $(\rho(\sigma_n),\mu(\sigma_n))$
satisfies the LDP in $\M^1\times \M^2$, it also does so in
$\M_k^1\times \M_k^2$ with the same rate function. For this we
will show that (\ref{eq:LDprinciple}) holds when closures and
interiors are taken with respect to the metric space
$\M_k^1\times \M_k^2$ as opposed to $\M^1\times \M^2$. Indeed,
fix any set $A\subset\M_k^1\times \M_k^2$. Its closure in
$\M_k^1\times \M_k^2$ is a subset of its closure in $\M^1\times
\M^2$ and the set-theoretic difference between the two consists
of measures $(\rho,\mu)$ such that $\rho$ assigns a positive
mass to some point $i/k$, or $\mu$ assigns a positive mass to
some segment $\{i/k\}\times [0,1]$ or segment $[0,1]\times
\{j/k\}$. By Lemma~\ref{lemma:PositiveMassMeasure} the large
deviations rate $I$ at these points $(\rho,\mu)$ is infinite,
so that the value $\inf I(\rho,\mu)$ over the closure of $A$ in
$\M_k^1\times \M_k^2$ and $\M^1\times \M^2$ is the same.
Similarly, the interior of every set $A\subset \M_k^1\times
\M_k^2$ is a superset of its interior in $\M^1\times \M^2$, and
again the set theoretic difference between the two interiors
consists of points with infinite rate $I$. This proves the
claim.

Consider $\phi_k:\M_k^1\times \M_k^2\rightarrow \Dk$, where
$\phi_k$ maps every pair of measures $\rho,\mu$ into total
measure assigned to interval $\Delta_{i,k}$ by $\rho$  and
assigned to rectangle $\Delta_{i,k}\times \Delta_{j,k}$ by
$\mu$, for $i,j=0,1,\ldots,k-1$. Namely
$\phi_k(\rho,\mu)=(x,X)$ where $(x,X)$ is given by
\eqref{phi-k-def}.

We claim that $\phi_k$ is continuous with respect to
the respective
metrics. Indeed, suppose $(\rho_n,\mu_n)\rightarrow (\rho,\mu)$
in $\M_k^1\times \M_k^2$. Since $\M^d$ is equipped with the
weak topology, then for every closed (open) set $A\subset
[0,1]$, we have $\limsup_n\rho_n(A)\le \rho(A)
~~(\liminf_n\rho_n(A)\ge \rho(A))$. The same applies to
$\mu_n,\mu$. Setting $A$ to be $[i/k,(i+1)/k]$ first and then
$(i/k,(i+1)/k)$, and using the fact that $\rho_n$ and $\rho$
are supported on $O_k^1$, namely $\rho_n(i/k)=\rho(i/k)=0$ for
all $i$, we obtain $\rho_n(i/k,(i+1)/k)\rightarrow
\rho(i/k,(i+1)/k)$. A similar argument applies to $\mu_n,\mu$.
This proves the claim.

Since $\phi_k$ is continuous,  by the Contraction
Principle~\cite{demzei98}, the image of
$(\rho(\sigma_n),\mu(\sigma_n))$ under $\phi_k$ satisfies the
LDP as well. But this means precisely that we have
$k$-LD-convergence with rate function $I_k$ as defined
in Remark~\ref{cor:Ik-from-I} (strictly speaking,
the Contraction Principle gives a rate function where the
$\inf$ is taken over all $(\rho,\mu)\in \M_k^1\times \M_k^2$,
but by Lemma~\ref{lemma:PositiveMassMeasure} we can extend
the $\inf$ to that larger set  $\M^1\times \M^2$
without changing the value of $I_k$).

\subsubsection{Proof of Theorem~\ref{theorem:LDconvergenceEquivalence}, Part 2: $k$-LD-convergence implies LD-convergence}
In order to prove that $k$-LD-convergence  for all $k$ implies
LD-convergence, we would like to use the rate functions
$I_k:\N_k^1\times\N_k^2\to \R_+\cup\{\infty\}$ associated with
the LDP for $T_k(\rho(\sigma_n),\mu(\sigma_n))$ from
Lemma~\ref{lemma:GraphConvergenceLDk} to construct a suitable
rate function $I:\M^1\times\M^2\to \R_+\cup\{\infty\}$ for the
random variables $(\rho(\sigma_n),\mu(\sigma_n))$.  The
existence of such a rate function will follow from a  suitable
monotonicity property, see Lemma~\ref{lemma:Ikkhat} and
Corollary~\ref{cor:Ikhat} below.

Before stating Lemma~\ref{lemma:Ikkhat}, we state and prove a
more technical lemma which we will use at several places in
this subsection. For two pairs of measures
$(\rho,\mu)),(\rho',\mu')\in\N_k^1\times \N_k^2$ we define
\[
d_{var}((\rho,\mu)),(\rho',\mu'))=
\max
\Bigl\{
\max_{A\subset [0,1]}\left|\rho(A)-\rho'(A)\right|,
\max_{B\subset [0,1]^2}
\left|\mu(B)-\mu'(B)\right|
\Bigr\}.
\]

\begin{lemma}
\label{lem:Tk-1/k-bound} \hskip 12pt

1) If $(\rho,\mu)\in \M^1_k\times \M^2_k$, then
\begin{equation}
\label{tk-1/k-bound}
d((\rho,\mu),T_k(\rho,\mu))\leq \frac 1k.
\end{equation}

2) If $(\rho,\mu),(\rho',\mu')\in\N_k^1\times \N_k^2$, then
\begin{equation}
\label{dvar-dProc}
d((\rho,\mu),(\rho',\mu'))
\leq
d_{var}((\rho,\mu)),(\rho',\mu'))\leq (4kD+1) d((\rho,\mu),(\rho',\mu')).
\end{equation}

3) If $(\rho,\mu),(\rho',\mu')\in\N_{2k}^1\times \N_{2k}^2$,
then
\begin{equation}
\label{dvar-Tk}
d_{var}(T_k(\rho,\mu)),T_k(\rho',\mu'))
\leq
d_{var}((\rho,\mu)),(\rho',\mu')).
\end{equation}

\end{lemma}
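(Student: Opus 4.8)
\emph{Part~1.} Write $(\rho_k,\mu_k)=T_k(\rho,\mu)$. Since $(\rho,\mu)\in\M_k^1\times\M_k^2$ and $(\rho_k,\mu_k)\in\N_k^1\times\N_k^2$, the measures $\rho,\rho_k$ are both supported on $O_k^1$ and put the same mass on each open interval $\Delta_{i,k}$; hence the restrictions $\rho|_{\Delta_{i,k}}$ and $\rho_k|_{\Delta_{i,k}}$ have equal total mass $m_i$ for every $i$. The plan is to couple $\rho$ and $\rho_k$ cell by cell: let $\gamma=\sum_{i=0}^{k-1}\gamma_i$ on $[0,1]^2$, with $\gamma_i=m_i^{-1}\,\rho|_{\Delta_{i,k}}\otimes\rho_k|_{\Delta_{i,k}}$ (and $\gamma_i=0$ if $m_i=0$). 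Then $\gamma$ has first marginal $\rho$, second marginal $\rho_k$, and is supported on $\bigcup_i(\Delta_{i,k}\times\Delta_{i,k})\subseteq\{(x,y):|x-y|<1/k\}$, so for every measurable $A\subseteq[0,1]$,
\[
\rho(A)=\gamma(A\times[0,1])\le\gamma([0,1]\times A^{1/k})=\rho_k(A^{1/k})\le\rho_k(A^{1/k})+\tfrac1k ,
\]
and symmetrically; by the definition of the Prokhorov metric this gives $d(\rho,\rho_k)\le 1/k$. The identical coupling on the squares $\Delta_{i,k}\times\Delta_{j,k}$, which have $\|\cdot\|_\infty$-diameter $1/k$, gives $d(\mu,\mu_k)\le 1/k$, and \eqref{tk-1/k-bound} then follows from the definition of the metric on $\M^1\times\M^2$.

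\emph{Part~2.} The left inequality in \eqref{dvar-dProc} is the standard domination of the Prokhorov metric by total variation: if $d_{var}((\rho,\mu),(\rho',\mu'))=\tau$, then $\rho(A)\le\rho'(A)+\tau\le\rho'(A^\tau)+\tau$ for every $A$ (since $A\subseteq A^\tau$), together with the reversed inequality and the same for $\mu,\mu'$; thus $\tau$ obeys the two Prokhorov conditions and $d((\rho,\mu),(\rho',\mu'))\le\tau$. For the right inequality, put $\tau=d((\rho,\mu),(\rho',\mu'))$, so $d(\rho,\rho')\le\tau$ and $d(\mu,\mu')\le\tau$; we may assume $\tau\le 1/k$, for otherwise $(4kD+1)\tau>4D\ge D\ge d_{var}((\rho,\mu),(\rho',\mu'))$, as any two measures in $\M^d$ differ by at most $D$ on any set. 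Since $\mu,\mu'$ are constant on the cells $\Delta_{i,k}\times\Delta_{j,k}$, the supremum defining the $\mu$-part of $d_{var}$ is attained on the union $B$ of those cells where $\mu>\mu'$ (or on its complement), so the Prokhorov inequality applied to this single set gives
\[
\mu(B)-\mu'(B)\le \mu'(B^\tau\setminus B)+\tau .
\]
Now $B$ is a union of grid cells, so a point of $B^\tau$ lying in a cell $c$ not contained in $B$ must lie within $\|\cdot\|_\infty$-distance $\tau$ of the boundary of $c$; hence $(B^\tau\setminus B)\cap c$ is contained in a frame of width $\tau$ inside a $\tfrac1k\times\tfrac1k$ square, of Lebesgue measure at most $4\tau/k$. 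Multiplying by the constant density $k^2\mu'(c)$ of $\mu'$ on $c$, summing over cells, and using $\sum_c\mu'(c)=\mu'([0,1]^2)\le D$, we get $\mu'(B^\tau\setminus B)\le 4kD\tau$ and therefore $\mu(B)-\mu'(B)\le(4kD+1)\tau$. The same argument on $[0,1]$ for $\rho,\rho'$ (where the one-dimensional frame has measure at most $2\tau$, giving the sharper bound $(2kD+1)\tau$), together with the symmetric estimates for $\mu'-\mu$ and $\rho'-\rho$, yields $d_{var}((\rho,\mu),(\rho',\mu'))\le(4kD+1)\tau$, which is \eqref{dvar-dProc}.

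\emph{Part~3.} This follows from the linearity of $T_k$. Measures in $\N_{2k}^d$ assign zero mass to every set of the form $\{x_\ell=i/k\}$, so both $\rho-\rho'$ and $\mu-\mu'$ assign zero mass to the boundaries between the $2^d$ sub-cells $c$ of the $\tfrac1{2k}$-grid contained in any $\tfrac1k$-cell $C$; consequently, for any signed measure $\nu$ that is constant on the $\tfrac1{2k}$-cells, $T_k\nu$ is constant on the $\tfrac1k$-cells with $(T_k\nu)(C)=\sum_{c\subseteq C}\nu(c)$, and since $\sup_A\nu(A)=\sum_c\nu(c)^+$,
\[
\sup_A (T_k\nu)(A)=\sum_C\Bigl(\sum_{c\subseteq C}\nu(c)\Bigr)^+\le \sum_C\sum_{c\subseteq C}\nu(c)^+=\sum_c\nu(c)^+=\sup_A\nu(A) ;
\]
replacing $\nu$ by $-\nu$ gives $\sup_A|(T_k\nu)(A)|\le\sup_A|\nu(A)|$, and applying this with $\nu=\rho-\rho'$ and with $\nu=\mu-\mu'$ (the latter over subsets of $[0,1]^2$) and taking the maximum yields \eqref{dvar-Tk}.

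The step I expect to cost the most work is the right inequality of \eqref{dvar-dProc}: one must avoid estimating $\mu'(B^\tau\setminus B)$ cell by cell with the worst-case density of order $k^2D$, which would lose a factor $k^2$, and instead exploit that the collar $B^\tau\setminus B$ hugs the grid lines, so that in each cell it contributes only through the \emph{area} of order $\tau/k$ of a thin frame while the densities are amortized against the global mass bound $D$; the geometric bookkeeping of this collar for an arbitrary union of grid cells, together with the harmless reduction to $\tau\le 1/k$, is the only delicate point, Parts~1 and~3 being routine.
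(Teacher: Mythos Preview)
Your proof is correct and, in Parts~2 and~3, follows essentially the same line as the paper: for Part~2 you reduce to grid-unions and control the mass of the $\tau$-collar $B^\tau\setminus B$ by the area of a frame times the piecewise-constant density, summing to at most $4k\tau D$ (the paper phrases the same estimate as $\mu(B^\tau)\le\mu(B)+4k\tau D$); for Part~3 you use the coarse-graining inequality $(\sum_{c\subseteq C}\nu(c))^+\le\sum_{c\subseteq C}\nu(c)^+$, which is exactly the content of the paper's observation that the supremum in $d_{var}(T_k\rho,T_k\rho')$ is attained on unions of the coarser cells, where it coincides with a value already dominated by $d_{var}(\rho,\rho')$.

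The one genuine variation is Part~1: the paper argues directly with sets, replacing $A$ by the grid-saturation $A_k=\bigcup\{\Delta_{i,k}:A\cap\Delta_{i,k}\ne\emptyset\}$ and using $A\subset A_k\subset A^{1/k}$ together with $\rho(A_k)=\rho_k(A_k)$. You instead build an explicit coupling $\gamma$ supported on $\bigcup_i(\Delta_{i,k}\times\Delta_{i,k})$ and read off the Prokhorov bound from the $1/k$ diameter of the cells. Both are standard routes to the Prokhorov inequality; your coupling proof is slightly slicker in that it handles the two directions and the two-dimensional case uniformly, while the paper's argument avoids introducing the product measure and is a line shorter. Neither buys anything essential over the other here.
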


\begin{proof}

1) Set $(\rho_k,\mu_k)=T_k(\rho,\mu)$, and let $A\subset
[0,1]$. Define $A_k$ to be the set that is obtained by
replacing every non-empty set $A\cap \Delta_{i,k}$ with the
entire interval $\Delta_{i,k}$. Then $A\subset A_k\subset
A^{1\over k}$. We have $\rho(A\cap \Delta_{i,k})\le
\rho(\Delta_{i,k})=\rho_k(\Delta_{i,k})$ by the definition of
$\rho_k$. Let $\alpha=\{i: A\cap \Delta_{i,k}\ne\emptyset\}$.
Then
\[
\rho(A)=\sum_{i\in \alpha}\rho(A\cap \Delta_{i,k})
\le\sum_{i\in \alpha}\rho_k(\Delta_{i,k})
=\rho_k(A_k)
\le\rho_k(A^{1\over k}).
\]
Conversely,
\[
\rho_k(A)\le \rho_k(A_k)=\rho(A_k)\le \rho(A^{1\over k}).
\]
A similar argument is used for $\mu$.

2) The lower bound follows immediately from the definitions of
$d$ and $d_{var}$.   To prove the upper bound,
 we first show that
\[
\max_{A\subset [0,1]}\left|\rho(A)-\rho'(A)\right|\leq (2kD+1)d(\rho,\rho').
\]
To this end, we note the maximum is obtained when $A$ is of the
form $A=\bigcup_{i\in \alpha}\Delta_{i,k}$ for some
$\alpha\subset \{0,\dots,k-1\}$.  On the other hand, for sets
$A$ of this form, we have that
$\rho(A^\epsilon)\leq\rho(A)+2k\epsilon\rho(A^c)\leq
\rho(A)+2k\epsilon D$, a bound which follows from the fact that
$A^c=[0,1]\setminus A$ is a union of intervals of length at
least $1/k$.  Choosing $\epsilon=d(\rho,\rho')$ we then have
$\rho(A')\leq \rho(A^\epsilon)+\epsilon\leq \rho(A)
+(2kD+1)\epsilon$, which implies $\rho'(A)-\rho(A)\leq
(2kD+1)\epsilon=(2kD+1)d(\rho,\rho')$. Exchanging the roles of
$\rho$ and $\rho'$ gives $\rho(A)-\rho'(A)\leq
(2kD+1)d(\rho,\rho')$, which completes the claim.  In a similar
way, one proves that
 \[
\max_{B\subset [0,1]^2}\left|\mu(B)-\mu'(B)\right|\leq (4kD+1)d(\mu,\mu').
\]
The proof uses that for sets $B$ which are unions of sets of
the form $\Delta_{i,k}\times\Delta_{j,k}$ we have
$\mu(B^{\epsilon})\leq\mu(B)+4k\epsilon D$.

3) Let $(\rho_k,\mu_k)=T_k(\rho,\mu)$ and
$(\rho_k^\prime,\mu_k^\prime)=T_k(\rho',\mu')$. We need to show
that $d_{var}(\rho_k,\rho_k^\prime)\leq
d_{var}(\rho,\rho^\prime)$ and $d_{var}(\mu_k,\mu_k^\prime)\leq
d_{var}(\mu,\mu^\prime)$.  The first bound follows from the
definition of $T_k$ and the fact that the maximum in the
definition $ d_{var}(\rho_k,\rho_k^\prime)= \max_{A\subset
[0,1]}|\rho_k(A)-\rho_k^\prime(A)| $ is obtained when $A$ is of
the form $A=\bigcup_{i\in \alpha}\Delta_{i,k}$ for some
$\alpha\subset\{0,\dots,k-1\}$. Indeed, for such an $A$, one
easily shows that
$|\rho_k(A)-\rho_k^\prime(A)|=|\rho(A)-\rho^\prime(A)|$, which
in turn implies that $d_{var}(\rho_k,\rho_k^\prime)\leq
d_{var}(\rho,\rho^\prime)$.  The proof of the bound
$d_{var}(\mu_k,\mu_k^\prime)\leq d_{var}(\mu,\mu^\prime)$ is
similar.
\end{proof}

%

\begin{lemma}\label{lemma:Ikkhat}
Suppose $\G_n$ is $k$-LD-convergent, and let
$I_k:\N_k^1\times\N_k^2\to \R_+\cup\{\infty\}$ be
the rate function associated with
the LDP for $T_k(\rho(\sigma_n),\mu(\sigma_n))$.
Then
$I_{k}(T_k(\rho,\mu))\le I_{2k}(\rho,\mu)$ for every
$(\rho,\mu)\in\N^1_{2k}\times \N^2_{2k}$.
\end{lemma}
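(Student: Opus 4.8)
The plan is to factor the projection $T_k$ through $T_{2k}$, apply the Contraction Principle to the intermediate space $\N_{2k}^1\times\N_{2k}^2$, and then use uniqueness of the rate function to identify the resulting rate function with $I_k$. The starting point is the elementary identity $T_k=T_k\circ T_{2k}$, valid on any pair of measures that assigns no mass to the lines $\{j/(2k)\}\times[0,1]$ and $[0,1]\times\{j/(2k)\}$, $0\le j\le 2k$. Indeed, $\Delta_{i,k}=\Delta_{2i,2k}\cup\{(2i+1)/(2k)\}\cup\Delta_{2i+1,2k}$, so the mass such a measure assigns to $\Delta_{i,k}$ (and, in two dimensions, to a rectangle $\Delta_{i,k}\times\Delta_{j,k}$) is unchanged under $T_{2k}$, while $T_k$ depends only on exactly these masses. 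Since the i.i.d.\ uniform coloring $\sigma_n$ almost surely avoids the values $j/(2k)$, the measures $\rho(\sigma_n),\mu(\sigma_n)$ almost surely have no atoms on the $2k$-grid, and hence $T_k(\rho(\sigma_n),\mu(\sigma_n))=T_k\bigl(T_{2k}(\rho(\sigma_n),\mu(\sigma_n))\bigr)$ almost surely; in particular these two $\N_k^1\times\N_k^2$-valued random variables have the same distribution.

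Next I would verify that the restriction of $T_k$ to $\N_{2k}^1\times\N_{2k}^2$ is a continuous (indeed Lipschitz) map into $\N_k^1\times\N_k^2$. It maps into $\N_k^1\times\N_k^2$ by the very definition of $T_k$, and for $(\rho,\mu),(\rho',\mu')\in\N_{2k}^1\times\N_{2k}^2$, parts (2) and (3) of Lemma~\ref{lem:Tk-1/k-bound} give
\[
d\bigl(T_k(\rho,\mu),T_k(\rho',\mu')\bigr)\le d_{var}\bigl(T_k(\rho,\mu),T_k(\rho',\mu')\bigr)\le d_{var}\bigl((\rho,\mu),(\rho',\mu')\bigr)\le (8kD+1)\,d\bigl((\rho,\mu),(\rho',\mu')\bigr).
\]

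Now, since $\G_n$ is $k$-LD-convergent for every $k$, Lemma~\ref{lemma:GraphConvergenceLDk} says that $T_{2k}(\rho(\sigma_n),\mu(\sigma_n))$ obeys an LDP on $\N_{2k}^1\times\N_{2k}^2$ with rate function $I_{2k}$, while $T_k(\rho(\sigma_n),\mu(\sigma_n))$ obeys an LDP on $\N_k^1\times\N_k^2$ with rate function $I_k$. Applying the Contraction Principle to the continuous map $T_k\colon\N_{2k}^1\times\N_{2k}^2\to\N_k^1\times\N_k^2$, the pushforward $T_k\bigl(T_{2k}(\rho(\sigma_n),\mu(\sigma_n))\bigr)$ obeys an LDP on $\N_k^1\times\N_k^2$ with rate function
\[
J(\nu)=\inf\Bigl\{\,I_{2k}(\rho,\mu)\ :\ (\rho,\mu)\in\N_{2k}^1\times\N_{2k}^2,\ T_k(\rho,\mu)=\nu\,\Bigr\}.
\]
By the first paragraph this pushforward is distributed exactly as $T_k(\rho(\sigma_n),\mu(\sigma_n))$, and since the rate function of an LDP is uniquely recovered from the underlying measures via \eqref{eq:Identity1}, we conclude $I_k=J$. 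Finally, for any fixed $(\rho,\mu)\in\N_{2k}^1\times\N_{2k}^2$, the pair $(\rho,\mu)$ is itself admissible in the infimum defining $J\bigl(T_k(\rho,\mu)\bigr)$, so $I_k\bigl(T_k(\rho,\mu)\bigr)=J\bigl(T_k(\rho,\mu)\bigr)\le I_{2k}(\rho,\mu)$, which is the asserted inequality.

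The only steps needing a little care are the almost-sure identification of the laws of $T_k(\rho(\sigma_n),\mu(\sigma_n))$ and $T_k\bigl(T_{2k}(\rho(\sigma_n),\mu(\sigma_n))\bigr)$ (handled by the atom-freeness on the $2k$-grid) and the continuity of $T_k$ on $\N_{2k}^1\times\N_{2k}^2$ needed for the Contraction Principle (handled by Lemma~\ref{lem:Tk-1/k-bound}); I do not expect a genuine obstacle here, as the argument is essentially ``LDP is preserved under continuous pushforward, plus uniqueness of the rate function.''
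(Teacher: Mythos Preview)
Your proof is correct and rests on the same two ingredients as the paper's: the almost-sure factorization $T_k=T_k\circ T_{2k}$ on $(\rho(\sigma_n),\mu(\sigma_n))$, and the Lipschitz continuity of $T_k$ on $\N_{2k}^1\times\N_{2k}^2$ coming from parts (2) and (3) of Lemma~\ref{lem:Tk-1/k-bound}. The difference is one of packaging. The paper argues directly: from the Lipschitz bound it gets the probability inclusion
\[
\pr\Bigl(T_k(\rho(\sigma_n),\mu(\sigma_n))\in B\bigl(T_k(\rho,\mu),(4kD+1)\epsilon\bigr)\Bigr)\ge \pr\Bigl(T_{2k}(\rho(\sigma_n),\mu(\sigma_n))\in B\bigl((\rho,\mu),\epsilon\bigr)\Bigr),
\]
and then applies \eqref{eq:Identity1} to compare the two rate functions. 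You instead invoke the Contraction Principle and uniqueness of the rate function as black boxes. This is legitimate here because $\N_{2k}^1\times\N_{2k}^2$ is compact (it is homeomorphic to $[0,D]^{(2k+1)\times 2k}$ via the map $F_{2k}$ from Lemma~\ref{lemma:GraphConvergenceLDk}), so the lower semi-continuous rate function $I_{2k}$ is automatically good and the Contraction Principle applies without further hypotheses. Your route has the minor bonus of yielding the equality $I_k(\nu)=\inf\{I_{2k}(\rho,\mu):T_k(\rho,\mu)=\nu\}$ rather than just the one-sided inequality the lemma asserts; the paper's hands-on estimate only gives the inequality, which is all that is needed downstream in Corollary~\ref{cor:Ikhat}.
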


\begin{proof}
Set $(\rho_k,\mu_k)=T_k(\rho,\mu)$,  fix $\epsilon>0$, and set
$\epsilon'=(4kD+1)\epsilon$. By the bounds \eqref{dvar-dProc}
and \eqref{dvar-Tk}, we have that
\begin{equation}
\label{in-B-eps}
(\rho_k^\prime,\mu_k^\prime)=T_{k}(\rho',\mu')\in B(T_k(\rho,\mu),\epsilon')
\end{equation}
whenever $(\rho',\mu')\in B((\rho,\mu),\epsilon)$. On the other
hand, $T_k(\rho(\sigma_n),\mu(\sigma_n))=
T_k(T_{2k}(\rho(\sigma_n),\mu(\sigma_n)))$.  By
\eqref{in-B-eps}, this implies that
\begin{align*}
\pr\Big( T_k(\rho(\sigma_n),\mu(\sigma_n))\in B\Bigl(T_k(\rho,\mu),(4kD+1)\epsilon\Bigr)\Big)
\geq
\pr\Big( T_{2k}(\rho(\sigma_n),\mu(\sigma_n))\in B\Bigl((\rho,\mu),\epsilon\Bigr)\Big).
\end{align*}
Applying \eqref{eq:Identity1} we obtain the claim of the lemma.
\end{proof}

\begin{coro}
\label{cor:Ikhat} Suppose $\G_n$ is $k$-LD-convergent for all
$k$, let $I_k:\N_k^1\times\N_k^2\to \R_+\cup\{\infty\}$  be
the rate function associated with
the LDP for $T_k(\rho(\sigma_n),\mu(\sigma_n))$,
let $(\rho,\mu)\in
\M^1\times\M^2$, and let $B_k$ be the ball
$B((\rho,\mu),2k^{-1})\cap \N^1_{k}\times\N^2_{k}$. Then the
limit
\[
I(\rho,\mu)=\lim_{\ell\to\infty}\inf_{(\rho',\mu')\in B_{2^\ell}}I_{2^\ell}(\rho',\mu')
\]
exists.
\end{coro}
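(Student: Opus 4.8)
The plan is to show that the sequence $a_\ell := \inf_{(\rho',\mu')\in B_{2^\ell}} I_{2^\ell}(\rho',\mu')$ is eventually monotone nondecreasing (up to a harmless technicality about the ball radii), hence convergent in $\R_+\cup\{\infty\}$. The key input is Lemma~\ref{lemma:Ikkhat}, which says $I_k(T_k(\rho,\mu))\le I_{2k}(\rho,\mu)$ for $(\rho,\mu)\in\N^1_{2k}\times\N^2_{2k}$. Setting $k=2^\ell$, I want to compare $a_\ell$ and $a_{\ell+1}$ by transporting a near-minimizer at level $2^{\ell+1}$ down to level $2^\ell$ via $T_{2^\ell}$.

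First I would fix $\epsilon>0$ and pick $(\rho',\mu')\in B_{2^{\ell+1}}=B((\rho,\mu),2\cdot 2^{-(\ell+1)})\cap\N^1_{2^{\ell+1}}\times\N^2_{2^{\ell+1}}$ with $I_{2^{\ell+1}}(\rho',\mu')\le a_{\ell+1}+\epsilon$. Apply $T_{2^\ell}$ to get $(\rho'_k,\mu'_k)=T_{2^\ell}(\rho',\mu')\in\N^1_{2^\ell}\times\N^2_{2^\ell}$. By Lemma~\ref{lemma:Ikkhat}, $I_{2^\ell}(\rho'_k,\mu'_k)\le I_{2^{\ell+1}}(\rho',\mu')\le a_{\ell+1}+\epsilon$. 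It remains to check that $(\rho'_k,\mu'_k)\in B_{2^\ell}=B((\rho,\mu),2\cdot 2^{-\ell})\cap\N^1_{2^\ell}\times\N^2_{2^\ell}$. By the triangle inequality and part 1) of Lemma~\ref{lem:Tk-1/k-bound},
\[
d\bigl((\rho,\mu),(\rho'_k,\mu'_k)\bigr)\le d\bigl((\rho,\mu),(\rho',\mu')\bigr)+d\bigl((\rho',\mu'),T_{2^\ell}(\rho',\mu')\bigr)\le 2^{-\ell}+2^{-\ell}=2\cdot 2^{-\ell},
\]
where the first term is bounded using $(\rho',\mu')\in B_{2^{\ell+1}}$ and the second using part 1) with $k=2^\ell$ (noting $(\rho',\mu')\in\M^1_{2^\ell}\times\M^2_{2^\ell}$ since $\N^1_{2^{\ell+1}}\times\N^2_{2^{\ell+1}}\subset\M^1_{2^\ell}\times\M^2_{2^\ell}$). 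Hence $(\rho'_k,\mu'_k)\in B_{2^\ell}$, so $a_\ell\le I_{2^\ell}(\rho'_k,\mu'_k)\le a_{\ell+1}+\epsilon$. Letting $\epsilon\to 0$ gives $a_\ell\le a_{\ell+1}$.

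Thus $(a_\ell)_{\ell\ge 1}$ is nondecreasing in $\R_+\cup\{\infty\}$, so the limit $I(\rho,\mu)=\lim_{\ell\to\infty}a_\ell$ exists (possibly $+\infty$). The main obstacle I anticipate is purely bookkeeping: making sure the radius $2k^{-1}$ in the definition of $B_k$ is exactly what is needed for the triangle-inequality computation to close (this is why the radius was chosen to be $2/k$ rather than $1/k$ — the $1/k$ contribution from $T_k$ must be absorbed), and confirming the inclusion $\N^1_{2k}\times\N^2_{2k}\subset\M^1_k\times\M^2_k$ so that part 1) of Lemma~\ref{lem:Tk-1/k-bound} applies to $(\rho',\mu')$. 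No hard analysis is involved beyond these inclusions and the two cited lemmas; compactness of $\M^1\times\M^2$ is not even needed for existence of the limit, only monotonicity.
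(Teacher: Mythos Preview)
Your proof is correct and follows essentially the same approach as the paper: use part~1) of Lemma~\ref{lem:Tk-1/k-bound} together with the triangle inequality to show that $T_{2^\ell}$ maps $B_{2^{\ell+1}}$ into $B_{2^\ell}$, then invoke Lemma~\ref{lemma:Ikkhat} to deduce $a_\ell\le a_{\ell+1}$, and conclude by monotonicity. The only cosmetic difference is that the paper manipulates the infima directly rather than passing through a near-minimizer and an $\epsilon$.
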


\begin{proof}
Let $k\in\Z_+$.  By Lemma~\ref{lem:Tk-1/k-bound}, we have that
$T_k(\rho',\mu')\in B_k$ whenever $(\rho',\mu')\in B_{2k}$.
Combined with Lemma~\ref{lemma:Ikkhat}, this proves that
\[
\inf_{(\rho',\mu')\in B_{2k}}I_{2k}(\rho',\mu')\geq
\inf_{(\rho',\mu')\in B_{2k}}I_{k}(T_k(\rho',\mu'))
\geq
\inf_{(\rho'',\mu'')\in B_{k}}I_{k}(T_k(\rho'',\mu'')).
\]
The claim now follows by monotonicity.
\end{proof}

We are now ready to prove that $k$-LD-convergence  for all $k$
implies LD-convergence. Let $\sigma_n:V_n\to [0,1]$  be an
i.i.d. uniformly random coloring of $V_n$. Recalling
Lemma~\ref{lemma:GraphConvergenceLDk}, we will assume  that
$T_k(\rho(\sigma_n),\mu(\sigma_n))$ obeys a LDP with some rate
function $I_k$. We will want to prove that
$(\rho(\sigma_n),\mu(\sigma_n))$ obeys a LDP with the rate
function $I$ defined in Corollary~\ref{cor:Ikhat}.


%

Fix $(\rho,\mu)\in \M^1\times \M^2$. By the
Lemma~\ref{lem:Tk-1/k-bound}, the fact that
$(\rho(\sigma_n),\mu(\sigma_n))\in\M_{k}^1\times \M^2_{k}$
almost surely, and the triangle inequality, we have that
\begin{align*}
\liminf_n |V_n|^{-1}&\log\pr\left((\rho(\sigma_n),\mu(\sigma_n))\in B((\rho,\mu),3k^{-1}\right)\\
&\ge \liminf_n |V_n|^{-1}\log\pr\left(T_{k}(\rho(\sigma_n),\mu(\sigma_n))\in B((\rho,\mu),2k^{-1})\right)\\
&\ge-\inf I_{k}(\rho',\mu'),
\end{align*}
where the infimum is taken over $(\rho',\mu')\in
B((\rho,\mu),2k^{-1})\cap \N^1_{k}\times \N^2_{k}$. Similarly,
\begin{align*}
\limsup_n |V_n|^{-1}&\log\pr\left((\rho(\sigma_n),\mu(\sigma_n))\in B((\rho,\mu),k^{-1})\right)\\
&\le \limsup_n |V_n|^{-1}\log\pr\left(T_{k}(\rho(\sigma_n),\mu(\sigma_n))\in B((\rho,\mu),2k^{-1})\right)\\
&\leq-\inf I_k(\rho',\mu'),
\end{align*}
where the infimum is again taken over $(\rho',\mu')\in
B((\rho,\mu),2k^{-1})\cap \N^1_{k}\times \N^2_{k}$.

 To
complete the proof we apply Proposition~\ref{prop:LDReverse} in
conjunction with Corollary~\ref{cor:Ikhat}, yielding that
$(\rho(\sigma_n),\mu(\sigma_n))$ obeys a LDP with rate function
$I$.

\subsection{Basic properties and examples}
We begin by showing that the definition of LD-convergence is
robust with respect to the equivalency relationship $\sim$ on
graphs.
\begin{theorem}\label{theorem:LDrobust}
If $\G_n$ is LD-convergent and $\tilde\G_n\sim\G_n$, then
$\tilde\G_n$ is also LD-convergent.
\end{theorem}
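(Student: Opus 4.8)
The plan is to reduce LD-convergence of $\tilde\G_n$ to that of $\G_n$ by comparing the random measures $(\rho(\sigma_n),\mu(\sigma_n))$ and $(\tilde\rho(\sigma_n),\tilde\mu(\sigma_n))$ obtained from the same i.i.d.\ uniform coloring $\sigma_n:V_n\to[0,1]$ of the common vertex set. Since the node measures depend only on $\sigma_n$ and not on the edge sets, we have $\tilde\rho(\sigma_n)=\rho(\sigma_n)$ exactly. The work is entirely in the edge measures. Because the edit distance between $\G_n$ and $\tilde\G_n$ is $o(|V_n|)$, after passing to an isomorphic copy realizing the minimum (which does not change the distribution of the random measures, as the coloring is i.i.d.\ exchangeable) the symmetric difference of the edge sets has size $\varepsilon_n|V_n|$ with $\varepsilon_n\to 0$. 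Each edge contributes mass $|V_n|^{-1}$ (counted with both orientations, so $2|V_n|^{-1}$) to $\mu$, hence the total-variation distance between $\mu(\sigma_n)$ and $\tilde\mu(\sigma_n)$ is at most $2\varepsilon_n$, \emph{deterministically}, for every realization of $\sigma_n$. The Prokhorov distance is bounded by the total-variation distance, so $d\big((\rho(\sigma_n),\mu(\sigma_n)),(\tilde\rho(\sigma_n),\tilde\mu(\sigma_n))\big)\le 2\varepsilon_n$ surely.

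Next I would transfer the LDP across this uniformly small (in fact vanishing) perturbation. The clean way, given that $\M^1\times\M^2$ is compact, is to use Proposition~\ref{prop:LDReverse}: it suffices to show that the limits in \eqref{eq:Identity1} exist and agree for $(\tilde\rho_n,\tilde\mu_n)$ at every point $(\rho,\mu)$, with the same value as the rate function $I$ for $(\rho_n,\mu_n)$. Fix $(\rho,\mu)$ and $\epsilon>0$, and let $n$ be large enough that $2\varepsilon_n\le\epsilon'$ for a chosen $\epsilon'>0$. On the event $(\rho_n,\mu_n)\in B((\rho,\mu),\epsilon)$ we have $(\tilde\rho_n,\tilde\mu_n)\in B((\rho,\mu),\epsilon+\epsilon')$, and conversely. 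Hence for such $n$,
\[
\pr\big((\rho_n,\mu_n)\in B((\rho,\mu),\epsilon)\big)
\le
\pr\big((\tilde\rho_n,\tilde\mu_n)\in B((\rho,\mu),\epsilon+\epsilon')\big)
\le
\pr\big((\rho_n,\mu_n)\in B((\rho,\mu),\epsilon+2\epsilon')\big).
\]
Taking $|V_n|^{-1}\log(\cdot)$, then $\liminf_n$ and $\limsup_n$, then $\epsilon'\to0$ and finally $\epsilon\to0$, and using that $(\rho_n,\mu_n)$ satisfies \eqref{eq:Identity1} with limit $-I(\rho,\mu)$ (from the LDP for $\G_n$ together with the remark after \eqref{eq:Identity1}), a sandwiching argument shows the corresponding double limits for $(\tilde\rho_n,\tilde\mu_n)$ exist and equal $-I(\rho,\mu)$. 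Proposition~\ref{prop:LDReverse} then yields that $(\tilde\rho_n,\tilde\mu_n)$ satisfies the LDP with rate function $I$, i.e.\ $\tilde\G_n$ is LD-convergent (with the \emph{same} rate function).

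The main thing to be careful about — rather than a deep obstacle — is the reduction to an optimally-aligned isomorphic copy: one must observe that relabeling the vertices of $\tilde\G_n$ by a fixed permutation does not change the law of $(\tilde\rho(\sigma_n),\tilde\mu(\sigma_n))$, because $\sigma_n$ is i.i.d.\ across vertices, so we may assume $|E(\G_n)\triangle E(\tilde\G_n)|=o(|V_n|)$ outright. Everything else is a soft perturbation argument: the deterministic total-variation bound on the edge measures is the key quantitative input, and compactness of $\M^1\times\M^2$ lets Proposition~\ref{prop:LDReverse} do the rest without having to re-derive the full LDP by hand. (Alternatively, one could invoke a standard "exponentially good approximation" lemma from large-deviations theory, but since the approximation here is not merely exponentially good but uniformly $o(1)$-close, the direct argument via Proposition~\ref{prop:LDReverse} is shorter and self-contained.)
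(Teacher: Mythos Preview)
Your proof is correct and the core sandwiching idea---showing that the ball probabilities for $\tilde\G_n$ are trapped between those for $\G_n$ at slightly different radii, then invoking \eqref{eq:Identity1} and Proposition~\ref{prop:LDReverse}---is exactly what the paper does. The one genuine difference is the level at which you run the argument: the paper first invokes Theorem~\ref{theorem:LDconvergenceEquivalence} to reduce to $k$-LD-convergence and then carries out the sandwiching for the quotients $(x(\sigma_n),X(\sigma_n))\in\Dk$, whereas you work directly with the random measures $(\rho(\sigma_n),\mu(\sigma_n))\in\M^1\times\M^2$ and use the deterministic total-variation (hence Prokhorov) bound coming from the edge symmetric difference. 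Your route is a bit more direct, since it bypasses the equivalence theorem entirely, and it also makes transparent why the rate function is unchanged; on the other hand, the paper's detour through $\Dk$ keeps everything in finite dimensions and avoids any discussion of total variation versus Prokhorov. Your remark about exchangeability handling the isomorphism in the definition of edit distance is a point the paper leaves implicit.
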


\begin{proof}
We apply Theorem~\ref{theorem:LDconvergenceEquivalence} and
show that if $\G_n$ is $k$-LD-convergent then $\tilde\G_n$ is
also $k$-LD-convergent. Fix any $k$, any $(x,X)\in\Dk$ and
$\epsilon>0$. We denote by $\tilde\pr$ the probability measure
associated with
 $( \tilde x(\sigma_n), \tilde X(\sigma_n))= \tilde\G_n/\sigma_n$
when $\sigma_n:V_n\rightarrow [k]$ is a random uniformly chosen
map on $V_n$. The definition of $\sim$ implies
that for all sufficiently large $n$
\begin{align*}
\tilde\pr\Big((\tilde x(\sigma_n),\tilde X(\sigma_n))\in B\left((x,X),\epsilon/2\right)\Big)\le
\pr\Big((x(\sigma_n),X(\sigma_n))\in B\left((x,X),\epsilon\right)\Big),
\end{align*}
implying
\begin{align*}
\lim_{\epsilon\rightarrow 0}\limsup_n |V_n|^{-1}&\log\tilde\pr\Big((\tilde x(\sigma_n),\tilde X(\sigma_n))\in B\left((x,X),\epsilon/2\right)\Big) \\
&\le \lim_{\epsilon\rightarrow 0}\limsup_n |V_n|^{-1}\log\pr\Big((x(\sigma_n),X(\sigma_n))\in B\left((x,X),\epsilon\right)\Big).
\end{align*}
Similarly, we establish that
\begin{align*}
\lim_{\epsilon\rightarrow 0}\liminf_n |V_n|^{-1}&\log\tilde\pr\Big((\tilde x(\sigma_n),\tilde X(\sigma_n))\in B\left((x,X),\epsilon\right)\Big) \\
&\ge \lim_{\epsilon\rightarrow 0}\liminf_n |V_n|^{-1}\log\pr\Big((x(\sigma_n),X(\sigma_n))\in B\left((x,X),\epsilon/2\right)\Big).
\end{align*}
Since $\G_n$ is $k$-LD-convergent, then from
(\ref{eq:Identity1}) we have
\begin{align*}
\lim_{\epsilon\rightarrow 0}\limsup_n |V_n|^{-1}\log\tilde\pr(\cdot)=\lim_{\epsilon\rightarrow 0}\liminf_n |V_n|^{-1}\log\tilde\pr(\cdot).
\end{align*}
Thus the same identity applies to $\tilde\pr$. Applying
Proposition~\ref{prop:LDReverse} we conclude that $\tilde\pr$
is LD-convergent.
\end{proof}

Let us give some examples of LD-convergent graph sequences. As
our first example, consider a sequence of graphs which consists
of a
 disjoint union of copies of a fixed
graph.

\begin{ex}\label{theorem:UnionConverging}
Let $\G_0$ be a fixed graph and let $\G_n$ be a disjoint union
of $n$ copies of $\G_0$.  Then $\G_n$ is LD-convergent.
\end{ex}

\begin{proof}
We use Theorem~\ref{theorem:LDconvergenceEquivalence} and prove
that $\G_n$ is $k$-LD-convergent for every $k$. Fix $k$, let
$\Sigma_k$ be the space of all possible $k$-colorings of
$\G_0$, and let $M=k^{|V(\G_0)|}$ denote the size of $\Sigma_k$.
Every $k$-coloring $\sigma_n:V_n\rightarrow [k]$
of the nodes of $\G_n$
can then be encoded as
$\sigma_n=(\sigma_i^n)_{1\le i\le n}$, where $\sigma_i^n\in \Sigma_k$ is a $k$-coloring
of the $i$-th copy of $\G_0$ in $\G_n$. Consider the $M$-dimensional
simplex $S_M$, i.e., the set of vectors
$(z_1,\ldots,z_{M})$ such that $\sum_{i\le M}z_i=1,
z_i\ge 0$. For every $m=1,\ldots,M$ and
$\sigma_n=(\sigma_i^n)_{1\le i\le n}$, let $z_m(\sigma_n)$ be
the number of $\sigma_i^n$-s which are equal to the $m$-element
of $\Sigma_k$ divided by $n$, and let
$z(\sigma_n)=(z_m(\sigma_n))_{ 1\le m\le M}\in
S_{M}$. By Sanoff's Theorem, the sequence $z(\sigma_n),
n\ge 1$ satisfies the LDP with respect to the metric space
$S_{M}$. Now consider a natural mapping from
$S_{M}$ into $\Dk$, where each $z=(z_m)_{1\leq m\le M}$ is
mapped into $(x,X)\in \Dk$ as follows.
 For each element of $\Sigma_k$ encoded by
some $m\le M$ and each $i\le k$, let $x(i,m)$ be the
number of nodes of $\G_0$ colored $i$ according to $m$, divided
by $|V(\G_0)|$. In particular $\sum_ix(i,m)=1$. Similarly, let
$X(i,j,m)$ be $2$ times the number of edges in $\G_0$ with end node
colors $i$ and $j$ according to $m$, divided by $|V(\G_0)|$. In
particular $\sum_{i,j}X(i,j,m)=2|E(\G_0)|/|V(\G_0)|$. Setting
$x_i=\sum_m x(i,m)z_m, X_{i,j}=\sum_m X(i,j,m)z_m$, this
defines the mapping $S_{M}$ into $\Dk$. This mapping is
continuous. Observe that the composition $\sigma_n\rightarrow
z(\sigma_n)\rightarrow (x,X)$ is precisely the construction of
the factor graph $\F=\G_n/\sigma_n$ via (\ref{k-quotient}).
Applying the Contraction Principle, we obtain LD-convergence of
the sequence $\G_n$.
\end{proof}

Perhaps a more interesting example is the case of
subgraphs of the $d$-dimensional lattice $\Z^d$, more precisely
the case of graph sequences $L_{d,n}$ with vertex sets
$V_{d,n}=\{-n,-n+1,\dots, n\}^d$.
It is known that the free energy of statistical mechanics
systems on these graphs has a
limit~\cite{SimonLatticeGases},
\cite{GeorgyGibbsMeasure}. In
our terminology this means that the sequence $(L_{d,n})$ is
right-convergent (see Section~\ref{section:OtherConvergence}).
Here we show that this sequence is also LD-convergent.

\begin{ex}\label{theorem:Lattice}
Let $d\geq 1$, let $V_{d,n}=\{-n,-n+1,\dots, n\}^d$, let
$E_{n,d}$ be the set of pairs $\{x,y\}\subset V_{d,n}$ of
$\ell_1$ distance $1$, and let $L_{d,n}=(V_{d,n},E_{d,n})$.
Then the sequence $(L_{d,n})$ is LD-convergent.
\end{ex}

\begin{proof}
We again apply Theorem~\ref{theorem:LDconvergenceEquivalence}
and establish $k$-LD-convergence for every $k$. Fix $(x,X)\in
\Dk$ and let
\begin{align*}
\bar I_k(x)=-\lim_{\delta\rightarrow 0}\limsup_n {\log \pr\Bigl((x(\sigma_n),X(\sigma_n))\in B((x,X),\delta)\Bigr)\over |V_{d,n}|},
\end{align*}
where $\sigma_n:V_{d,n}\rightarrow [k]$ is chosen uniformly at
random. The limit $\lim_{\delta\rightarrow 0}$ exists by
monotonicity. Fix an arbitrary $\delta>0$. Then again by
monotonicity
\begin{align*}
\limsup_n {\log \pr\Bigl((x(\sigma_n),X(\sigma_n))\in B((x,X),\delta/2)\Bigr)\over |V_{d,n}|}\ge -\bar I_k(x,X),
\end{align*}
and we can find $n_0$ so that
\begin{align}\label{eq:barIkdelta}
{\log \pr\Big((x(\sigma_{n_0}),X(\sigma_{n_0}))\in B((x,X),\delta/2)\Bigr)\over |V_{d,n_0}|}\ge -\bar I_k(x,X)-\delta.
\end{align}
Consider arbitrary $n\ge n_0$, and set
$q=\lfloor\frac{2n+1}{2n_0+1}\rfloor$, $M=q^d$ and
$m=(2n+1)^d-M(2n_0+1)^d$. The set of vertices $V_{d,n}$ can
then be written as a disjoint union of $M$ copies of
$V_{d,n_0}$ and $m$ sets consisting of one node each. Let
$\Hg_1,\ldots,\Hg_M$ be the induced subgraphs on the copies of
$V_{d,n_0}$, and let $\tilde L_{d,n}=(\tilde V_{d,n},\tilde
E_{d,n})$ be the union of $\Hg_1,\ldots,\Hg_M$. Then
\[|\tilde V_{d,n}|=|V_{d,n}|-m =|V_{d,n}|(1-O(n_0/n))
\]
and
\[
|\tilde E_{d,n}|=
|E_{d,n}|-O(Mn_0^{d-1})- O(m)
=|E_{d,n}|-O(n^d n_0^{-1})-O(n^{d-1}n_0).
\]

%
A uniformly random coloring $\sigma_n:V_{d,n}\rightarrow [k]$
then induces  a uniformly random coloring $\tilde
\sigma_n:\tilde V_{d,n}\to [k]$. Due to the above bounds, the
corresponding quotients
$L_{d,n}/\sigma_n=(x(\sigma_n),X(\sigma_n))$ and $\tilde
L_{d,n}/\tilde \sigma_n=(\tilde x(\tilde\sigma_n),\tilde
X(\tilde\sigma_n))$ are close to each other.  More precisely,
%
\begin{align*}
d\Bigl((x(\sigma_n),X(\sigma_n))\,,\,(\tilde x(\tilde \sigma_n),\tilde X(\tilde\sigma_n))\Bigr)= O(n_0/n)+O(1/n_0).
\end{align*}
Then we can find $n_0$ large enough so that
(\ref{eq:barIkdelta}) still holds, such that for all large
enough $n$,
\begin{align*}
d\Bigl((x(\sigma_n),X(\sigma_n))\,,\,(\tilde x(\tilde \sigma_n),\tilde X(\tilde\sigma_n))\Bigr)\le \delta/2.
\end{align*}
Then
\begin{align*}
\pr\left((x(\sigma_n),X(\sigma_n)\in B((x,X),\delta))\right)
\ge \pr\left((\tilde x(\tilde \sigma_n),\tilde X(\tilde \sigma_n))\in B((x,X),\delta/2)\right).
\end{align*}
On the other hand, since $\tilde X_{i,j}(\tilde\sigma_n)$ are
counted over a disjoint union of graphs identical to
$L_{d,n_0}$, then if  $\tilde\sigma_n$ is such that within each
graph $\Hg_r$ we have $d((x,X), (\tilde x(\tilde
\sigma_n),\tilde X(\tilde\sigma_n)))\le \delta/2$, then the
same applies to the overall graph $\tilde L_{d,n}$. Namely,
\begin{align*}
\pr\left((\tilde x(\tilde\sigma_n),\tilde X(\tilde\sigma_n))\in B(x,\delta/2)\right)
\ge
\Big(\pr\left((x(\sigma_{n_0}),X(\sigma_{n_0}))\in B(x,X,\delta/2)\right)\Big)^{ M},
\end{align*}
where the second expectation is with respect to uniformly
random coloring of $L_{d,n_0}$. Since $M\le
|V_{d,n}|/|V_{d,n_0}|$, we  obtain
\begin{align*}
{1\over |V_{d,n}|}\log \pr\Bigl(x({\sigma_n}), X(\sigma_n))\in B(x,\delta)\Bigr)
&\ge {1\over |V_{d,n_0}|}\log\pr\Bigl((x(\sigma_{n_0}),X(\sigma_{n_0}))\in B(x,X,\delta/2)\Bigr)\\
&\ge -\bar I_k(x,X)-\delta,
\end{align*}
where the second inequality follows from (\ref{eq:barIkdelta}).
Since this holds for all large enough $n$, then from the bound
above we obtain
\begin{align*}
\lim_{\delta\rightarrow 0}\liminf_n{1\over |V_{d,n}|}\log \pr\Bigl((x(\sigma_n),X(\sigma_n))\in B((x,X),\delta)\Bigr)\ge -\bar I_k(x,X).
\end{align*}
We conclude that the relation (\ref{eq:Identity1}) holds.
\end{proof}

\section{Other notions of convergence and comparison with  LD-convergence}\label{section:OtherConvergence}
In this section we consider other types of convergence
discussed in the earlier literature and compare them with
LD-convergence. Later we will show that every other mode of
convergence, except for colored-neighborhood-convergence, is
implied by  LD-convergence. We begin with the notion of
left-convergence.

\subsection{Left-convergence}

We start with the definition of left-convergence as
introduced in \cite{BorgsChayesKahnLovasz}:
\begin{Defi}\label{defi:leftGraph-converging}
A sequence of graphs $\G_n$ with uniformly bounded degrees is
left-convergent if the limit
\begin{align}\label{eq:Left-limit}
\lim_{n\rightarrow\infty}|V_n|^{-1}\hom(\Hg,\G_n)
\end{align}
exists for every connected, finite graph $\Hg$.
\end{Defi}

As already noted in  \cite{BorgsChayesKahnLovasz}, this notion
is equivalent to the notion
 of
Benjamini-Schramm convergence of the sequence $\G_n=(V_n,E_n)$.
To define the latter,  consider a fixed positive integer $r$,
and a rooted graph $\Hg$ of diameter at most $r$.  Define
$p_n(\Hg,r)$ to be the probability that the $r$-ball around a
vertex $U$ chosen uniformly at random from $V_n$ is isomorphic
to $\Hg$. The sequence is called Benjamini-Schramm convergent
if the limit $p(\Hg,r)=\lim_n  p_n(\Hg,r)$ exists for all $r$
and all routed graphs $\Hg$ of diameter $r$ or less.


Recall our definition of relation $\sim$ between graph
sequences. It is immediate that if $\G_n$ is left-convergent
and $\tilde\G_n\sim\G_n$ then $\tilde\G_n$ is also
left-convergent with the same set of limits $p(\Hg,r)$. Thus
left-convergence can be defined on the equivalency classes of
graph sequences.

\subsection{Right-convergence}\label{subsection:right-convergence}
For sequences of bounded degree graphs, the notion of
right-convergence for a fixed target graph $\Hg$ was defined
in~\cite{BorgsChayesKahnLovasz}. In this paper, we define
right-convergence without referring to a particular target graph, and
require instead the existence of limits for all soft-core
graphs $\Hg$. Recall that for a weighted graph $\Hg$ with
vertex and edge weights given by the vector
$\alpha=(\alpha_1(\Hg),\dots,\alpha_k(\Hg))$ and the matrix
$A=(A_{ij}(\Hg))_{1\leq i,j\leq k}$, respectively,  the
homomorphism number $\hom(\G,\Hg)$ is given by
(\ref{eq:WeightedHom}). Without
loss of generality we assume $\alpha_i(\Hg)>0$ for all $i$. We
say that $\Hg$ is soft-core if also $A_{ij}(\Hg)>0$ for all
$i,j$.

To motivate our definition of right-convergence, let us note
that for a soft-core graph $\Hg$ on $k$ nodes,
\[
\alpha_{\min}^{(D+1)|V_n|}
\leq
\alpha_{\min}^{|V_n|+|E_n|}
\leq
k^{-|V_n|}\hom(\G_n,\Hg)
\leq
\alpha_{\max}^{|V_n|+|E_n|}
\leq
\alpha_{\max}^{(D+1)|V_n|}
\]
where
\begin{equation}\label{eq:alphamax}
\alpha_{\max}=\max\left(1,\max\alpha_i,\max A_{ij}\right)
\qquad\text{and}\qquad
\alpha_{\min}=\min\left(1,\min\alpha_i,\min A_{ij}\right).
\end{equation}
Thus  $\log \hom(\G_n,\Hg)$ grows linearly with $|V_n|$.  We
will define a sequence to  be right-convergent if the
coefficient of proportionality converges for all soft-core
graphs.

\begin{Defi}\label{defi:right-graphs converging}
A graph sequence $\G_n$ is defined to be right-convergent if
for every soft-core graph $\Hg$, the limit
\begin{align}\label{eq:LimitPartitionFunctin}
-f(\Hg)\triangleq \lim_{n\rightarrow\infty}{\log \hom(\G_n,\Hg)\over |V_n|},
\end{align}
exists.
\end{Defi}
\noindent In the language of statistical physics, the quantity
$\hom(\G,\Hg)$ is usually called the partition function of the
soft-core model with interaction $\Hg$ on $\G$, and the
quantity $f(\Hg)$ is called its free energy.
Sometimes we will write $f_{(\G_n)}(\Hg)$ instead of $f(\Hg)$
to emphasize the dependence on the underlying graph sequence
$G_n$.

 Observe that our definition
is robust with respect to the equivalency notion $\sim$ on
graph sequences. Namely if $\tilde\G_n\sim\G_n$, then the
limits (\ref{eq:LimitPartitionFunctin}) are the same.
Indeed,  deleting or adding one edge in $\G_n$ increases the
partition function  by a factor at most  $\max A_{ij}$ and
decreases it by a factor at most $\min A_{ij}$. Thus
adding/deleting $o(|V_n|)$ edges changes the partition
function of a soft-core model by a factor of $\exp(o(|V_n|)$.

By contrast, the partition function of a hard-core model
can be quite sensitive to adding or deleting edges, as we
already demonstrated in the introduction, where we discussed
the case of cycles. The same observation can be extended to
the case when $\G_n$ is a $d$-dimensional cylinder or torus,
again parity of $n$ determining the two-colorability property,
see~\cite{BorgsChayesKahnLovasz} for details. An even more
trivial example is the following: let $\G_n$ be a graph on $n$
isolated node (no edges) when $n$ is even, and $n-2$ isolated
nodes plus one edge when $n$ is odd. Then there exists
$1$-coloring of $\G_n$ if and only if $n$ is even, again
leading to the conclusion that $\G_n$ is not  converging on all
simple graphs $\Hg$.

%
%

We adapted Definition~\ref{defi:right-graphs converging}
to avoid these pathological examples.  Nevertheless,  we still
would like to be able to define the limits
(\ref{eq:LimitPartitionFunctin}) for hard-core graphs $\Hg$,
and do it in a way which is insensitive to changing from $\G_n$
to some equivalent sequence $\tilde\G_n\sim\G_n$. There are two
ways to achieve this which are equivalent, as we establish
below. Given $\lambda>0$ and  a (not necessarily
soft-core graph) $\Hg$ with edge weights given by a matrix
$A=(A_{ij}(\Hg))$, let
$A_\lambda$ be the  matrix with positive
entries
\[
(A_\lambda)_{ij}=\max\{\lambda,A_{ij}(\Hg)\}
\]
and let $\Hg_\lambda$ be the corresponding soft-core graph.

\begin{theorem}\label{theorem:RightConvergenceEquivalence}
Given a right-convergent sequence $\G_n$ and a weighted graph
$\Hg$, the following limit exists
\begin{align}\label{eq:hepsilon}
f_{(\G_n)}(\Hg)\triangleq \lim_{\lambda\rightarrow 0} f_{(\G_n)}(\Hg_\lambda).
\end{align}
Furthermore, there exists a graph sequence $\tilde\G_n\sim
\G_n$ such that for every graph $\Hg$
\begin{align}\label{eq:HardSoftEqual}
-f_{(\G_n)}(\Hg)=\lim_n{\log\hom(\tilde\G_n,\Hg)\over |V_n|}\ge
\sup_{\hat\G_n\sim\G_n}\limsup_n{\log\hom(\hat\G_n,\Hg)\over |V_n|},
\end{align}
where the supremum is over all graphs sequences $\hat\G_n$
which are equivalent to $\G_n$. In particular, the maximizing
graph sequence $\tilde\G_n$ exists  and can be chosen in such a
way that the $\limsup$ is actually a limit.
\end{theorem}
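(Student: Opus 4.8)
The plan is to prove the two assertions separately. For the existence of the limit \eqref{eq:hepsilon}, the key observation is monotonicity: if $0<\lambda'<\lambda$, then $(A_{\lambda'})_{ij}=\max\{\lambda',A_{ij}\}\le \max\{\lambda,A_{ij}\}=(A_\lambda)_{ij}$ entrywise, so $\hom(\G_n,\Hg_{\lambda'})\le \hom(\G_n,\Hg_\lambda)$ for every $n$, and hence $f_{(\G_n)}(\Hg_{\lambda'})\ge f_{(\G_n)}(\Hg_\lambda)$. Since $\G_n$ is right-convergent, each $f_{(\G_n)}(\Hg_\lambda)$ is a well-defined real number (as $\Hg_\lambda$ is soft-core), and the family is monotone in $\lambda$ and bounded below: from \eqref{eq:WeightedHom}, $\hom(\G_n,\Hg_\lambda)\ge \hom(\G_n,\Hg)\ge 0$ always, but more usefully one can lower-bound by restricting to the single constant map that minimizes, giving $\hom(\G_n,\Hg_\lambda)\ge \alpha_{\min}(\Hg_1)^{|V_n|}\cdot(\text{something})$ — in any case the two-sided bounds displayed just before Definition~\ref{defi:right-graphs converging} applied to $\Hg_1$ (say) show $f_{(\G_n)}(\Hg_\lambda)$ stays in a fixed compact interval for $\lambda\le 1$. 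A bounded monotone net of reals converges, so $\lim_{\lambda\to 0}f_{(\G_n)}(\Hg_\lambda)$ exists. This defines $f_{(\G_n)}(\Hg)$.

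For the second assertion I would build $\tilde\G_n$ by a diagonal/interleaving construction. Fix a countable family of weighted graphs $\Hg^{(1)},\Hg^{(2)},\dots$ that is "rich enough" — e.g., all weighted graphs with rational edge weights on a bounded number of nodes, so that controlling $\hom(\cdot,\Hg^{(m)})$ for all $m$ controls $\hom(\cdot,\Hg)$ for all $\Hg$ by a continuity/approximation argument. For each fixed $m$ and each $\lambda>0$, right-convergence gives $f_{(\G_n)}(\Hg^{(m)}_\lambda)=\lim_n |V_n|^{-1}\log\hom(\G_n,\Hg^{(m)}_\lambda)$. The idea is: adding a sublinear number of edges to $\G_n$ cannot decrease $\hom(\G_n,\Hg^{(m)})$ by more than a factor $\exp(o(|V_n|))$ when restricted to maps $\phi$ that already use only "soft" edges of $\Hg^{(m)}$, and one can try to choose the equivalent sequence $\tilde\G_n$ so that the hard-core-forbidden maps are effectively suppressed optimally. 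Concretely, I would argue that for each $m$, $\sup_{\hat\G_n\sim\G_n}\limsup_n |V_n|^{-1}\log\hom(\hat\G_n,\Hg^{(m)})$ is attained in the limit $\lambda\to 0$ of the soft relaxation, i.e. equals $-f_{(\G_n)}(\Hg^{(m)})$: the inequality $\le$ because $\hom(\hat\G_n,\Hg^{(m)})\le \hom(\hat\G_n,\Hg^{(m)}_\lambda)$ and $f_{(\hat\G_n)}(\Hg^{(m)}_\lambda)=f_{(\G_n)}(\Hg^{(m)}_\lambda)$ by $\sim$-robustness of right-convergence (noted after Definition~\ref{defi:right-graphs converging}), then let $\lambda\to 0$; the inequality $\ge$ because one can realize a near-optimal $\hat\G_n$ by deleting the $o(|V_n|)$ edges that obstruct the best coloring at scale $\lambda$. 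Then a standard diagonalization over $m$ produces a single sequence $\tilde\G_n\sim\G_n$ for which $\lim_n |V_n|^{-1}\log\hom(\tilde\G_n,\Hg^{(m)})$ exists and equals $-f_{(\G_n)}(\Hg^{(m)})$ for every $m$, hence (by density/continuity) $\lim_n |V_n|^{-1}\log\hom(\tilde\G_n,\Hg)= -f_{(\G_n)}(\Hg)$ for every weighted $\Hg$, and this limit dominates the $\limsup$ over all equivalent sequences by the $\le$ direction above.

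The main obstacle I expect is the $\ge$ direction: showing that the supremum over equivalent sequences of $\limsup_n |V_n|^{-1}\log\hom(\hat\G_n,\Hg)$ is actually achieved (or approached) by the soft-relaxation value $-f_{(\G_n)}(\Hg)$, and doing so simultaneously for all $\Hg$ via one sequence $\tilde\G_n$. One needs a quantitative statement of the form: given $\lambda>0$ and a coloring $\phi$ of $\G_n$ achieving weight close to $\hom(\G_n,\Hg_\lambda)$, only a sublinear number of edges $(u,v)$ of $\G_n$ have $A_{\phi(u)\phi(v)}(\Hg)=0$ — this is where the bounded-degree hypothesis and the fact that $f_{(\G_n)}(\Hg_\lambda)\to f_{(\G_n)}(\Hg_{0^+})$ is finite must be combined to show that the "hard" edges carry negligible exponential weight, so they can be deleted to pass from $\Hg_\lambda$ back to $\Hg$ at cost $\exp(o(|V_n|))$. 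Packaging this uniformly in $n$ and then diagonalizing over a countable dense family of target graphs, while keeping the edit distance to $\G_n$ sublinear, is the technical heart; the rest is the monotone-convergence bookkeeping above.
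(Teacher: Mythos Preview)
Your overall architecture matches the paper's proof: monotonicity in $\lambda$ for the existence of \eqref{eq:hepsilon}; the upper bound $\limsup_n |V_n|^{-1}\log\hom(\hat\G_n,\Hg)\le -f_{(\G_n)}(\Hg)$ via $\hom(\hat\G_n,\Hg)\le\hom(\hat\G_n,\Hg_\lambda)$ together with $\sim$-robustness of the soft-core limit; a lower bound by deleting a small set of ``bad'' edges; and a final diagonalization over a countable family of target graphs. All of this is exactly what the paper does.

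There is, however, a genuine gap in your lower-bound sketch. You propose to take a single coloring $\phi$ ``achieving weight close to $\hom(\G_n,\Hg_\lambda)$'' and delete its bad edges. But no single term can be close to the full sum: the best you can say is $\alpha_\lambda(\phi^*)\ge k^{-|V_n|}\hom(\G_n,\Hg_\lambda)$ for the maximizing $\phi^*$. Deleting the bad edges of $\phi^*$ and using only that one term then yields
\[
\frac{1}{|V_n|}\log\hom(\tilde\G_n,\Hg)\ge \frac{1}{|V_n|}\log\tilde\alpha(\phi^*)\ge -\log k + \frac{1}{|V_n|}\log\hom(\G_n,\Hg_\lambda),
\]
which in the limit gives $-f_{(\G_n)}(\Hg)-\log k$, not $-f_{(\G_n)}(\Hg)$. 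The factor $\lambda^{-r}$ does not rescue you, since you only have an \emph{upper} bound on $r$. The $\log k$ loss is fatal for the equality in \eqref{eq:HardSoftEqual}.

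The paper's fix is a double pigeonhole that preserves a \emph{sum} of colorings rather than a single one. First, stratify by $r=|E_0(\sigma)|$ and find $r_0$ with $\sum_{\sigma:|E_0(\sigma)|=r_0}\alpha_\lambda(\sigma)\ge |E_n|^{-1}\hom(\G_n,\Hg_\lambda)$; the bound $\alpha_\lambda(\sigma)\le\lambda^{r_0}\alpha_{\max}^{(D+1)|V_n|}$ combined with the trivial count $k^{|V_n|}$ forces $r_0\le\hat\lambda|V_n|$ for $\lambda$ chosen small relative to $\epsilon$. Second, pigeonhole over the $\binom{|E_n|}{r_0}$ possible bad-edge sets to find one set $E_0$ with $\sum_{\sigma:E_0(\sigma)=E_0}\alpha_\lambda(\sigma)\ge\binom{|E_n|}{r_0}^{-1}|E_n|^{-1}\hom(\G_n,\Hg_\lambda)$. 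Since $r_0\le\hat\lambda|V_n|$, the binomial is only $\exp(o_\epsilon(1)|V_n|)$, so after deleting $E_0$ the \emph{entire} family $\{\sigma:E_0(\sigma)=E_0\}$ contributes to $\hom(\tilde\G_n,\Hg)$ with no $\log k$ loss. This is the quantitative statement you flagged as the ``technical heart''; your intuition that finiteness of $f_{(\G_n)}(\Hg)$ forces few bad edges is exactly right, but it must be applied at the level of the stratified sum, not a single coloring.

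Two minor points: your countable dense family must range over all $k$, not a ``bounded number of nodes''; and for the first part you need not argue boundedness---a monotone real-valued net converges in $\R\cup\{+\infty\}$, and the paper allows $f_{(\G_n)}(\Hg)=+\infty$.
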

The intuition of the definition above is that we define
$f_{(\G_n)}(\Hg)$ by slightly softening the "non-edge"
requirement of zero elements of the weight matrix $A$. This
turns out to be equivalent to the possibility of removing
$o(|V_n|)$ edges in the underlying graph in trying to achieve
the largest possible limit within the equivalency classes of
graph sequences. One can further give a statistical physics
interpretation of this definition as defining the free energy at
zero temperature as a limit of free energies at positive
temperatures.

\begin{proof}
The existence of the limit (\ref{eq:hepsilon}) follows
immediately by monotonicity: note that $\hom(\G,\Hg)$ is
monotonically non-decreasing in every element of the weight
matrix $A$.

The proof of the second part is more involved, even
though the intuition behind it is again simple: First, one
easily shows that monotonicity in $\lambda$ and the fact that
the limit \eqref{eq:LimitPartitionFunctin} is not changed when
we change $o(|V_n|)$ edges implies that for every
$\tilde\G_n\sim\G_n$
\begin{align}\label{eq:limsuptildeGn}
\limsup_n{\log \hom(\tilde\G_n,\Hg)\over |V_n|}\le -f_{(\G_n)}(\Hg).
\end{align}
To prove that $f_{(\G_n)}(\Hg)$ is achieved asymptotically by
some $\tilde\G_n\sim \G_n$ we may assume without loss of
generality that $f_{(\G_n)}(\Hg)<\infty$, since otherwise the
identity holds trivially. Consider a ``typical configuration''
$\sigma:V_n\rightarrow V(\Hg_\lambda)$ contributing to
$\hom(\G_n,\Hg_\lambda)$, and let $E_0(\sigma)$ be the set of
edges $\{u,v\}\in E_n$ such that
$A_{\sigma(u),\sigma(v)}(\Hg)=0$,
\begin{equation}
\label{E_0-def}
E_{0}(\sigma)=\{(u,v)\in E_n: A_{\sigma(u),\sigma(v)}(\Hg)=0\}.
\end{equation}
For small $\lambda$, we expect the size of this set to grow
slowly with $|V_n|$, since otherwise $\hom(\G_n,\Hg_\lambda)$
would become too small to be consistent with $\lim_{\lambda\to
0}f_{(\G_n)}(\Hg_\lambda)<\infty$. We might therefore hope that
we can find a subset $E_{0,n}\subset E_n$ such that (i)
$|E_{0,n}|=o(|V_n|)$ and (ii) removing the set of edges
$E_{0,n}$ from $E_n$ leads to a graph sequence $\tilde G_n$
such that $|V_n|^{-1}\log \hom(\tilde\G_n,\Hg)$ is close to
$-f_{(\G_n)}(\Hg)$.

It will require a little bit of work to make this rather vague
argument precise. Before doing so, let us give the
 proof of \eqref{eq:limsuptildeGn}, which is much simpler:
Indeed, by monotonicity,  for for every $\lambda>0$ we have
\begin{align*}
\limsup_n{\log \hom(\tilde\G_n,\Hg)\over |V_n|}&\le \limsup_n{\hom(\tilde\G_n,\Hg_\lambda)\over |V_n|} \\
&=\limsup_n{\log \hom(\G_n,\Hg_\lambda)\over |V_n|}\\
&=\lim_n{\log \hom(\G_n,\Hg_\lambda)\over |V_n|} \\
&=-f_{(\G_n)}(\Hg_\lambda),
\end{align*}
where the first equality follows since $\Hg_\lambda$ is a
soft-core graph. Passing to the limit $\lambda\rightarrow
0$, we obtain \eqref{eq:limsuptildeGn}, which in turn
implies
\begin{align*}
\sup_{\tilde\G_n\sim\G_n}\limsup_n{\log \hom(\tilde\G_n,\Hg)\over |V_n|}\le -f_{(\G_n)}(\Hg).
\end{align*}

We now show that $f_{(\G_n)}(\Hg)$ is achieved asymptotically
by some $\tilde\G_n\sim \G_n$.
Our main technical result leading to the claim is as follows.

\begin{lemma}\label{lemma:Gnhatepsilon}
For every $\epsilon\in(0,1)$, every graph $\Hg$, and
for all large enough $n$, there exists a graph $\tilde \G_n$
which is obtained from $\G_n$ by deleting at most
$\epsilon|V_n|$ edges such that
\begin{align}\label{eq:Hgplusepsilon}
\hom(\tilde \G_n,\Hg)&\ge
\exp\left(-(f_{(\G_n)}(\Hg)+\epsilon)|V_n|\right).
\end{align}
\end{lemma}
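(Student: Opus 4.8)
The plan is to make precise the heuristic that for small $\lambda$, a ``typical'' homomorphism $\sigma:V_n\to V(\Hg_\lambda)$ contributing to $\hom(\G_n,\Hg_\lambda)$ uses only $o(|V_n|)$ edges on which $A(\Hg)$ vanishes, so that deleting those edges recovers almost all the weight of $\hom(\G_n,\Hg)$. First I would fix $\epsilon\in(0,1)$ and choose $\lambda$ small enough that $f_{(\G_n)}(\Hg_\lambda)$ is within, say, $\epsilon/4$ of $f_{(\G_n)}(\Hg)$ (possible by \eqref{eq:hepsilon}); we may assume $\lambda<1$ so that $\lambda\le \alpha_{\min}(\Hg_\lambda)$-type bounds hold, and also that $\lambda$ is small relative to all nonzero entries of $A(\Hg)$. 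Then, for $n$ large, $\hom(\G_n,\Hg_\lambda)\ge \exp(-(f_{(\G_n)}(\Hg)+\epsilon/2)|V_n|)$.

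Next I would set up a weighted counting / first-moment estimate. For a map $\sigma:V_n\to V(\Hg_\lambda)=V(\Hg)$, write $w(\sigma)$ for its weight in $\hom(\G_n,\Hg_\lambda)$ and $w_0(\sigma)$ for its weight in $\hom(\G_n,\Hg)$ (the latter possibly $0$). The key inequality is $w(\sigma)\le w_0(\sigma)\,(\lambda/\alpha^*)^{|E_0(\sigma)|}$ is false in general, so instead I would compare the $\Hg_\lambda$-weight of $\sigma$ to the weight of $\sigma$ in the graph $\G_n$ with $E_0(\sigma)$ removed: on that graph $\sigma$ is a genuine $\Hg$-homomorphism, and
\[
w_{\Hg_\lambda}(\sigma)=w_{\Hg,\,\G_n\setminus E_0(\sigma)}(\sigma)\cdot \lambda^{|E_0(\sigma)|}.
\]
Split the sum $\hom(\G_n,\Hg_\lambda)=\sum_\sigma w_{\Hg_\lambda}(\sigma)$ according to whether $|E_0(\sigma)|\le \epsilon|V_n|$ or not. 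For the ``bad'' part, bound $w_{\Hg,\,\G_n\setminus E_0(\sigma)}(\sigma)\le \alpha_{\max}^{(D+1)|V_n|}$ (with $\alpha_{\max}$ as in \eqref{eq:alphamax} for $\Hg$), and there are at most $|V(\Hg)|^{|V_n|}$ maps, so the bad part is at most $\big(|V(\Hg)|\,\alpha_{\max}^{D+1}\,\lambda^{\epsilon}\big)^{|V_n|}$; choosing $\lambda$ additionally small enough that this is strictly smaller than $\tfrac12\exp(-(f_{(\G_n)}(\Hg)+\epsilon/2)|V_n|)$ for large $n$, the bad part is negligible. Hence for large $n$ the ``good'' part, restricted to $\sigma$ with $|E_0(\sigma)|\le \epsilon|V_n|$, already contributes at least $\tfrac12\exp(-(f_{(\G_n)}(\Hg)+\epsilon/2)|V_n|)$.

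Finally, among the good $\sigma$'s there are only $\binom{|E_n|}{\le \epsilon|V_n|}\le 2^{|E_n|}=\exp(O(|V_n|))$ possible values of the edge set $E_0(\sigma)$; actually I only need to pigeonhole so that some single set $F\subset E_n$ with $|F|\le\epsilon|V_n|$ captures a $\exp(-o(|V_n|))$ fraction of the good weight — more simply, the number of subsets of $E_n$ of size at most $\epsilon|V_n|$ is at most $(e/\epsilon)^{\epsilon|V_n|}$ by a standard binomial bound, which is $\exp(o(|V_n|))$ if we additionally shrink $\epsilon$; but since $\epsilon$ is fixed I instead absorb this factor by having chosen the slack $\epsilon/2$ vs.\ $\epsilon$ generously and noting $(e/\epsilon)^{\epsilon}<e^{\epsilon/4}$ fails for fixed $\epsilon$ — so more carefully, I would run the whole argument with $\epsilon$ replaced by $\epsilon'=\epsilon/C$ for a suitable constant and only at the end enlarge back. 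Then some fixed $F=E_{0,n}$ with $|E_{0,n}|\le\epsilon|V_n|$ satisfies: the maps $\sigma$ with $E_0(\sigma)\subset E_{0,n}$ carry $\exp(-o(|V_n|))$ times the good weight, and each such $\sigma$ is a genuine $\Hg$-homomorphism of $\tilde\G_n:=\G_n\setminus E_{0,n}$ of weight $\ge \lambda^{|E_{0,n}|}\cdot(\text{its }\Hg_\lambda\text{-weight})\cdot\lambda^{-|E_0(\sigma)|}\ge$ its $\Hg_\lambda$-weight; summing gives $\hom(\tilde\G_n,\Hg)\ge \exp(-(f_{(\G_n)}(\Hg)+\epsilon)|V_n|)$ for large $n$, which is \eqref{eq:Hgplusepsilon}.

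The main obstacle is the bookkeeping in the pigeonhole/union step: the number of candidate ``violated-edge sets'' is exponential in $|V_n|$ (of order $(e/\epsilon)^{\epsilon|V_n|}$), and this must be shown to be a subexponential correction only after the target accuracy is loosened, so the argument has to be organized with an auxiliary small parameter (prove the lemma for $\epsilon/C$, conclude for $\epsilon$) rather than directly; getting the quantifiers on $\lambda$, $n$, and the two copies of $\epsilon$ in the right order is the delicate part.
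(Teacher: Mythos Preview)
Your approach is correct and is essentially the same as the paper's: soften to $\Hg_\lambda$, show that configurations with many violated edges are negligible via the factor $\lambda^{|E_0(\sigma)|}$, pigeonhole to a single violated-edge set $E_0$, and delete it. The paper organizes the combinatorics slightly differently---it first pigeonholes on the exact value $r_0=|E_0(\sigma)|$ (losing only a polynomial factor $|E_n|^{-1}$), then proves $r_0\le\hat\lambda|V_n|$ by your $\lambda^{r_0}$ argument, then pigeonholes on the set $E_0$ of size $r_0$---but this is only a rearrangement of your good/bad split.

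On the obstacle you flag: your diagnosis is right but your proposed fix ``replace $\epsilon$ by $\epsilon/C$ for a suitable constant'' is not quite the correct quantifier. The paper introduces the auxiliary threshold $\hat\lambda$ from the start, subject to $\hat\lambda(3+\log D)+\hat\lambda\log(1/\hat\lambda)<\epsilon$, which is exactly what is needed so that $\binom{D|V_n|}{\hat\lambda|V_n|}\le(eD/\hat\lambda)^{\hat\lambda|V_n|}\le e^{\epsilon|V_n|}$; then $\lambda$ is chosen small relative to $\hat\lambda$. Note this requires $\hat\lambda\log(1/\hat\lambda)\ll\epsilon$, so $\hat\lambda$ must be chosen \emph{nonlinearly} small in $\epsilon$, not merely $\epsilon/C$ for an absolute constant. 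With that adjustment (threshold at $\hat\lambda|V_n|$, still deleting at most $\hat\lambda|V_n|<\epsilon|V_n|$ edges), your argument goes through exactly as written.
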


We first show that this lemma implies the required claim. It
relies on a standard diagonalization argument. Note that it
suffices to establish the result for graphs $\Hg$ with rational
(possibly zero) weights. Let $\Hg_1,\Hg_2,\ldots$ be an
arbitrary enumeration of graphs with rational weights. For
every $m=1,2,\ldots~$, we find large enough $n(m)$ such that
for $n\ge n(m)$,
the claim in lemma holds for $\epsilon=1/m^2$ and for all
$\Hg=\Hg_1,\ldots,\Hg_m$. Namely, for every
$n\ge n(m)$, graphs $\tilde\G_{n,1},\ldots,\tilde\G_{n,m}$ can
be obtained from the graph $\G_n$ each be deleting at most
$n/m^2$ of edges in $\G_n$ such that (\ref{eq:Hgplusepsilon})
holds for $\epsilon=1/m^2$. Without  loss of generality we
may assume that $n(m)$ is strictly increasing in $m$. Let
$\tilde E_{n,i}$ be the set of edges deleted from $\G_n$ to
obtain $\G_{n,i}$ for $1\le i\le m$ and $\tilde\G_n^m$ be the
graph obtained from $\G_n$ by deleting the edges
in $\cup_{1\le i\le
m}E_{n,i}$. The number of deleted edges is at most
$m(n/m^2)=n/m$. By monotonicity, (\ref{eq:Hgplusepsilon})
implies that
\begin{align*}
\hom(\tilde \G_n^m,\Hg_i)&\ge
\exp\left(-(f_{(\G_n)}(\Hg_i)+1/m^2)|V_n|\right), \qquad i=1,2,\ldots,m.
\end{align*}
We now construct the graph sequence $\tilde G_n$. For all
$n<n(1)$ we simply set $\tilde G_n=\G_n$. For all other $n$, we
find a unique $m_n$ such that $n(m_n)\le n<n(m_n+1)$ and set
$\tilde\G_n=\tilde\G_n^{m_n}$. Then $\tilde\G_n\sim\G_n$ since
the number of deleted edges is at most
$|V_n|/m_n=o(|V_n|)$  as $n\rightarrow\infty$. For this
sequence we have
\begin{align*}
|V_n|^{-1}\log\hom(\tilde \G_n,\Hg_i)&\ge
-f_{(\G_n)}(\Hg_i)-1/m_n^2, \qquad i=1,2,\ldots,m_n.
\end{align*}
Fixing $i$ and taking $\liminf_n$ of each side we obtain for
each $i$
\begin{align*}
\liminf_n|V_n|^{-1}\log\hom(\tilde \G_n,\Hg_i)&\ge
-f_{(\G_n)}(\Hg_i).
\end{align*}
Combining with (\ref{eq:limsuptildeGn}) we conclude that for
each $i$
\begin{align*}
\lim_n|V_n|^{-1}\log\hom(\tilde \G_n,\Hg_i)&=
-f_{(\G_n)}(\Hg_i).
\end{align*}
This completes the proof of the theorem.
\end{proof}

\begin{proof}[Proof of Lemma~\ref{lemma:Gnhatepsilon}]
For the ease of exposition we write $f(\Hg)$ for
$f_{(\G_n)}(\Hg)$. If $f(\Hg)=\infty$, there is nothing to
prove, so we assume $f(\Hg)<\infty$.

Let
\begin{align*}
\alpha_\lambda(\sigma)\triangleq \prod_{u\in V_n}\alpha_{\sigma(u)}\prod_{(u,v)\in E_n}\max(A_{\sigma(u),\sigma(v)},\lambda)
\end{align*}
and let $\Sigma(E)$ be the set of $\sigma:V_n\to[k]$ such that
$E_0(\sigma)=E$.  Then
\begin{align*}
\hom(\G_n,\Hg_\lambda)=
\sum_{E\subset E_n}
\sum_{\sigma\in\Sigma(E)}\alpha_\lambda(\sigma).
\end{align*}
We will prove that for $\lambda$ sufficiently small and $n$
sufficiently large, we can find a set $E_{0}\subset E_n$ such
that $|E_0|\le \epsilon |V_n|$ and
\begin{equation}
\sum_{\sigma\in\Sigma(E_{0})}\alpha_\lambda(\sigma)\ge
\exp\left(-(f(\Hg)+\epsilon)|V_n|\right).
\label{eq:E0}
\end{equation}
Note that this bound immediately implies the claim of the
lemma.  Indeed, let $\tilde\G_n$ be the graph obtained from
$\G_n$ by deleting the edges in $E_{0}$ and assume with out
loss of generality that $\lambda$ is smaller than the smallest
non-zero entry of $A$. Then every $\sigma\in \Sigma(E_{0})$
satisfies
$\alpha_\lambda(\sigma)=\lambda^{|E_0|}\tilde\alpha(\sigma)$,
where $\tilde\alpha$ is the weight with respect to the graph
$\tilde \G_n$, vector $\alpha$ and the matrix $A$. This implies
\begin{align*}
\hom(\tilde \G_n,\Hg)&\ge \sum_{\sigma\in\Sigma(E_0)}\tilde\alpha(\sigma)
=\lambda^{-|E_0|}\sum_{\sigma\in\Sigma(E_0)}\alpha_\lambda(\sigma) \\
&\ge \sum_{\sigma\in\Sigma(E_0)}\alpha_\lambda(\sigma)
\geq \exp\left(-(f(\Hg)+\epsilon)|V_n|\right),
\end{align*}
as required. The proof of the lemma is therefore reduced to the
proof of \eqref{eq:E0}.

Given $0<\epsilon<1$ we chose  $0<\hat\lambda<\epsilon$  such
that
\begin{align}\label{eq:deltahat}
3\hat\lambda+\hat\lambda\log D+\hat\lambda\log(1/\hat\lambda)^{-1}<\epsilon,
\end{align}
and  $0<\lambda<\hat\lambda$  such that
\begin{align}\label{eq:deltaepsilon}
{f(\Hg)+2+(D+1)\log\alpha_{\max}+\log k\over \log(\lambda^{-1})}<\hat\lambda.
\end{align}
Here we note that $-f(\Hg)\le \log k+(D+1)\log\alpha_{\max}$, and thus the numerator in the
left-hand side is positive. Let $n_0=n_0(\lambda)$ be large
enough so that
\[
\log
(D|V_n|) \le \lambda|V_n|
\quad\text{and}\quad
\hom(\G_n,\Hg_\lambda)\ge \exp\left((-f(\Hg_\lambda)-\lambda)|V_n|\right)
\]
for all $n\geq n_0(\lambda)$.  Note that by monotonicity in
$\lambda$, the second bound implies that
\begin{align}
\hom(\G_n,\Hg_\lambda)&\ge \exp\left((-f(\Hg)-\lambda)|V_n|\right). \label{eq:n0}
\end{align}

 Fix an arbitrary such $n\geq n_0$, and let
$\Sigma(r)$ be the set of mappings $\sigma:V_n\rightarrow
V(\Hg)$ such that precisely $r$ edges of $V_n$ are mapped into
non-edges of $\Hg$,
\[
\Sigma(r)=\{\sigma:V_n\rightarrow
V(\Hg)\colon
\left| E_0(\sigma)\right|=r\}.
\]
For all $\lambda$ which are smaller than the smallest positive
element of $A$ we have
\begin{align*}
\hom(\G_n,\Hg_\lambda)=\sum_{0\le r\le |E_n|} \sum_{\sigma\in \Sigma(r)}\alpha_\lambda(\sigma).
\end{align*}
Let $r_0$ be such that
\begin{align}
\sum_{\sigma\in \Sigma(r_0)}\alpha_\lambda(\sigma)&\ge |E_n|^{-1}\hom(\G_n,\Hg_\lambda).
\label{eq:r0}
\end{align}
 We
claim that $ r_0\le \hat\lambda |V_n| $. Indeed, since for
every $\sigma\in\Sigma(r_0)$ we have $\alpha_\lambda(\sigma)\le
{ \lambda^{r_0}}\alpha_{\max}^{|V_n|+|E_n|} \le {
\lambda^{r_0}}\alpha_{\max}^{(D+1)|V_n|}$, it follows from
(\ref{eq:r0})  that
\begin{align*}
{k}^{|V_n|}\lambda^{r_0}\alpha_{\max}^{(D+1)|V_n|}
\ge |E_n|^{-1}\hom(\G_n,\Hg_\lambda)
\ge \exp\left((-f(\Hg)-2\lambda)|V_n|\right),
\end{align*}
where the second inequality follows from (\ref{eq:n0})  and the
fact that for $n\geq n_0$, we have  $|E_n|\leq D|V_n|\leq
e^{\lambda |V_n|}$.
 Rearranging, we get that
\begin{align*}
r_0\le (\log(1/\lambda))^{-1}\Big(|V_n|\big(f(\Hg)+{2}\lambda+{ (D+1)}\log\alpha_{\max}+\log { k}\big)\Big).
\end{align*}
Applying (\ref{eq:deltaepsilon}), this in turn implies that
\begin{align}
r_0&\le (\log\lambda^{-1})^{-1}|V_n|\left(f(\Hg)+2\lambda+(D+1)\log\alpha_{\max}+\log k\right)\notag \\
&\leq (\log\lambda^{-1})^{-1}|V_n|\left(f(\Hg)+2+(D+1)\log\alpha_{\max}+\log k\right)\notag\\
&\leq \hat\lambda |V_n|\label{eq:bound2 r_0},
\end{align}
as claimed.


Next we observe that
\begin{align*}
\sum_{\sigma\in \Sigma(r_0)}\alpha_\lambda(\sigma)=\sum_{E\subset E_n:\atop |E|=r_0}
\sum_{\sigma\in\Sigma(E)}\alpha_\lambda(\sigma).
\end{align*}
Since the number of subsets of the edges of $\G_n$ with
cardinality $r_0$ is  ${|E_n| \choose r_0}$, we can find a
subset $E_0\subset E_n, |E_0|=r_0$ such that
\begin{align}
\sum_{\sigma\in\Sigma(E_0)}\alpha_\lambda(\sigma)&\ge {|E_n| \choose r_0}^{-1}\sum_{\sigma\in \Sigma(r_0)}\alpha_\lambda(\sigma) \notag
\\
&\ge
 {|E_n| \choose r_0}^{-1}\exp\left((-f(\Hg)-2\lambda)|V_n|\right)\label{E0-bd}
\end{align}
where the second inequality follows from (\ref{eq:r0}).   Next,
we note that $ {N\choose k} \leq \Bigl(\frac{Ne}k\Bigr)^k, $ a
fact which can easily be proved by induction on $k$. Combining
this bound with the fact that $|E_n|\leq D|V_n|$ and the bounds
\eqref{eq:bound2 r_0} and \eqref{eq:deltahat}, we get
\begin{align*}
{|E_n| \choose r_0}\le {D|V_n| \choose r_0}\le {D|V_n| \choose {\hat\lambda} |V_n|}
\le \left(\frac {eD}{\hat\lambda}\right)^{\hat\lambda |V_n|}
= e^{((1+\log D)\hat\lambda +\hat\lambda\log(\hat\lambda^{-1}))|V_n|}
\leq e^{(\epsilon-2\hat\lambda)|V_n|}.
\end{align*}
Combined with \eqref{E0-bd} this implies the bound
\eqref{eq:E0}.
\end{proof}

\subsection{Partition-convergence and colored-neighborhood-convergence}
\label{subsection:partitionConvergence}

The notions of partition-convergence and
colored-neighborhood-convergence were introduced in Bollobas and
Riordan~\cite{BollobasRiordanMetrics}. Recall the notion of a
graph quotient $\F=\G/\sigma$ defined for every graph $\G$ and
mapping $\sigma:V(\G)\rightarrow [m]$. Then $\F$ is a weighted
graph on $m$ nodes with node and edge weights
$(x(\sigma),X(\sigma))\in \Dm$ defined by (\ref{k-quotient}).
When $\G$ is the $n$-th element of a graph sequence $\G_n$, we
will use notations $x_n(\sigma)$ and $X_n(\sigma)$.
Let $\Sigma_n(m)$ be the set of all pairs
$(x_n(\sigma),X_n(\sigma))$, when we vary over all maps
$\sigma$. As such $\Sigma_n(m)\in\mathcal{P}\left(\Dm\right)$,
where $\mathcal{P}(A)$  is the set of closed subsets of $A$.
For every set $A\subset\Dm$, let $A^\epsilon$ be the set of
points in $\Dm$ with distance at most $\epsilon$ to $A$ with
respect to the $\Lnorm_\infty$ norm on $\Dm$ (the actual choice
of metric is not relevant). Define a metric $\rho_m$ on closed
sets in $\mathcal{P}\left(\Dm\right)$ via
\begin{align}\label{eq:DugunjiMetric}
\rho(A,B)=\inf_{\epsilon>0}\{B\subset A^\epsilon,A\subset B^\epsilon\}.
\end{align}

\begin{Defi}
A graph sequence $\G_n$ is partition-convergent if for every
$m$, the sequence $\Sigma_n(m), n\ge 1$ is a Cauchy
sequence in the metric space $\mathcal{P}(\Dm)$.
\end{Defi}
It is known that $\mathcal{P}(\Dm)$ is a compact (thus closed)
metric space, which implies that  $\Sigma_n(m)$ has a
limit $\Sigma(m)\in \mathcal{P}(\Dm)$.
 Loosely speaking $\Sigma(m)$ describes the limiting set of
graph quotients achievable through the coloring of nodes by $m$
different colors. Thus, if for example some pair $(x,X)$ belong
to $\Sigma(m)$, this means that there is a sequence of
partitions of $\G_n$ into $m$ colors such that the normalized
number of nodes colored $i$ converges to $x_i$ for each $i$,
and the normalized number of  edges between colors $i$ and $j$
converges to $X_{i,j}$ for each $i,j$.

The definition above raises the question
whether  partition-convergence implies convergence of
neighborhoods, namely left-convergence.
We will show in the next section that this is not the case:
partition-convergence does not imply left-convergence (whereas,
as we mentioned above, right-convergence does imply the
left-convergence). In order to address this issue Bollobas and
Riordan extended their definition
of partition-convergence \cite{BollobasRiordanMetrics}
to a
richer  notion called
\emph{colored-neighborhood-convergence}. This notion was
further studied by Hatami, Lov{\'a}sz and
Szegedi~\cite{HatamiLovaszSzegedy} under the name
\emph{local-global} convergence, to stress the fact that this
notion captures both global and local properties of the
sequence $\G_n$.  We prefer the original name, given that both
right and LD-convergence capture local and global properties as
well.

To define  colored-neighborhood-convergence, we fix positive
integers $m$ and $r$. Let $\mathcal{H}_{m,r}\subset\mathcal{H}$
denote the finite set of all rooted colored graphs with radius
at most $r$ and degree at most $\Delta$, colored using colors
$1,\ldots,m$. For every graph $\G$ with degree $\le\Delta$ and
every node $u\in V(\G)$,  every coloring
$\sigma:V(\G)\rightarrow [m]$ produces an element $\Hg\in
\mathcal{H}_{m,r}$, which is the colored $r$-neighborhood
$B_\G(u,r)$ of $u$ in $\G$. For each $\Hg\in
\mathcal{H}_{m,r}$, let $N(\G,m,r,\sigma,\Hg)$ be the number of
nodes $u$ such that there exists a color matching isomorphism
from $B_\G(u,r)$ to $\Hg$. Namely, it is the number of times
that $\Hg$ appears as a colored neighborhood $B_\G(u,r)$ when
we vary $u$. Let $\pi(\G,m,r,\sigma)$ be the vector
$\left(|V(\G)|^{-1}N(\G,m,r,\sigma,\Hg), \Hg\in
\mathcal{H}_{m,r}\right)$. Namely, it is the vector of
frequencies of observing graphs $\Hg$, as neighborhoods
$B_\G(u,r)$ when we vary $u$. Then for every $m$, we obtain a
finite and therefore closed set $\pi(\G,m,r)\subset
[0,1]^{|\mathcal{H}_{m,r}|}$
 when we vary over all $m$-colorings $\sigma$ in
vectors  $\pi(\G,m,r,\sigma)$.  As such,
$\pi(\G,m,r)\in\mathcal{P}([0,1]^{|\mathcal{H}_{m,r}|})$. We
consider $\mathcal{P}([0,1]^{|\mathcal{H}_{m,r}|})$ as a metric
space with metric given by (\ref{eq:DugunjiMetric}).

\begin{Defi}\label{Defi:Local-GlobalConvergence}
A graph sequence $\G_n$ is defined to be
colored-neighborhood-convergent if the sequence $\pi(\G_n,m,r)$ is convergent in
$\mathcal{P}([0,1]^{|\mathcal{H}_{m,r}|})$ for every $m,r$.
\end{Defi}

It is not too hard to see that colored-neighborhood-convergence
implies partition-convergence and left-convergence. As we show
in the next section, the converse does not hold for either
partition- or left-convergence.

\section{Relationship between different notions of convergence}\label{section:ConvergenceRelations}
In this section we explore the relations between different
notions of convergence. As we will see, most of the definitions
are non-equivalent and many of them are not comparable in
strength. Our findings are summarized in the following theorem.

\begin{theorem}\label{theorem:ConvergenceRelations}
The notions of left, right, partition, colored-neighborhood and
LD-convergence satisfy the relations described on
Figures~\ref{figure:ConvergenceRelations} and
\ref{figure:ConvergenceRelationsColorNeighb}, where the solid
arrow between $A$ and $B$ means type $A$ convergence implies
type $B$ convergence, and the striped arrow between $A$ and $B$
means type $A$ convergence does not imply  type $B$
convergence.
\end{theorem}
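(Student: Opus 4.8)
The plan is to prove Theorem~\ref{theorem:ConvergenceRelations} by establishing each arrow (solid or striped) in the two figures separately, grouping them into three blocks: (i) the implications that follow directly from the machinery already developed in this paper, (ii) the implications that are known from the literature, and (iii) the non-implications, which require explicit counterexamples. For block (i), the key inputs are Theorem~\ref{theorem:LDconvergenceEquivalence} (equivalence of LD- and $k$-LD-convergence) together with Remark~\ref{cor:Ik-from-I}. From the rate functions $I_k$ one reads off the limiting quotient sets $\mathcal S_k = \{(x,X)\in\Dk : I_k(x,X)<\infty\}$, and a short argument using the Prokhorov-to-variation comparison in Lemma~\ref{lem:Tk-1/k-bound} shows $\Sigma_n(k)\to\mathcal S_k$ in the metric $\rho$, giving \emph{LD-convergence $\Rightarrow$ partition-convergence}. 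For \emph{LD-convergence $\Rightarrow$ right-convergence}, I would use Varadhan's lemma: for a soft-core graph $\Hg$ on $k$ nodes, $|V_n|^{-1}\log\hom(\G_n,\Hg)$ can be written, after conditioning on the $k$-coloring $t_k(\sigma_n)$, as $|V_n|^{-1}\log\E[\exp(|V_n|\Phi(T_k(\rho_n,\mu_n)))]+\log k$ for a bounded continuous functional $\Phi$ of the empirical quotient (the log of the per-vertex and per-edge weights summed against the $\N_k$-measures), so the LDP plus Varadhan gives the limit $-f(\Hg)=\sup_{(x,X)\in\Dk}(\Phi(x,X)-I_k(x,X))+\log k$. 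Right-convergence then implies left-convergence by the result of \cite{BorgsChayesKahnLovasz} quoted in the introduction.

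For block (ii), I would simply cite: left-convergence $\Leftrightarrow$ Benjamini--Schramm convergence (noted after Definition~\ref{defi:leftGraph-converging}); colored-neighborhood-convergence $\Rightarrow$ partition-convergence and $\Rightarrow$ left-convergence (noted after Definition~\ref{Defi:Local-GlobalConvergence}, and in \cite{BollobasRiordanMetrics,HatamiLovaszSzegedy}); and the right-not-implies-left direction is subsumed by the counterexamples below since our soft-core right-convergence already implies left-convergence.

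The substance is block (iii), the striped arrows, and this is where I expect the real work. I would construct the following counterexamples. \textbf{Partition-convergence does not imply left-convergence:} take $\G_n$ to alternate (along the subsequence) between two graph families with identical quotient sets $\Sigma_n(m)$ for every $m$ but distinct local neighborhood statistics — for instance, for $n$ even a union of long cycles $C_n$ and for $n$ odd a union of long paths together with a few extra vertices to fix the vertex count, or more robustly disjoint unions of $C_3$'s versus a disjoint union of a single long cycle; one checks that for each $m$ the achievable $(x(\sigma),X(\sigma))$ converge to the same limit set (all of these are $2$-regular or nearly so, so the edge-weight constraints $\sum_{i,j}X_{ij}=2$ plus the marginals determine $\Sigma_n(m)$ up to $o(1)$), while $p_n(\Hg,r)$ oscillates (a path has degree-$1$ vertices, a cycle does not). \textbf{Right-convergence does not imply partition-convergence:} here I would invoke a sequence that is right-convergent (for all soft-core $\Hg$) yet whose quotient sets oscillate; a natural candidate is a sequence built from expanders with a controlled number of edges — since soft-core free energies are continuous under edit distance (Theorem~\ref{theorem:RightConvergenceEquivalence} and the remarks preceding it) and for random-like regular graphs the free energy limit depends only on the degree, one picks two $D$-regular sequences with the same free energies but genuinely different quotient sets (e.g. bipartite versus non-bipartite Ramanujan graphs, distinguished at $m=2$ by whether a near-balanced bipartition with few crossing edges exists). \textbf{Left/partition do not imply right, and colored-neighborhood does not imply right:} use the example from \cite{BorgsChayesKahnLovasz} referenced in Section~\ref{sec:left-not-right}.

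The main obstacle will be verifying the counterexamples rigorously — in particular, showing that the two families in each construction have the \emph{same} limiting quotient sets (or the same free energies) while differing in the other invariant. This requires either an explicit computation of $\Sigma_n(m)$ for bounded-degree graphs with prescribed local structure, or an appeal to concentration/expansion estimates to pin down the set of achievable $(x,X)$; getting the quantitative bounds to match across the sequence, uniformly in $m$, is the delicate point. The implications in block (i), by contrast, are essentially formal once Varadhan's lemma and the contraction principle are in hand, and I would present those first so the figures' solid arrows are all discharged before turning to the counterexamples.
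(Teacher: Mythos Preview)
Your positive implications (block~(i)) are essentially what the paper does: the paper proves LD $\Rightarrow$ right by a bare-hands covering argument that is exactly Varadhan's lemma unpacked, and LD $\Rightarrow$ partition by showing $\Sigma_n(k)\to\{I_k<\infty\}$, just as you outline. Block~(ii) is also correctly cited.

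The counterexamples in block~(iii), however, have real problems. For \emph{right $\not\Rightarrow$ partition}, your proposal of bipartite versus non-bipartite $D$-regular (Ramanujan) graphs fails: these two families do \emph{not} have the same soft-core free energies. Indeed, the paper's own argument for left $\not\Rightarrow$ right shows that $\lim_n|V_n|^{-1}\mathrm{MaxCut}(\G_n)$ is determined by the soft-core free energies, and $\mathrm{MaxCut}$ equals $\tfrac12 D|V_n|$ for bipartite $D$-regular graphs but is strictly smaller (by $\Theta(\sqrt D)|V_n|$) for random $D$-regular graphs. So your two sequences are distinguished by a soft-core $\Hg$ and cannot both be right-convergent to the same limit. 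The paper instead uses a different mechanism: take a single right-convergent expander sequence $\G_m$ and alternate between $\G_m$ and $\G_m\sqcup\G_m$; free energies are trivially equal (additivity over disjoint union), while the disjoint union admits a balanced cut with zero crossing edges and the expander does not.

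For \emph{partition $\not\Rightarrow$ left}, your suggested pairs also break. Paths versus cycles fail because the degree-$1$ vertices in paths change the constraint $\sum_j X_{ij}=2x_i$ to something else, so the quotient sets differ. Disjoint $C_3$'s versus a long cycle fail at $m=2$: a triangle cannot be properly $2$-colored, so for the $C_3$-sequence every $2$-coloring has $X_{11}+X_{22}>0$, whereas a long even cycle achieves $X_{11}=X_{22}=0$ with $x_1=x_2=\tfrac12$; hence $\Sigma_n(2)$ differ. The paper's example (disjoint $4$-cycles versus disjoint $6$-cycles) is chosen precisely so that both families are $2$-regular \emph{and} have enough even-cycle flexibility to realize every $(x,X)$ satisfying $\sum_j X_{ij}=2x_i$.

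Finally, the paper does \emph{not} claim colored-neighborhood $\not\Rightarrow$ right; this is stated as an open question. Your proposed example (the alternating random regular / random bipartite regular sequence) is not colored-neighborhood-convergent anyway, since bipartite graphs admit a proper $2$-coloring and generic $D$-regular graphs do not, so $\pi(\G_n,2,1)$ oscillates.
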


We have deliberately described the relations in two figures,
since at this stage we do not know how LD and
colored-neighborhood-convergence are related to each other. As
a step towards this it would be interesting to see whether
colored-neighborhood-convergence implies  right-convergence. We
will discuss this question along with some other open questions
in Section~\ref{sec:discussion}.


Note that it was already proved in~\cite{BorgsChayesKahnLovasz} that right-convergence implies
left-convergence, and that it is obvious that
colored-neighborhood-convergence implies partition and
left-con\-ver\-gence.  To prove the Theorem, it will therefore be
enough to show that LD-convergence implies both right and
partition-con\-ver\-gence,  that left-convergence does not imply
right-convergence, that partition-convergence does not imply
left-convergence, and that right-convergence does not imply
partition-convergence (all other arrows are implied).  We will
start with the negative implications, which are all proved by
giving counter-examples.

\begin{figure}
\begin{center}
\scalebox{.6}{\ingraph{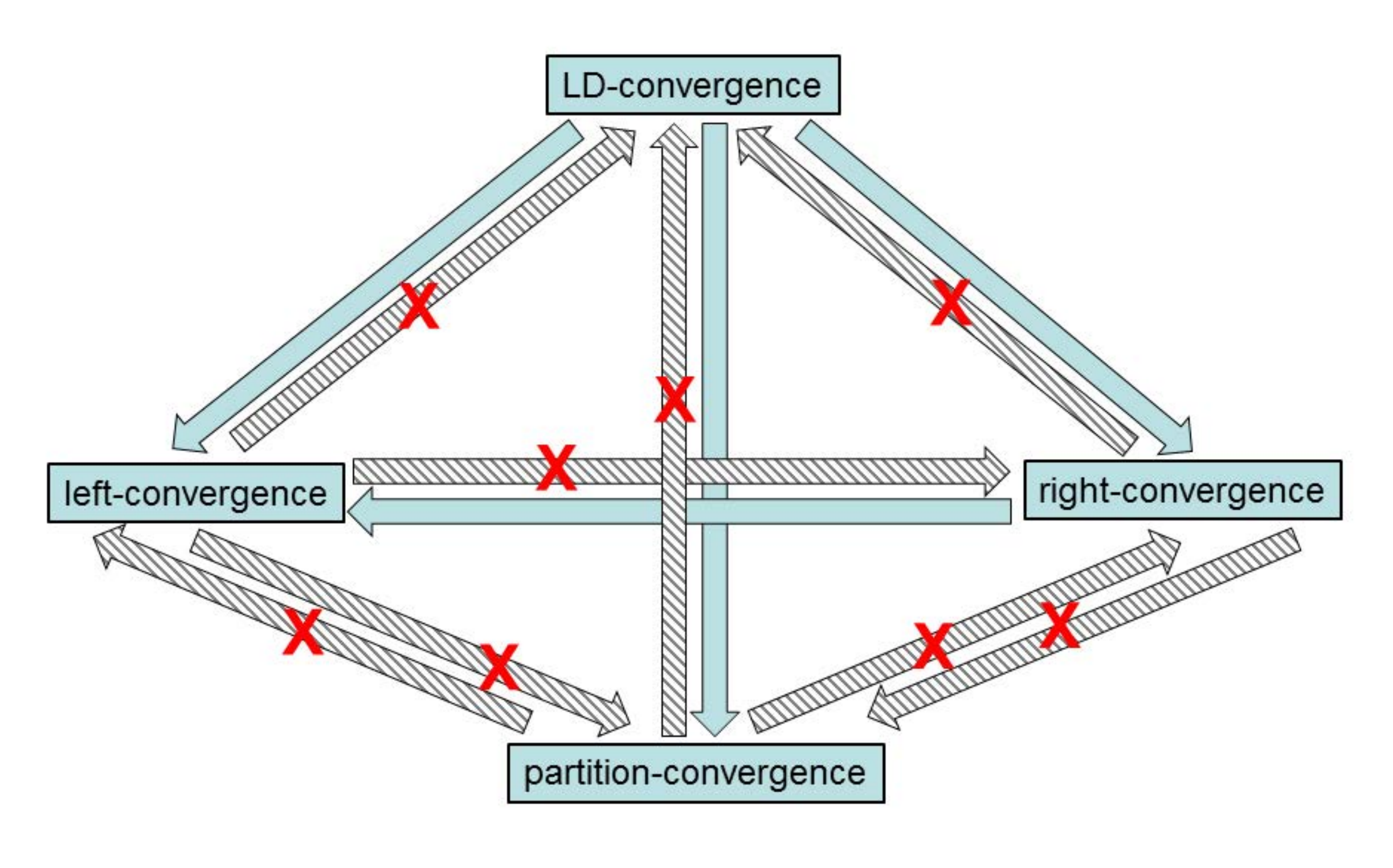}}
\caption{Implications between LD and other notions of convergence}\label{figure:ConvergenceRelations}
\end{center}
\end{figure}
\begin{figure}
\begin{center}
\scalebox{.6}{\ingraph{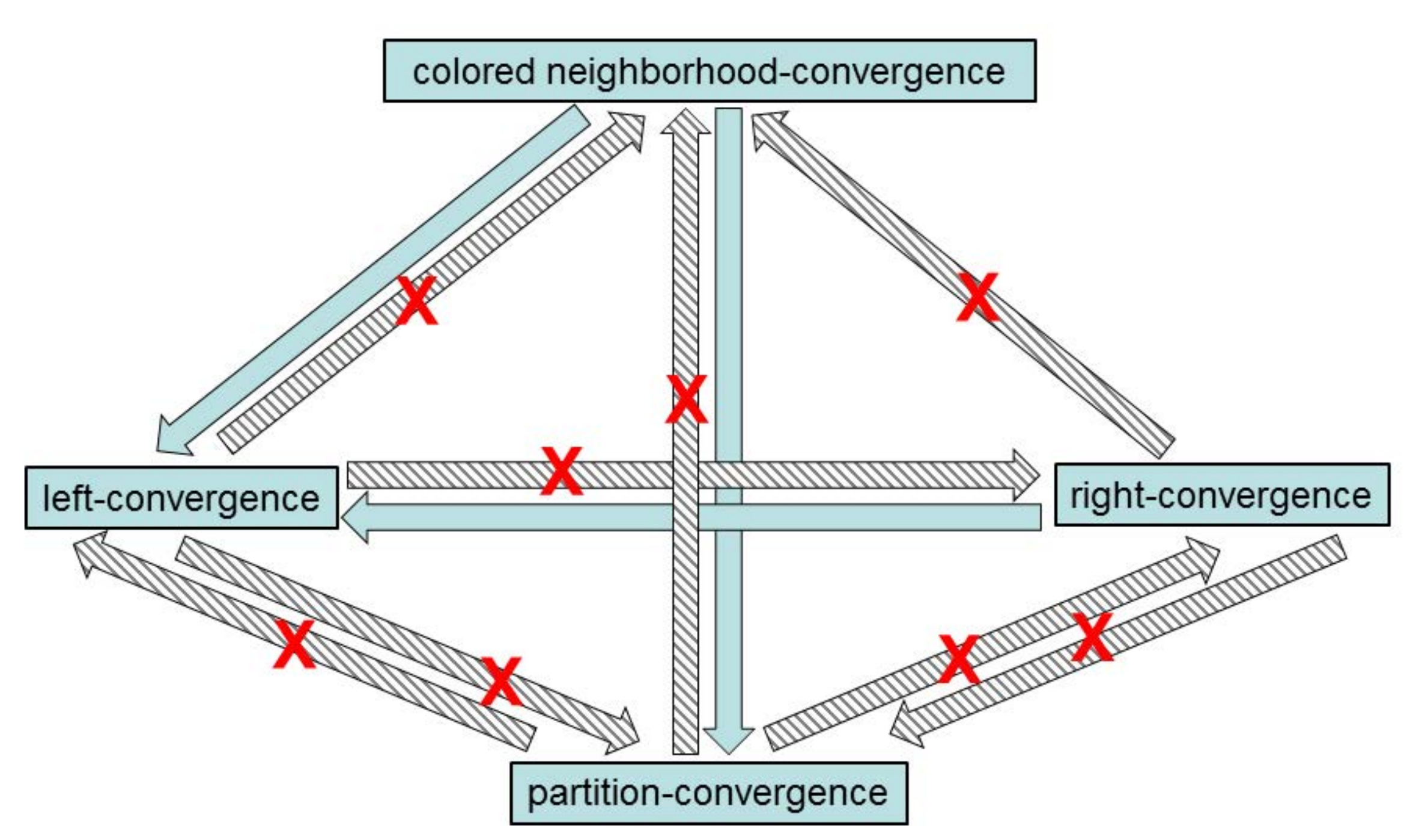}}
\caption{Implications between colored-neighborhood and other notions of convergence}\label{figure:ConvergenceRelationsColorNeighb}
\end{center}
\end{figure}

\subsection{Left-convergence does not imply right-convergence}
\label{sec:left-not-right}

As discussed earlier left-convergence does not imply
right-convergence on all hard-core graphs $\Hg$.  But our
modified definition of right-convergence involving only
soft-core graphs raises the question whether with our new
definition, left-convergence does imply right-convergence. It
turns out that this is not the case since right-convergence
implies  convergence of certain global properties like the
convergence of the maximal cut in a graph, a property which is
not captured by the notion of left-convergence.

Indeed, let $\G_n$ be right-convergent,  let
$\text{MaxCut}(\G_n)$  denotes the size of the maximal cut in
$\G_n$, and let $\Hg$ be a graph consisting of a single edge
with weights $\alpha=(1,1)$ and  $A_{1,2}=A_{2,1}=e^\beta,
A_{1,1}=A_{2,2}=1$.  Then
\[
e^{\beta\text{MaxCut}(\G_n)}
\leq \hom(\G_n,\Hg)
\leq 2^ne^{\beta\text{MaxCut}(\G_n)}.
\]
Sending $\beta\to\infty$, we see that right-convergence implies
convergence of $\frac 1{|V(\G_n)|}\text{MaxCut}(\G_n)$.

As an immediate consequence, we obtain that left-convergence
does not implies right-con\-ver\-gence.

\begin{ex}[\cite{BorgsChayesKahnLovasz}]
\label{Lem:left-not-right} Let $\Delta$ be a positive integer,
let $\G(n,\Delta)$  denote a random $\Delta$-regular graph
on $n$ nodes, and let $B_{n,\Delta}$ denote a random
$\Delta$-regular bi-partite graph on $n$ nodes.  Let $\G_n$ be
equal to $\G(n,\Delta)$ when $n$ is odd and equal to
$B_{n,\Delta}$  when $n$ is even.  Then $\G_n$ is
left-convergent for all $\Delta$, but for $\Delta$ sufficiently
large, it is not right-convergent.
\end{ex}

\begin{proof}
$\G_n$ is left-convergent since for every $r$, with high
probability $B(U,r)$ is a $\Delta$-Kelly (regular) tree
truncated at depth $r$, where $U$ is a random uniformly chosen
node in $\G_n$. We now show that $G_n$ is not right-convergent
for large $\Delta$. Indeed,
$\text{MaxCut}(\G_n)=\text{MaxCut}(B_{n,\Delta})=\frac 12\Delta
n $ for even $n$, while
$\text{MaxCut}(\G_n)=\text{MaxCut}(\G(n,\Delta))=\frac 14\Delta
n +O(\sqrt\Delta n)$ for odd $n$ \cite{BCP}.  This implies that
for large $\Delta$, the sequence $\G_n$ is not right-convergent.
\end{proof}

\subsection{Right-convergence does not imply partition-convergence}

In order to construct our counter example, we need the
following lemma. We recall that the edge expansion constant of
a sequence of graphs $\G_n$ is the largest constant $\gamma$
such that the number of edges between $W$ and $V(\G_n)\setminus
W$ is larger than $\gamma |W|$ whenever $|W|\leq |V(\G_n)|/2$.

\begin{lemma}
Let $\Delta\geq 3$.  Then there exists a $\gamma>0$ and a
right-convergent sequence of expanders $\G_n$ with edge
expansion constant $\gamma$ and maximal degree bounded by
$\Delta$.
\end{lemma}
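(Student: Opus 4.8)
The goal is to produce, for every $\Delta\ge 3$, a single graph sequence $\G_n$ of maximum degree at most $\Delta$ that is simultaneously (i) an expander sequence with a uniform edge-expansion constant $\gamma>0$ and (ii) right-convergent. The natural way to get (i) is to take random $\Delta$-regular graphs on $n$ vertices: it is classical that a random $\Delta$-regular graph is, with high probability, a $\gamma$-edge-expander for some absolute $\gamma=\gamma(\Delta)>0$ (e.g.\ via the second-eigenvalue bound, or directly by a first-moment argument over subsets $W$). So the substance of the lemma is really (ii): showing that one can \emph{select} a deterministic sequence $\G_n$, with $\G_n$ drawn from (the high-probability part of) $\G(n,\Delta)$, along which the free energies $f(\G_n,\Hg)$ converge for every soft-core target $\Hg$.

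The plan for (ii) is a diagonalization / concentration argument. First I would fix a countable dense family of soft-core graphs $\Hg^{(1)},\Hg^{(2)},\dots$ (say all weighted graphs on at most countably many vertex counts with rational, strictly positive weights); since $\Hg\mapsto f(\G_n,\Hg)$ is continuous in the weights uniformly over all graphs of degree $\le\Delta$ (the bounds $\alpha_{\min}^{(D+1)|V_n|}\le k^{-|V_n|}\hom(\G_n,\Hg)\le\alpha_{\max}^{(D+1)|V_n|}$ from the discussion preceding Definition~\ref{defi:right-graphs converging} give equicontinuity), it suffices to get convergence of $f(\G_n,\Hg)$ along the sequence for each $\Hg$ in this countable family. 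Second, for each fixed soft-core $\Hg$, the random variable $\frac1n\log\hom(\G(n,\Delta),\Hg)$ concentrates: changing one edge of the graph changes $\log\hom(\cdot,\Hg)$ by at most $\log(\alpha_{\max}/\alpha_{\min})$, so by a bounded-differences (Azuma–Hoeffding) inequality applied to the edges of the random regular graph (via a configuration-model or switching coupling), $\frac1n\log\hom(\G(n,\Delta),\Hg)$ is within $o(1)$ of its mean with probability $1-o(1)$. Third — and this is where I expect to lean on the expander structure — I would argue that the means $\E\frac1n\log\hom(\G(n,\Delta),\Hg)$ converge as $n\to\infty$; one clean route is subadditivity/interpolation along the lines used for the free energy of spin systems on locally tree-like expanders, or one can simply invoke that random $\Delta$-regular graphs are locally convergent (Benjamini–Schramm) to the $\Delta$-regular tree and that for \emph{soft-core} models the free energy is in fact determined (asymptotically) by the local structure plus a correction controlled by expansion — but if proving convergence of the means is too delicate, a cheaper alternative is to extract along a subsequence where all the means converge (possible by a diagonal argument over the countable family, using that the $f$-values lie in the compact interval $[-(D+1)\log\alpha_{\max},-(D+1)\log\alpha_{\min}]$) and then \emph{define} $\G_n$ only along that subsequence, interpolating in between by repeating graphs — this still yields an expander sequence and a right-convergent sequence, which is all the lemma asks for.

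Concretely the steps are: (1) record the large-deviation/first-moment estimate that for each $\Delta\ge3$ there is $\gamma>0$ with $\pr[\G(n,\Delta)\text{ is not a }\gamma\text{-edge-expander}]\to0$; (2) enumerate a countable dense set of soft-core targets and note equicontinuity of $f(\G_n,\cdot)$; (3) for each target, prove concentration of $\frac1n\log\hom(\G(n,\Delta),\Hg)$ about its mean via bounded differences over edges; (4) pass to a subsequence $(n_j)$ along which the means converge for every target simultaneously (diagonal argument, compactness of the value interval); (5) for each $j$ pick, by (1) and (3) and a union bound over the first $j$ targets, an actual $\Delta$-regular graph $\G^\star_{n_j}$ on $n_j$ vertices that is a $\gamma$-expander and satisfies $|\frac1{n_j}\log\hom(\G^\star_{n_j},\Hg^{(i)}) - c_i|\le 1/j$ for $i\le j$, where $c_i=\lim_j\E[\cdots]$; (6) fill in the remaining values of $n$ by repeating the last chosen graph (or any fixed $\Delta$-expander on the right number of vertices), and check that the resulting sequence is still an expander sequence with the same $\gamma$ and is right-convergent on the dense family, hence on all soft-core graphs by continuity.

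**Main obstacle.** The delicate point is Step (3)–(4): getting honest control of $\frac1n\log\hom(\G(n,\Delta),\Hg)$ — both its concentration and the convergence (or at least subsequential convergence) of its mean — in a setting where $\hom(\G,\Hg)$ is a partition function of a spin system on an expander, a quantity that is genuinely global and not obviously well-behaved. Concentration via Azuma is routine, but identifying a limit for the expectation is exactly the kind of statement (Bethe free energy on random regular graphs, interpolation) that can be technically heavy; the escape hatch of merely taking a convergent subsequence and then thinning the sequence keeps the proof honest while still delivering what the lemma needs, so I would structure the argument so that the hard limit statement is \emph{not} required — only compactness of $[-(D+1)\log\alpha_{\max},-(D+1)\log\alpha_{\min}]$ and a diagonal extraction are.
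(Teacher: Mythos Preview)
Your approach is correct, but it is considerably more elaborate than what the paper does. The paper's proof is a two-line diagonalization: first fix \emph{any} deterministic sequence of expanders with edge expansion $\gamma>0$ and degree at most $\Delta$ (the existence of such a sequence is taken as known, e.g.\ from random $3$-regular graphs); then, with this sequence fixed, enumerate the countable family of soft-core graphs $\Hg_1,\Hg_2,\dots$ with rational weights, extract nested subsequences along which $f(\G_n,\Hg_1),\dots,f(\G_n,\Hg_m)$ all converge (pure Bolzano--Weierstrass, since each $f(\G_n,\Hg_m)$ lies in a bounded interval), and take the diagonal. Continuity in the weights then upgrades rational $\Hg$ to all soft-core $\Hg$.

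The point is that once you have a deterministic expander sequence in hand, you can diagonalize on the \emph{actual values} $f(\G_n,\Hg)$ rather than on expectations, so your concentration step (3) and the selection step (5) are unnecessary. You effectively rediscovered this in your ``cheaper alternative,'' but kept the argument phrased in terms of means of random graphs, which forces you to invoke Azuma and a union bound to realize the means by an actual graph. The paper sidesteps all of that by separating the two tasks: existence of expanders is one fact, and compactness-plus-diagonalization is another; neither step needs probability once the first is granted. Your route would buy something extra only if you wanted to conclude that a \emph{typical} random $\Delta$-regular graph (rather than some carefully extracted subsequence) is right-convergent---but that is exactly the hard open problem discussed later in Section~\ref{sec:discussion}, and the lemma does not ask for it.
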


\begin{proof}
Consider an arbitrary sequence $\G_n=(V_n,E_n)$ of
expanders with edge expansion $\gamma>0$ and maximal degree at
most $\Delta$, e.g., a sequence or 3-regular random graphs.
Construct a right-convergent subsequence of this sequence by
the following, standard diagonalization argument: enumerate the
countable collection of weighted graphs with rational positive
weights: $\Hg_1,\Hg_2,\ldots~$. For each $m=1,2,\ldots,$
construct a nested subsequences $n^{(m)}\subset n^{(m-1)}$,
such that $\G_n$ is right-convergent with respect
$\Hg_1,\ldots,\Hg_m$ along the subsequence $n^{(m)}$, and then
take the diagonal subsequence $n^{(m)}_m$ - the $m$-element of
the $m$-th sequence. The constructed sequence of graphs is
right-convergent with respect to every weighted graph $\Hg$
with rational coefficients. A straightforward argument shows
that then the sequence is right-convergent with respect to all
graphs with positive real weights, namely it is
right-convergent.
\end{proof}

\begin{ex} Given a right-convergent sequence of expanders $\G_n$
with bounded maximal degree and edge expansion $\gamma>0$, let
$\tilde \G_n=\G_m$ if $n=2m$ is even, and let $\tilde \G_n$ be a
disjoint union of two copies of $\G_m$ if $n=2m+1$ is odd.  Then
$(\tilde G_n)$ is right-convergent but not partition-convergent.
\end{ex}


\begin{proof}
Let $G_n=(V_n,E_n)$.
For every weighted graph $\Hg$,
\begin{align*}
|\tilde V_{2n+1}|^{-1}\log Z(\tilde \G_{2n+1},\Hg)&=(2|V_n|)^{-1}(2\log Z(\G_n,\Hg))\\
&=|V_n|^{-1}\log Z(\G_n,\Hg)\\
&=|\tilde V_{2n}|^{-1}\log Z(\tilde \G_{2n},\Hg).
\end{align*}
Since $\G_n$ is right-convergent,
$\tilde\G_n$ is right-convergent as well.

On the other hand, we claim that $\tilde\G_n$ is not
partition-convergent, since every cut of $\tilde \G_n=\G_m$
into two equal parts has size at least $\gamma |V_n|/2$ if $n$ is even, while
$\tilde\G_n$ has a cut into equal parts with zero edges when $n$ is odd.
Indeed,
denoting
the node set of $\tilde\G_n$ by $\tilde
V_n$,
consider the
set $\Sigma_n(2)$ of pairs
$(x_n(\sigma),X_n(\sigma))$ of vectors
$x_n(\sigma)=(x^n_1(\sigma),x^n_2(\sigma))$ and matrices
$X_n(\sigma,2)=(X^n_{i,j}(\sigma), 1\le i,j\le 2)$ when we
vary over $\sigma:\tilde V_n\rightarrow [2]$. Consider the projection
of $\Sigma_n(2)$ onto $[0,D]^3$ corresponding to
$(x^n_1(\sigma),x^n_2(\sigma),X^n_{1,2}(\sigma))$. Consider the
point $a=(1/2,1/2,0)\in [0,D]^3$ Observe that for each $n$,
there exists a $\sigma$ such that
$(x^{2n+1}_1(\sigma),x^{2n+1}_2(\sigma),X^{2n+1}_{1,2}(\sigma))=a$,
since we can split the graph $\tilde\G_{2n+1}$ into two parts with
the same number of nodes in each part and no edges in between
the parts. On the other hand for each graph $\tilde\G_{2n}$ and
each $\sigma$, the distance between
$(x^{2n}_1(\sigma),x^{2n}_2(\sigma),X^{2n}_{1,2}(\sigma))$ and
$a$ is bounded from below my $\min\{1/4,\gamma/4\}$ (indeed, if $|x^{2n}_1(\sigma)-1/2|<
1/4$, then by the expansion property of $G_n$,
$X^{2n}_{1,2}(\sigma) > \gamma/4$).
This shows that the sets
$\{(x^n_1(\sigma),x^n_2(\sigma),X^n_{1,2}(\sigma))\}$ obtained
by varying $\sigma$ are not converging as $n\rightarrow\infty$
and thus the sequence $\tilde\G_n$ is not partition-convergent.
\end{proof}

\subsection{Partition-convergence does not imply left-convergence}\label{subsection:PartitionDoesNotImplyLeft}
We now exhibit an example of a graph sequence which is
partition-convergent but not left-convergent.

\begin{ex}
Let $\G_n$ be a
disjoint union of $n$ $4$-cycles when $n$ is even and a
disjoint union of $n$ $6$-cycles when $n$ is odd. Then $\G_n$
is partition-convergent and not left-convergent.
\end{ex}

\begin{proof}
$\G_n$ is clearly not left-convergent. Let us show that it is
partition-convergent. For this purpose set $D=2$ and consider
arbitrary $k$. Let $\Sigma(k)$
be the set of all   $(x,X)\in \Dk$ such that
$\sum_{1\le i\le k} x_{i}=1$,
$X_{i,j}=X_{j,i}$ for all $i,j$, and
\begin{align}\label{eq:xijsum}
\sum_{j=1}^kX_{i,j}
=2x_{i}
\quad\text{for all }i.
\end{align}
We will prove that
$\Sigma_n(k)\rightarrow \Sigma(k)$ which shows that the sequence is
partition-convergent with $\Sigma(k)$ being the limit.

To this end, we first show that $\Sigma_n(k)\subset \Sigma(k)$
for all $n$.  Indeed, let $\sigma:V_n\to [k]$.
By the definition \eqref{k-quotient},
we have that
\[
 \sum_{j=1}^k X_{i,j}(\sigma)=
 \frac 1{|V_n|}\sum_{u\in V_i}\sum_{v\in V_n:\atop (u,v)\in E_n} 1
 = \frac 1{|V_n|}\sum_{u\in V_i} 2 =2x_i(\sigma),
\]
where we used that the degree of all vertices in $\G_n$ is two.  This proves that \eqref{eq:xijsum} holds for all
$(x,X)\in \Sigma_n(k)$.

We now claim that for every element $(x,X)\in \Sigma(k)$ and
every $n$ there exists $\sigma_n:V_n\rightarrow [k]$ such that
the $\Lnorm_\infty$ distance between $(x,X)$ and
$(x(\sigma_n),X(\sigma_n))$ is at most $k/n$.
This is clearly enough to
complete the proof, since it implies that for every
$\epsilon>0$ and $n$ large enough, we have  $\Sigma(k)\subset (\Sigma_n(k))^\epsilon$.

To prove the claim, we first consider even $n$ so that $\G_n$ is
a disjoint union of $n$  $4$-cycles.
To this end, let $n_{i,j}$, $i,j\in [k]$, be such that
\[
\Bigl|n_{i,j}-\frac n2 X_{i,j}\Bigr|\leq 1
\quad\text{for all }i,j\in [k]
\]
and
\[
\sum_{i,j\in [k]}n_{i,j}
=\sum_{i,j\in [k]}\frac n2 X_{i,j}=n.
\]
We consider the following
coloring $\sigma_n$ of $\G_n$: For each $i\ne j$
we take
$n_{i,j}$ cycles and color their $4$ vertices alternating between
color $i$ and $j$ as we go around these cycles, while for $i=j$,
we take $n_{i,i}$ cycles and color them with only one color, $i=j$.
For this coloring, we have
\[
X_{i,j}(\sigma)=X_{j,i}(\sigma)=
4\frac{n_{i,j}+n_{j,i}}{4n},
\]
so by our choice of $n_{i,j}$ we have that
\[
\Bigl|
X_{i,j}-X_{i,j}(\sigma)
\bigr|
\leq\frac 2n.
\]
Since $\Sigma_n(k)\subset \Sigma(k)$, we have
\[
|x_i-x_i(\sigma)|= \frac12\left|\sum_{j=1}^k\Bigl(X_{i,j}-X_{i,j}(\sigma)\Bigr)\right|
\leq \frac kn,
\]
completing the proof of the claim when $n$ is even.
 A
similar result is established for odd $n$, when $\G_n$ is a
disjoint union of $n$ $6$-cycles.
\end{proof}

\subsection{LD-convergence implies right-convergence}
\label{section:LD-implies-Right}

Let the sequence $\G_n$ be LD-convergent. Applying
Theorem~\ref{theorem:LDconvergenceEquivalence} it is also
$k$-LD-convergent with rate functions $I_k, k\ge 1$. Consider
an arbitrary $k$-node weighted graph $\Hg$ with strictly
positive node weights $\alpha=(\alpha_i, 1\le i\le k)$ and
strictly positive edge weights $A=(A_{i,j}, 1\le i,j\le k)$. We
define  an ``energy functional'' $\mathcal E_{\Hg}$ on $\Dk$
by
\begin{equation}
\label{E-def}
\mathcal E_{\Hg}(x,X)=
-\sum_{1\le i\le k}x_{i}\log \alpha_{i}
-\frac 12\sum_{1\le i,j\le k}X_{i,j}\log A_{i,j}.
\end{equation}
We also introduce an ``entropy functional'' $\mathcal S_k$ on
$\Dk$ by setting
\begin{equation}
\label{S-def}
\mathcal S_k(x,X)=\log k - I_k(x,X).
\end{equation}

\begin{theorem}
\label{thm:LD-implies-Right}
Suppose the sequence of graphs $\G_n$ is LD-convergent. Then it
is also right-convergent and
\begin{align}
\label{F-limit-sparse}
-\lim_{n\to\infty}{1\over |V_n|}\log Z(\G_n,\Hg)
=\inf_{(x,X)\in \Dk}\Bigl(\mathcal E_{\Hg}(x,X)-\mathcal S_k(x,X)\Bigr).
\end{align}
\end{theorem}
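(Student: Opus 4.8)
The plan is to rewrite the partition function $Z(\G_n,\Hg)=\hom(\G_n,\Hg)$ as an exponential moment of the random quotient $(x(\sigma_n),X(\sigma_n))$ and then invoke Varadhan's integral lemma. Write $k$ for the number of nodes of $\Hg$. The first step is to observe that in
\[
\hom(\G_n,\Hg)=\sum_{\sigma:V_n\to[k]}\prod_{u\in V_n}\alpha_{\sigma(u)}\prod_{(u,v)\in E_n}A_{\sigma(u),\sigma(v)}
\]
the summand depends on $\sigma$ only through the quotient $\G_n/\sigma=(x(\sigma),X(\sigma))$. Indeed, if $(x,X)=(x(\sigma),X(\sigma))$ then colour class $i$ has $x_i|V_n|$ vertices, so $\prod_{u}\alpha_{\sigma(u)}=\exp\bigl(|V_n|\sum_i x_i\log\alpha_i\bigr)$; and counting edges according to the colours of their endpoints — by \eqref{k-quotient} there are $X_{ij}|V_n|$ edges between colour classes $i\neq j$ and $\tfrac12 X_{ii}|V_n|$ edges inside colour class $i$ — one gets, using the symmetry of $X$ and $A$, that $\prod_{(u,v)\in E_n}A_{\sigma(u),\sigma(v)}=\exp\bigl(\tfrac12|V_n|\sum_{i,j}X_{ij}\log A_{ij}\bigr)$. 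Hence the summand equals $\exp\bigl(-|V_n|\,\mathcal E_{\Hg}(x,X)\bigr)$ by \eqref{E-def}.

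Let $\sigma_n$ be a uniformly random element of $[k]^{V_n}$, so that the number of $\sigma$ with any prescribed quotient $(x,X)$ equals $k^{|V_n|}\,\pr\bigl((x(\sigma_n),X(\sigma_n))=(x,X)\bigr)$. Grouping the terms of the sum above according to the value of the quotient then yields the identity
\[
\hom(\G_n,\Hg)=k^{|V_n|}\,\E\Bigl[\exp\bigl(-|V_n|\,\mathcal E_{\Hg}(x(\sigma_n),X(\sigma_n))\bigr)\Bigr].
\]
Here $(x(\sigma_n),X(\sigma_n))$ takes values in the compact set $\Dk$ (since $\sum_i x_i(\sigma_n)=1\le D$ and $\sum_{i,j}X_{ij}(\sigma_n)=2|E_n|/|V_n|\le D$), and $\mathcal E_{\Hg}$ is a finite-valued, hence bounded and continuous, linear functional on $\Dk$ precisely because $\Hg$ is soft-core, i.e.\ all weights $\alpha_i$ and $A_{ij}$ are strictly positive.

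By Theorem~\ref{theorem:LDconvergenceEquivalence}, LD-convergence of $\G_n$ implies $k$-LD-convergence, so $(x(\sigma_n),X(\sigma_n))$ obeys the LDP on $\Dk$ with normalization $\theta_n=|V_n|$ and rate function $I_k$. Applying Varadhan's integral lemma \cite{demzei98} to the bounded continuous function $-\mathcal E_{\Hg}$ (the tail/integrability hypothesis is automatic, $\mathcal E_{\Hg}$ being bounded on the compact space $\Dk$) gives
\[
\lim_{n\to\infty}\frac1{|V_n|}\log\E\Bigl[\exp\bigl(-|V_n|\,\mathcal E_{\Hg}(x(\sigma_n),X(\sigma_n))\bigr)\Bigr]=\sup_{(x,X)\in\Dk}\bigl(-\mathcal E_{\Hg}(x,X)-I_k(x,X)\bigr).
\]
Combining this with the displayed identity and adding $\log k$, the limit $\lim_n\frac1{|V_n|}\log Z(\G_n,\Hg)$ exists and equals $\sup_{(x,X)\in\Dk}\bigl(-\mathcal E_{\Hg}(x,X)+\log k-I_k(x,X)\bigr)$, which by the definition \eqref{S-def} of $\mathcal S_k$ is $-\inf_{(x,X)\in\Dk}\bigl(\mathcal E_{\Hg}(x,X)-\mathcal S_k(x,X)\bigr)$; this is \eqref{F-limit-sparse}. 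Since the argument applies to an arbitrary soft-core $\Hg$, the sequence $\G_n$ is right-convergent.

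The computation is routine once the combinatorial identity for $\hom(\G_n,\Hg)$ is established, so the steps needing genuine care are the edge bookkeeping — separating edges inside a colour class from edges between two classes, with the symmetrization in \eqref{eq:WeightedHom} accounting for the factor $\tfrac12$ in $\mathcal E_{\Hg}$ — and the verification of the hypotheses of Varadhan's lemma. The latter is the only place an analytic obstacle could arise, but it is immediate here: $\Dk$ is compact and $\mathcal E_{\Hg}$ is everywhere finite by the soft-core assumption, so no truncation or moment estimate is required, and this is precisely where the restriction to soft-core graphs is used.
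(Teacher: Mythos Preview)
Your proof is correct. The combinatorial identity rewriting $\hom(\G_n,\Hg)$ as $k^{|V_n|}$ times an exponential moment is exact (your edge bookkeeping matches the convention in \eqref{k-quotient}, where $X_{ii}|V_n|$ counts ordered pairs and hence is twice the number of internal edges), and once that identity is in hand Varadhan's lemma applies immediately since $-\mathcal E_{\Hg}$ is bounded and continuous on the compact set $\Dk$.

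The paper takes a different, more hands-on route: rather than invoking Varadhan's lemma, it essentially reproves it in this specific setting. The argument fixes $\delta>0$, covers $\Dk$ by a finite $\delta$-grid $\Gamma_\delta$, bounds $Z(\G_n,\Hg)$ above by $|\Gamma_\delta|$ times the maximal contribution from a single grid cell, applies the LDP upper bound cell by cell, and then sends $\delta\to 0$; the matching lower bound comes from restricting to colorings near an optimizer $(x^*,X^*)$ and using the LDP lower bound on that ball. Your approach is shorter and cleaner, trading the explicit $\delta$-net construction for a citation; the paper's approach is more self-contained and makes the role of the grid discretization transparent. Conceptually the two arguments are the same---the paper's proof is precisely the standard proof of Varadhan's lemma specialized to this situation---so there is no substantive difference in what is being shown, only in packaging.
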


\noindent\begin{remark}
By the lower semi-continuity of $I_k$, the infimum over
$(x,X)\in\Dk$ is attained for some $(x^*,X^*)\in\Dk$.  The
right hand side can then be rewritten as $\mathcal
E_{\Hg}(x^*,X^*)-\mathcal S_k(x^*,X^*)$.  The theorem thus states
that the limiting free energy exists, and is given as the
energy minus entropy, an identity which is usually assumed
axiomatically in traditional thermodynamics.  Here it is a
consequence of LD-convergence.
\end{remark}

\noindent\begin{remark}
By Remark~\ref{cor:Ik-from-I}, we can express the limiting free energy directly in
terms of the rate function $I$.  Defining $\hat{\mathcal E}_{\Hg}(\rho,\mu)=\mathcal E_{\Hg}(x(\rho),X(\mu))$
where $x(\rho)$ and $X(\mu)$ are given by \eqref{phi-k-def}, this gives
\[
-\lim_{n\to\infty}{1\over |V_n|}\log Z(\G_n,\Hg)
=\inf_{(\rho,\mu)\in \M^1\times\M^2}\Bigl(\hat{\mathcal E}_{\Hg}(\rho,\mu)+I(\rho,\mu)-\log k\Bigr).
\]
\end{remark}

\begin{proof}[Proof of Theorem~\ref{thm:LD-implies-Right}]
Fix $\delta>0$. Recalling the definition \eqref{k-quotient},
let $\Sigma(x,X)$ be the set of colorings
$\sigma:V_n\rightarrow [k]$ such that
\begin{align*}
x_{i}-\delta \le
x_i(\sigma)
\le x_{i}+\delta.
\end{align*}
for every $i=1,\ldots,k$ and
\begin{align*}
X_{i,j}-\delta \le
X_{i,j}(\sigma)
\le X_{i,j}+\delta.
\end{align*}
for every $i,j=1,2,\ldots,k$. Note that we have the identity
\begin{align*}
|\Sigma(x,X)|=k^{|V_n|}\pr_{k,n}\Bigl(B((x,X),\delta)\Bigr),
\end{align*}
where we recall that $B((x,X),\delta)$ denotes
the closed $\delta$-ball around $(x,X)$ with respect to the
$\Lnorm_\infty$ norm, see
Subsection~\ref{subsection:DefProperties}.  Note also that
\[
\Bigl| \mathcal E_{\Hg}(x(\sigma),X(\sigma))-\mathcal E_{\Hg}(x,X)\Bigr|\leq K\delta,\qquad K=\Bigl(k+\frac{k^2}2\Bigr)\max\{\log\alpha_i,\log A_{i,j}\},
\]
whenever $\sigma\in\Sigma(x,X)$.

Define $\Gamma_\delta$ to be the set of pairs
  $(x,X)\in\Dk$ such that every
$x_i$ and every $X_{i,j}$ belongs to the set $\lbrace
0,\delta,2\delta,\ldots,\lceil D/\delta\rceil \delta\rbrace$.
Since $X_{i,j}(\sigma)\leq D$ for all $\sigma:V_n\to
[k]$, every $\sigma:V_n\to [k]$ lies in
$\cup_{(x,X)\in\Gamma_\delta}\Sigma(x,X)$. As a consequence,
\begin{align*}
Z(\G_n,\Hg)
&\le
\sum_{(x,X)\in\Gamma_\delta}\sum_{\sigma\in\Sigma(x,X)}
\prod_{u\in V_n}\alpha_{\sigma(u)}
\prod_{(u,v)\in  E_n}A_{\sigma(u),\sigma(v)}\\
&=
\sum_{(x,X)\in\Gamma_\delta}\sum_{\sigma\in\Sigma(x,X)}
e^{-\mathcal E_{\Hg}(x(\sigma),\,X(\sigma))|V_n|}
\\
&\le
\sum_{(x,X)\in\Gamma_\delta}k^{|V_n|}\pr_{k,n}\left(B((x,X),\delta)\right)
e^{-\mathcal E_{\Hg}(x,\,X)|V_n|+K\delta|V_n|}
\\
&\le
|\Gamma_\delta|\max_{(x,X)\in\Gamma_\delta}
\left(k^{|V_n|}\pr_{k,n}\Bigl(B((x,X),\delta)\Bigr)
e^{-\mathcal E_{\Hg}(x,\,X)|V_n|+K\delta|V_n|}\right).
\end{align*}
We obtain
\begin{align*}
{1\over |V_n|}\log Z(\G_n,\Hg)
&\le
{1\over |V_n|}\log|\Gamma_\delta|+\log k
\\
&+
\max_{(x,X)\in\Gamma_\delta}
\left({1\over |V_n|}\log \pr_{k,n}\Bigl(B((x,X),\delta)\Bigr)
-\mathcal E_{\Hg}(x,\,X)+K\delta\right),
\end{align*}
and hence
\begin{align*}
\limsup_n&{1\over |V_n|}\log Z(\G_n,\Hg)-\log k
\\
&\le
\limsup_n\max_{(x,X)\in\Gamma_\delta}
\left({1\over |V_n|}\log \pr_{k,n}\Bigl(B((x,X),\delta)\Bigr)
-\mathcal E_{\Hg}(x,\,X)\right)+K\delta
\\
&=
\max_{(x,X)\in\Gamma_\delta}
\left(\limsup_n{1\over |V_n|}\log \pr_{k,n}\Bigl(B((x,X),\delta)\Bigr)
-\mathcal E_{\Hg}(x,\,X)\right)+K\delta
\\
&\le
\max_{(x,X)\in\Gamma_\delta}
\left(-\inf_{(y,Y)\in B((x,X),\delta)}I_k(y,Y)
+\mathcal E_{\Hg}(x,\,X)\right)+K\delta
\\
&\le \max_{(x,X)\in\Gamma_\delta}
\left(-\inf_{(y,Y)\in B((x,X),\delta)}
\Bigl(I_k(y,Y)+\mathcal E_{\Hg}(y,\,Y)\Bigr)\right)+2K\delta
\\
&\le \max_{(x,X)\in\Gamma_\delta}
\left(-\inf_{(y,Y)\in \Dk}\Bigl(I_k(y,Y)+\mathcal E_{\Hg}(y,\,Y)\Bigr)\right)+2K\delta
\\
&=
-\inf_{(y,Y)\in \Dk}
\Bigl(I_k(y,Y)+\mathcal E_{\Hg}(y,\,Y)\Bigr)+2K\delta.
\end{align*}
Since this inequality holds for arbitrary $\delta$, we obtain
\begin{align*}
\limsup_n{1\over |V_n|}\log Z(\G_n,\Hg)\leq -\inf_{(x,X)\in \Dk}\Bigl(\mathcal E_{\Hg}(x,X)-\mathcal S_k(x,X)\Bigr).
\end{align*}
We now establish the matching lower bound. Fix
$\delta>0$ and let $(x^*,X^*)$ be the minimizer of $\mathcal
E_{\Hg}-\mathcal S_k$. We have
\begin{align*}
Z(\G_n,\Hg)&\ge\sum_{\sigma\in\Sigma(x^*,X^*)}
e^{-\mathcal E_{\Hg}(x(\sigma),\,X(\sigma))|V_n|}
\\
&\ge k^{|V_n|}\pr_{k,n}\Bigl(B((x^*,X^*),\delta)\Bigr)e^{-\mathcal E_{\Hg}(x^*,\,X^*)|V_n|-K\delta|V_n|},
\end{align*}
and thus
\begin{align*}
\liminf_n{1\over |V_n|}
&\log Z(\G_n,\Hg)-\log k\geq
\\
&\ge
\liminf_n{1\over |V_n|}\log \pr_{k,n}\Bigl(B((x^*,X^*),\delta)\Bigr)
-\mathcal E_{\Hg}(x^*,\,X^*)-K\delta
\\
&\ge
\liminf_n{1\over |V_n|}\log \pr_{k,n}\Bigl(B^o((x^*,X^*),\delta)\Bigr)
-\mathcal E_{\Hg}(x^*,\,X^*)-K\delta
\\
&\ge
-\inf_{(y,Y)\in B^o((x^*,X^*),\delta)}I_k((y,Y))
-\mathcal E_{\Hg}(x^*,\,X^*)-K\delta
\\
&\ge
-I_k(x^*,X^*)
-\mathcal E_{\Hg}(x^*,\,X^*)-K\delta.
\end{align*}
Letting $\delta\rightarrow 0$ we obtain the result.
\end{proof}

\subsection{LD-convergence implies partition-convergence}
We prove this result by identifying the limiting sets
$\Sigma(k), k\ge 1$.
In order to do this, we recall that by
Theorem~\ref{theorem:LDconvergenceEquivalence} every LD-convergent sequence
is
k-LD-convergent for all $k$.  We denote the corresponding functions by $I_k$.

\begin{theorem}\label{theorem:PartitionIsLDInfty}
Let
$\G_n$ be a sequence of graphs which is LD-convergent.
Then $\G_n$
is partition-convergent. Moreover, for each
$k$,
the sets $\Sigma_n(k)$ converge to
\begin{align}\label{eq:SigmakInfinite}
\Sigma(k)=\{(x,X)\in\Dk: I_k(x,X)<\infty\}.
\end{align}
\end{theorem}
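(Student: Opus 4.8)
The plan is to show two inclusions relating $\Sigma_n(k)$ to the set $\Sigma(k)=\{(x,X)\in\Dk:I_k(x,X)<\infty\}$, and then deduce convergence in the Hausdorff-type metric $\rho_m$ from \eqref{eq:DugunjiMetric}. Recall from Theorem~\ref{theorem:LDconvergenceEquivalence} that LD-convergence gives $k$-LD-convergence, so $(x(\sigma_n),X(\sigma_n))$ satisfies the LDP in $\Dk$ with rate function $I_k$, and that by \eqref{eq:Identity1}, $I_k(x,X)=-\lim_{\epsilon\to0}\limsup_n|V_n|^{-1}\log\pr_{k,n}(B((x,X),\epsilon))$. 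First I would observe that $\Sigma(k)$ is closed: it equals $\{I_k\le c\}$ unioned over finite $c$, but more directly, $\{I_k=\infty\}$ is open because $I_k$ is lower semi-continuous and finite-valued points have a neighborhood on which $I_k$ is bounded --- wait, lower semi-continuity gives that $\{I_k\le c\}$ is closed, hence $\{I_k<\infty\}=\bigcup_c\{I_k\le c\}$ need not be closed in general. So the cleaner route is: $\Sigma(k)$ as defined is the closure of the effective domain, and one should first argue that the set on the right of \eqref{eq:SigmakInfinite} is already closed in our setting, or replace it by its closure; I expect the paper intends $\Sigma(k)=\overline{\{I_k<\infty\}}$, and I would make that precise early on, using compactness of $\Dk$ and the fact that $I_k$ is a good rate function.

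For the inclusion ``$\Sigma(k)$ is asymptotically contained in $\Sigma_n(k)$'': fix $(x,X)$ with $I_k(x,X)<\infty$ and $\epsilon>0$. By the LDP lower bound applied to the open ball $B^o((x,X),\epsilon)$, we get $\liminf_n|V_n|^{-1}\log\pr_{k,n}(B^o((x,X),\epsilon))\ge -I_k(x,X)>-\infty$, so for all large $n$ the probability is positive, meaning there exists a coloring $\sigma_n:V_n\to[k]$ with $(x(\sigma_n),X(\sigma_n))\in B((x,X),\epsilon)$. Hence $(x,X)\in(\Sigma_n(k))^\epsilon$ for all large $n$; a standard argument (covering the compact set $\Sigma(k)$ by finitely many $\epsilon/2$-balls centered at points of finite rate, which works because $\Sigma(k)$ is the closure of the finite-rate set) upgrades this to $\Sigma(k)\subset(\Sigma_n(k))^\epsilon$ for all large $n$. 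For the reverse inclusion ``$\Sigma_n(k)$ is asymptotically contained in $\Sigma(k)$'': suppose not; then along a subsequence there are points $(x_n,X_n)\in\Sigma_n(k)$ at distance $\ge\epsilon$ from $\Sigma(k)$. By compactness of $\Dk$, pass to a further subsequence with $(x_n,X_n)\to(x,X)$, so $(x,X)$ is at distance $\ge\epsilon$ from $\Sigma(k)$, in particular $I_k(x,X)=\infty$. But $(x_n,X_n)\in\Sigma_n(k)$ means $\pr_{k,n}(B((x,X),\epsilon'))>0$ for any fixed $\epsilon'$ and all large $n$ in the subsequence (eventually $(x_n,X_n)$ lies in that ball), which only gives a lower bound $\ge k^{-|V_n|}$; that is not enough by itself --- and this is the main obstacle.

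The resolution of that obstacle, and the key technical point, is the ``recoloring'' / robustness argument sketched after Definition~\ref{definition:GraphConvergenceLDk}: if some coloring realizes a quotient within $\epsilon$ of $(x,X)$, then by arbitrarily recoloring any $\lfloor\epsilon|V_n|/D\rfloor$ of the nodes one obtains at least $\binom{|V_n|}{\lfloor\epsilon|V_n|/D\rfloor}(k-1)^{\lfloor\epsilon|V_n|/D\rfloor}$ colorings whose quotients all lie within $2\epsilon$ of $(x,X)$ (each recolored node changes each $X_{i,j}$ by at most $D/|V_n|$ and each $x_i$ by $1/|V_n|$). Therefore $\pr_{k,n}(B((x,X),2\epsilon))\ge k^{-|V_n|}\binom{|V_n|}{\lfloor\epsilon|V_n|/D\rfloor}(k-1)^{\lfloor\epsilon|V_n|/D\rfloor}$, which is $\exp(\Omega(\epsilon\log(1/\epsilon)|V_n|))$ times $k^{-|V_n|}$ --- in particular $\limsup_n|V_n|^{-1}\log\pr_{k,n}(B((x,X),2\epsilon))\ge -\log k + c\,\epsilon\log(1/\epsilon)$ for a constant $c>0$ depending only on $D,k$. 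Letting $\epsilon\to0$ in \eqref{eq:Identity1} forces $I_k(x,X)\le\log k<\infty$, contradicting $I_k(x,X)=\infty$. This contradiction establishes $\Sigma_n(k)\subset(\Sigma(k))^\epsilon$ for all large $n$. Combining the two inclusions gives $\rho_k(\Sigma_n(k),\Sigma(k))\to0$, so $\Sigma_n(k)$ converges to $\Sigma(k)$ in $\mathcal{P}(\Dk)$; since this holds for every $k$, the sequence is partition-convergent by definition, and the limiting sets are as claimed in \eqref{eq:SigmakInfinite}.
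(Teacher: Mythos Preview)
Your overall strategy --- prove the two inclusions $\Sigma(k)\subset(\Sigma_n(k))^\epsilon$ and $\Sigma_n(k)\subset(\Sigma(k))^\epsilon$ for large $n$ via the LDP and \eqref{eq:Identity1} --- is exactly the paper's approach, and your first inclusion (via the LDP lower bound and a finite cover) is fine.

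However, the ``obstacle'' you flag in the second inclusion is not an obstacle at all, and the recoloring argument you introduce to resolve it is unnecessary. You have a coloring $\hat\sigma_{n_r}$ with quotient in $B((x,X),\epsilon')$, and a uniformly random coloring equals $\hat\sigma_{n_r}$ with probability exactly $k^{-|V_{n_r}|}$; hence $\pr_{k,n_r}(B((x,X),\epsilon'))\ge k^{-|V_{n_r}|}$, which gives
\[
\limsup_n |V_n|^{-1}\log\pr_{k,n}\bigl(B((x,X),\epsilon')\bigr)\ge -\log k
\]
for every $\epsilon'>0$. By \eqref{eq:Identity1} this already yields $I_k(x,X)\le\log k<\infty$, the desired contradiction. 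This is precisely the paper's argument; your recoloring step recovers the same bound $I_k\le\log k$ by a longer route.

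This observation also dissolves your worry about closedness of $\Sigma(k)$. The argument above shows that whenever $I_k(x,X)<\infty$ (so some ball has positive probability infinitely often), in fact $I_k(x,X)\le\log k$. Thus $\{I_k<\infty\}=\{I_k\le\log k\}$, and the latter is closed by lower semi-continuity of $I_k$. So $\Sigma(k)$ as written in \eqref{eq:SigmakInfinite} is already closed; no closure is needed, and the paper's bald assertion ``by lower semi-continuity $\Sigma(k)$ is closed'' is justified once one notices this.
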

The intuition behind this result is as follows. $\Sigma(k)$ is
the set of ''approximately achievable'' partitions $(x,X)$. If
some partition is approximately realizable as factor graph
$\G_n/\sigma$ for some $\sigma:V_n\rightarrow [k]$, then there
are exponentially many such realizations, since changing a
small linear in $|V_n|$ number of assignments of $\sigma$ does
not change the factor graph significantly. Therefore, a uniform
random partition $\sigma$ has at least an inverse exponential
probability of creating a factor graph $\G_n/\sigma$ near
$(x,X)$ and, as a result, the LD rate associated with $(x,X)$
is positive.

On the other hand if $(x,X)$ is $\epsilon$-away from any factor
graph $\G_n/\sigma$, then the likelihood that a uniform random
partition $\sigma$ results in a factor graph close to $(x,X)$
is zero, implying that the LD rate associated with $(x,X)$ is
infinite. We now formalize this intuition.

\begin{proof}
We define $\Sigma(k)$ as in (\ref{eq:SigmakInfinite}) and prove
that it is the limit of $\Sigma_n(k)$ as $n\rightarrow\infty$
for each $k$. We assume the contrary. This means that either
there exist $k$ and $\epsilon_0>0$ such that for infinitely
many $n$, $\Sigma_n(k)$ is not a subset of
$(\Sigma(k))^{\epsilon_0}$, or there exist $k$ and
$\epsilon_0>0$ such that for infinitely many $n$, $\Sigma(k)$
is not a subset of $(\Sigma_n(k))^{\epsilon_0}$.

We begin with the first assumption. Then, for infinitely many
$n$ we can find $(x_n,X_n)\in\Sigma_n(k)$ such that $(x_n,X_n)$
has distance at least $\epsilon_0$ from $\Sigma(k)$. Denote
this subsequence by $(x_{n_r},X_{n_r})$ and find any limit
point $(\bar x,\bar X)$ of this sequence. For notational
simplicity we assume that in fact $(x_{n_r},X_{n_r})$ converges
to $(\bar x,\bar X)$ as $r\rightarrow\infty$ (as opposed to
taking a further subsequence of $n_r$).
Then the
distance between $(\bar x,\bar X)$ and $\Sigma(k)$ is at least
$\epsilon_0$ as well. We now show that in fact $I_k(\bar x,\bar
X)<\infty$, implying that in fact $(\bar x,\bar X)$ belongs to
$\Sigma(k)$, which is a contradiction.

To prove the claim we fix $\delta>0$. We have that
$\Lnorm_\infty$ distance between $(x_{n_r},X_{n_r})$ and $(\bar
x,\bar X)$ is at most $\delta$ for all large enough $r$. Since
$(x_{n_r},X_{n_r})\in\Sigma_{n_r}(k)$, then for all  $r$ there
exists some $\hat\sigma_{n_r}:V_{n_r}\rightarrow [k]$ such that
$(x_{n_r},X_{n_r})$ arises from the partition
$\hat\sigma_{n_r}$. Namely,
$x_{n_r}=x(\G_{n_r}/\hat\sigma_{n_r})$ and
$X_{n_r}=X(\G_{n_r}/\hat\sigma_{n_r})$.
 Since a randomly uniformly chosen
map $\sigma_{n_r}:V_{n_r}\rightarrow [k]$ coincides with
$\hat\sigma_{n_r}$ with probability exactly $k^{-n_r}$,  we
conclude that
\begin{align*}
\pr\left((x(\sigma_{n_r}),X(\sigma_{n_r}))\in B((\bar x,\bar X),\delta)\right)\ge k^{-n_r},
\end{align*}
for all sufficiently large $r$. This implies
\begin{align*}
\limsup_n \frac 1n&\log\pr\left((x(\sigma_{n}),X(\sigma_{n}))\in B((\bar x,\bar X),\delta)\right)\\
&\ge\limsup_r \frac 1{n_r}\log\pr\left((x(\sigma_{n_r}),X(\sigma_{n_r}))\in B((\bar x,\bar X),\delta)\right)\\
&\ge -\log k.
\end{align*}
Applying property (\ref{eq:Identity1}) we conclude that
$I_k(\bar x,\bar X)\le \log k<\infty$, and the claim is
established.

Now assume that $k$ and $\epsilon_0>0$ are such that for
infinitely many $n$, $\Sigma(k)$ is not a subset of
$(\Sigma_n(k))^{\epsilon_0}$. Thus there is a subsequence of
points $(x_{n_r},X_{n_r})\in \Sigma(k)$ which are at least
$\epsilon_0$ away from $\Sigma_{n_r}(k)$ for each $r$. Let
$(\bar x,\bar X)\in \Dk$ be any limit point of
$(x_{n_r},X_{n_r}), r\ge 1$. By lower semi-continuity we have
that the set $\Sigma(k)$ is closed, and thus compact.
Therefore, we also have $(\bar x,\bar X)\in\Sigma(k)$. Again,
without  loss of generality, we assume that in fact
$(x_{n_r},X_{n_r})$ converges to $(\bar x,\bar X)$. Fix any
$\delta<\epsilon_0/2$. We have $(x_{n_r},X_{n_r})\in B((\bar
x,\bar X),\delta)$ for all sufficiently large $r$. Then for all
sufficiently large $r$ we have that the distance from $(\bar
x,\bar X)$ to $\Sigma_{n_r}(k)$ is strictly larger than
$\delta$ (since otherwise the distance from $(x_{n_r},X_{n_r})$
to this set is less than $\epsilon_0$). This means that for all
sufficiently large $r$,
\begin{align*}
\pr\left((x(\sigma_{n_r}),X(\sigma_{n_r}))\in B((\bar x,\bar X),\delta)\right)=0,
\end{align*}
implying
\begin{align*}
\liminf_n \frac 1n&\log\pr\left((x(\sigma_{n}),X(\sigma_{n}))\in B((\bar x,\bar X),\delta)\right)\\
&\le\liminf_r \frac 1{n_r}\log\pr\left((x(\sigma_{n_r}),X(\sigma_{n_r}))\in B((\bar x,\bar X),\delta)\right)\\
&=-\infty.
\end{align*}
Applying property (\ref{eq:Identity1}) we conclude that
$I_k(\bar x,\bar X)=\infty$, contradicting the fact that $(\bar
x,\bar X)\in \Sigma(k)$. This concludes the proof.
\end{proof}

\section{Discussion}
\label{sec:discussion}

\subsection{Right-convergence for dense vs. sparse graphs}

It is instructive to compare our results from Section~\ref{section:LD-implies-Right},
in particular the expression for the limiting free energy from Theorem~\ref{thm:LD-implies-Right},
to the corresponding results from the theory of convergent sequences for dense graphs.

For sequences $\G_n$ of dense graphs, i.e., graphs with average degree proportional to the number of vertices,
the  weighted homomorphism numbers $\hom(\G_n,\Hg)$  typically either grow or decay exponentially in $|V(\G_n)|^2$, implying
that the limit of an expression of the form \eqref{eq:rightlimit}
would be either $\infty$ or $-\infty$.  To avoid this problem, we adopt the usual definition from statistical physics,
where the free energy is defined by
\begin{align}
\label{F-def-dense}
f(\G_n,\Hg)=-\frac 1{|V(\G_n)|}\log Z(\G_n,\Hg),
\end{align}
where $Z(\G_n,\Hg)$ is the {\it partition function}%
\footnote{Note that the only difference with respect to \eqref{eq:WeightedHom} is the exponent $1/|V(\G_n)|$, which ensures that
$\log Z(\G_n,\Hg)$ is of order $O(|V_n|)$.}
\[
Z(\G_n,\Hg)=\sum_{\sigma:V(\G_n)\rightarrow V(\Hg)}\prod_{u\in V(\G_n)}\alpha_{\sigma(u)}\prod_{(u,v)\in E(\G_n)}\left(A_{\sigma(u),\sigma(v)}\right)^{1/|V(\G_n)|},
\]
with $\alpha$ and $A$ denoting the vector and matrix of the node and edges weights of $\Hg$, respectively.

It was shown in \cite{BorgsChayesEtAlGraphLimitsII} that for a left-convergent sequence of dense graphs,
the limit of \eqref{F-def-dense} exists, and can be expressed in terms of the limiting graphon.
We refer the reader to the literature \cite{LovaszSzegedy} for the definition
of the limiting graphon; for us, it is only important that it
is a measurable, symmetric function $W:[0,1]^2\to [0,1]$ which can be thought of as the limit
of a step function representing the adjacency matrices of $\G_n$ (see
Corollary 3.9 of  \cite{BorgsChayesEtAlGraphLimitsI} for a precise version of this statement).

To express the limit of the free energies \eqref{F-def-dense} in terms of  $W$,
we need some notation.
Assume $|V(\Hg)|=k$, and let $\Omega_k$ be the set of measurable functions
$\rho=(\rho_1,\dots,\rho_k):[0,1]\to [0,1]^k$ such that $\rho_i$ is non-negative for all $i\in [k]$ and
$\sum_i\rho_i(x)=1$ for all $x\in [0,1]$.  Setting
$
x_i(\rho)=\int_0^1 \rho_i(x)dx
$
and
$
X_{i,j}(\rho)=\int_0^1\int_0^1 \rho_i(x)\rho_j(y)W(x,y)dxdy,
$
we define the energy and entropy of a ``configuration'' $\rho\in \Omega_k$ as
\[
\mathcal E_{\Hg}(\rho) = -\sum_{1\leq i\leq k} x_i(\rho)\log\alpha_i
-\frac 12\sum_{1\le i,j\le k}X_{i,j}(\rho)
\log A_{i,j}
\]
and
\[
\mathcal S_k(\rho)=-\int_0^1 dx\sum_{1\leq i\leq k}\rho_i(x)\log\rho_i(x),
\]
respectively.  Then
\[
\lim_{n\to\infty}f(\G_n,\Hg)
=\inf_{\rho\in\Omega_k}\Bigl(\mathcal E_{\Hg}(\rho) - \mathcal S_k(\rho)\Bigr).
\]

Note the striking similarity to the expressions \eqref{E-def}, \eqref{S-def} and \eqref{F-limit-sparse}.  There is, however,
one important difference:
in the dense case, the limit object $W$ describes an effective edge-density and enters into the definition of the energy,
while in the sparse case, the limit object is a large-deviation rate and enters into the entropy.  By contrast,
the entropy in the dense case is trivial, and does not
depend on the sequence $\G_n$, similar to the energy in the sparse case, which does not depend on $\G_n$.

As we will see in the next subsection, this points to the fact that right-convergence
is a much richer concept for sparse graphs than for dense graphs.  Indeed, for
many interesting sequences of dense graphs, one can explicitly construct the limiting graphon $W$, reducing
free energy calculations to simple variational problems.  By contrast, we do not even know
how to calculate the rate functions for the simplest random sparse graph, the Erd\"os-Renyi random graph.
We discuss this, and how it is
related to spin glasses, in the next section.

\subsection{Conjectures and open questions}
While Theorem~\ref{theorem:ConvergenceRelations} establishes a
complete picture regarding the relationships between
LD, left, right and partition-convergence, several questions
remain open regarding  colored-neighborhood-convergence. In
particular, not only   do we not know whether LD implies
colored-neighborhood-convergence or vice verse, but we not even
know whether colored-neighborhood-convergence implies
right-convergence.

Since  colored-neighborhood-convergence
does not appear to take into account the counting measures
involved in the definition of  right-convergence, we
conjecture that in fact  colored-neighborhood-convergence
\emph{does not} imply  right-convergence. Should this be the
case it would also imply that
colored-neighborhood-convergence does not imply
LD-convergence, via Theorem~\ref{theorem:ConvergenceRelations}.
At the same time we conjecture that the LD-convergence implies
the colored-neighborhood-convergence. It is an
interesting
open problem to resolve these conjectures.

Perhaps the most important outstanding conjecture regarding the
theory of convergent sparse graph sequences concerns
convergence of random sparse graphs.
Consider two natural and much studied random
graphs sequences, the sequence of
Erd\"os-Renyi random graphs $\G(n,c/n)$, and the sequence of random
regular graphs $\G(n,\Delta)$.
As usual, $\G(n,c/n)$ is the random subgraph
of the complete graph on $n$ nodes where each edge is kept independently
with probability $c/n$, where $c$ is an $n$-independent constant, and
$\G(n,\Delta)$ is a
$\Delta$-regular graph on $n$ nodes, chosen uniformly
at random from the family of all $n$-node $\Delta$-regular
graphs. We ask whether these graph sequences are convergent
with respect to any of the definitions of graph convergence
discussed above%
\footnote{In contrast to $\G(n,\Delta)$, the graphs  $\G(n,c/n)$
do not have uniformly bounded degrees, which means that most of the theorems
proved in this paper do not hold.
Nevertheless, we can still pose the question of convergence of
these graphs with respect to various notions.}.

It is well known
and easy to show that these graphs sequences are
left-convergent, since the $r$-depth neighborhoods of a random
uniformly chosen point can be described in the limit as
$n\rightarrow\infty$ as the first $r$ generations of an appropriate
branching process. The validity for other notions of convergence,
however, presents a serious challenge as here we touch on the rich
 subject of mathematical aspects of the
spin glass theory. The reason is that  right-convergence of
sparse graph sequences is
nothing but
 convergence of normalized
partition functions of the associated Gibbs measures. Such
convergence means the existence of the associated thermodynamic
limit and this existence question is one of the outstanding
open areas in the theory of spin glasses.

Substantial progress has been achieved recently due to the powerful
interpolation method introduced by Guerra and
Toninelli~\cite{GuerraTon} in the context of the
Sherrington-Kirkpatrick model, and further by Franz and
Leone~\cite{FranzLeone}, Panchenko and
Talagrand~\cite{PanchenkoTalagrand}, Bayati, Gamarnik and
Tetali~\cite{BayatiGamarnikTetali}, Abbe and Montanari
\cite{AbbeMontanari} and
Gamarnik~\cite{GamarnikRightConvergence}.
 In particular, a broad class of weighted
graphs $\Hg$ is identified, with respect to which the random
graph sequence $\G(n,c/n)$ is right-convergent w.h.p. (in some
appropriate sense). Some extensions of these results are also
known for random regular graph sequences $\G(n,\Delta)$. A
large  general class of such weighted graphs $\Hg$ is described
in \cite{GamarnikRightConvergence} in terms of
suitable positive
semi-definiteness properties of the graph $\Hg$. Another class
of graphs $\Hg$ with respect to which convergence is known and
in fact the limit can be computed is when $\Hg$ corresponds to
the ferromagnetic Ising model and some
extensions~\cite{DemboMontanariIsing},\cite{DemboMontanariSun}.
We note that for the latter models, it suffices that the graph
sequence is left-convergent to some locally tree-like graphs
satisfying some mild additional properties.

Still, the question
whether the two most natural random graph sequences $\G(n,c/n)$
and $\G(n,\Delta)$ are right-convergent (with respect to all
$\Hg$) is still open. The same applies to all other notions of
convergence discussed in this paper (except for
left-convergence). We conjecture that the answer is yes for all
of them. Thanks to Theorem~\ref{theorem:ConvergenceRelations}
this would follow from the following conjecture.
\begin{conj}
The random graph sequences $\G(n,c/n)$ and $\G(n,\Delta)$ are
almost surely LD-convergent.
\end{conj}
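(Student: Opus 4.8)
The plan is to go through Theorem~\ref{theorem:LDconvergenceEquivalence}, reducing the conjecture to the statement that, almost surely, $\G(n,\Delta)$ (respectively $\G(n,c/n)$) is $k$-LD-convergent for every fixed $k$. Since $\Dk$ is compact, Proposition~\ref{prop:LDReverse} says that $k$-LD-convergence is equivalent to the existence, for every $(x,X)\in\Dk$ and every $\epsilon>0$, of the limit $\lim_n \frac1n\log N_n(x,X,\epsilon)$, where
\[
N_n(x,X,\epsilon)=\#\{\sigma\colon V_n\to[k]\text{ with }\G_n/\sigma\in B((x,X),\epsilon)\}
\]
(the subsequent limit $\epsilon\to 0$ being monotone, hence automatic). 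An essentially equivalent but technically cleaner reformulation is via the G\"artner--Ellis theorem: since $(x(\sigma_n),X(\sigma_n))$ takes values in the bounded set $\Dk$, $k$-LD-convergence with a good rate function follows once one knows that, almost surely, the cumulant generating function $\Lambda(\lambda):=\lim_n\frac1n\log Z(\G_n,\Hg_\lambda)+\log k$ exists for every choice of soft-core target $\Hg_\lambda$ — equivalently, that $\G_n$ is right-convergent with respect to \emph{all} soft-core graphs on $k$ nodes — and that $\Lambda$ is differentiable; the rate function $I_k$ is then obtained from $\Lambda$ by Legendre duality, consistently with \eqref{S-def}. (As $\Hg$ ranges over soft-core targets, the energy functional \eqref{E-def} sweeps out all linear functionals on $\Dk$, so this really is the full cumulant generating function.)

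The first step is routine: concentration. Realizing $\G(n,\Delta)$ via the permutation or configuration model and revealing the random matchings one at a time, a single switch changes $\G_n/\sigma$ by $O(1/n)$ in the $\Lnorm_\infty$ distance uniformly in $\sigma$, hence changes $\frac1n\log Z(\G_n,\Hg)$ by $O(1/n)$ — the partition function being the convenient object here, since working with $N_n$ directly forces one to absorb an $O(1/n)$ inflation of $\epsilon$ at each step. McDiarmid's inequality then gives exponential concentration of $\frac1n\log Z(\G_n,\Hg)$ about its mean, and a standard diagonalization over a countable dense family of rational-weight soft-core targets, together with Borel--Cantelli, reduces the almost-sure statement to convergence of the quenched free energy $\frac1n\E\log Z(\G_n,\Hg)$ for each soft-core $\Hg$.

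Proving the convergence of $\frac1n\E\log Z(\G_n,\Hg)$ for \emph{every} soft-core $\Hg$ — the existence of the thermodynamic limit of an arbitrary finite-spin model on a random regular graph — is the heart of the matter, and is the reason the statement is only a conjecture. For targets $\Hg$ with the positive-semidefiniteness structure isolated in \cite{GamarnikRightConvergence}, and for ferromagnetic and some antiferromagnetic models, the Guerra--Toninelli interpolation \cite{GuerraTon} and its combinatorial descendants (Franz--Leone \cite{FranzLeone}, Bayati--Gamarnik--Tetali \cite{BayatiGamarnikTetali}, Panchenko--Talagrand \cite{PanchenkoTalagrand}, Abbe--Montanari \cite{AbbeMontanari}) yield the sub- or super-additivity that forces the limit; the programme would be to push an interpolation of this kind through for \emph{all} soft-core $\Hg$, and additionally to control the derivative of the resulting limit in the couplings (ruling out a first-order transition) so as to supply the differentiability G\"artner--Ellis requires. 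The main obstacle is that no single interpolation scheme is presently known to cover all couplings: targets with indefinite coupling matrices — the genuine spin-glass regime — are exactly the open case, and even when the limit exists its differentiability is a further delicate point tied to absence of phase coexistence. For $\G(n,c/n)$ there is the extra wrinkle that the maximum degree is unbounded, so the machinery of this paper does not apply verbatim; one would first delete the $o(n)$ vertices of degree exceeding a slowly growing threshold — a modification changing every quantity above by a factor $e^{o(n)}$, hence harmless for convergence — and then run the same argument on the resulting bounded-degree graph.
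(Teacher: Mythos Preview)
The statement is a \emph{conjecture} in the paper, and the paper offers no proof for it; the surrounding discussion in Section~\ref{sec:discussion} explicitly flags the existence of the thermodynamic limit for general soft-core models on sparse random graphs as the central open problem. So there is nothing in the paper to compare your proposal against.

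Your write-up is not a proof either, and you say as much: the heart of the matter --- convergence of $\frac1n\E\log Z(\G_n,\Hg)$ for \emph{every} soft-core $\Hg$, plus differentiability of the limit in the couplings --- is left open, with only a pointer to interpolation techniques that are known not to cover the full range of targets. That is an honest assessment of where the difficulty lies, and the reduction you sketch (Theorem~\ref{theorem:LDconvergenceEquivalence} to reduce to $k$-LD-convergence, concentration via edge-exposure martingales to pass from almost-sure to expected behaviour, then G\"artner--Ellis to recover the LDP from the limiting log-moment generating function) is a sound programme. Two cautions are worth recording. First, G\"artner--Ellis genuinely needs essential smoothness of $\Lambda$, not merely its existence; right-convergence alone would not suffice, and the differentiability hypothesis amounts to ruling out first-order phase transitions across the \emph{entire} soft-core phase diagram --- a statement that, for spin-glass-type couplings, is itself widely believed but open. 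Second, for $\G(n,c/n)$ the paper's framework does not directly apply because of unbounded degrees; your proposed truncation is the natural fix, but the resulting truncated graph is no longer exactly an Erd\H{o}s--R\'enyi graph, so the interpolation arguments would need to be checked in that setting as well.

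In short: your proposal correctly identifies the obstacles and lays out a plausible line of attack, but it is a research programme rather than a proof, which is consistent with the paper presenting the statement as a conjecture.
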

The quantifier ''almost sure'' needs some elaboration. Here we
consider the product $\prod_n \mathcal{G}_n$, where
$\mathcal{G}_n$ is the set of all simple graphs on $n$ nodes.
We equip this product space with the natural product
probability measure, arising from having each $\mathcal{G}_n$
induced with random graph $\G(n,c/n)$ measure. Our conjecture
states that, almost surely with respect to this product
probability space $\mathcal{G}$, the graph sequence realization
is LD-convergent.

\section*{Acknowledgements}
We gratefully acknowledge fruitful discussions with Laci
Lov{\'a}sz and wish to thank him for providing the key ideas for
the counterexample in
Subsection~\ref{subsection:PartitionDoesNotImplyLeft}. The
third author also wishes to thank Microsoft Research New
England for providing exciting and hospitable environment
during his visit in 2012.

\bibliographystyle{amsalpha}

\bibliography{LD-bibliography}

\newcommand{\etalchar}[1]{$^{#1}$}
\providecommand{\bysame}{\leavevmode\hbox to3em{\hrulefill}\thinspace}
\providecommand{\MR}{\relax\ifhmode\unskip\space\fi MR }
\providecommand{\MRhref}[2]{%
  \href{http://www.ams.org/mathscinet-getitem?mr=#1}{#2}
}
\providecommand{\href}[2]{#2}
\begin{thebibliography}{BCL{\etalchar{+}}08}

\bibitem[AM10]{AbbeMontanari}
E.~Abbe and A.~Montanari, \emph{On the concentration of the number of solutions
  of random satisfiability formulas}, http://arxiv.org/abs/1006.3786 (2010).

\bibitem[BCKL13]{BorgsChayesKahnLovasz}
C.~Borgs, J.T. Chayes, J.~Kahn, and L.~Lov{\'a}sz, \emph{Left and right
  convergence of graphs with bounded degree}, Random Struct. Algorithms
  \textbf{42} (2013), no.~1, 1--28.

\bibitem[BCL{\etalchar{+}}06]{BorgsChayesCountingGraphHom}
C.~Borgs, J.~T. Chayes, L.~Lov{\'a}sz, V.~Sos, and K.~Vesztergombi,
  \emph{Counting graph homomorphisms}, Topics in Discrete Mathematics (eds. M.
  Klazar, J. Kratochvil, M. Loebl, J. Matousek, R. Thomas, P.Valtr), Springer,
  2006, pp.~315--371.

\bibitem[BCL10]{borgs2010moments}
C.~Borgs, J.~Chayes, and L.~Lov{\'a}sz, \emph{Moments of two-variable functions
  and the uniqueness of graph limits}, Geometric And Functional Analysis
  \textbf{19} (2010), no.~6, 1597--1619.

\bibitem[BCL{\etalchar{+}}12]{BorgsChayesEtAlGraphLimitsII}
C.~Borgs, J.T. Chayes, L.~Lov\'{a}sz, V.T. S\'{o}s, and K.~Vesztergombi,
  \emph{Convergent graph sequences {II}: Multiway cuts and statistical
  physics}, Ann. of Math. \textbf{176} (2012), 151--219.

\bibitem[BCL{\etalchar{+}}08]{BorgsChayesEtAlGraphLimitsI}
\bysame, \emph{Convergent graph sequences {I}: Subgraph frequencies, metric
  properties, and testing}, Advances in Math. \textbf{219} (208), 1801--1851.

\bibitem[BCP97]{BCP}
A.~Bertoni, P.~Campadelli, and R.~Posenato, \emph{An upper bound for the
  maximum cut mean value}, Graph-theoretic concepts in computer science
  (Lecture Notes in Computer Science, Vol.~1335), Springer, Berlin, 1997,
  pp.~78--84.

\bibitem[BGT10]{BayatiGamarnikTetali}
M.~Bayati, D.~Gamarnik, and P.~Tetali, \emph{Combinatorial approach to the
  interpolation method and scaling limits in sparse random graphs}, Proc. 42nd
  Ann. Symposium on the Theory of Computing (STOC),
  http://arxiv.org/abs/0912.2444, 2010.

\bibitem[Bil99]{Billingsley}
P.~Billingsley, \emph{Convergence of probability measures}, Wiley-Interscience
  publication, 1999.

\bibitem[BR11]{BollobasRiordanMetrics}
B.~Bollob\'{a}s and O.~Riordan, \emph{Sparse graphs: metrics and random
  models}, Random Structures and Algorithms \textbf{39} (2011), 1--38.

\bibitem[BS01]{benjamini_schramm}
I.~Benjamini and O.~Schramm, \emph{Recurrence of distributional limits of
  finite planar graphs}, Electronic Journal of Probability \textbf{23} (2001),
  1--13.

\bibitem[CV11]{ChatterjeeVaradhan}
S.~Chatterjee and S.R.S. Varadhan, \emph{The large deviation principle for the
  {E}rd{\"o}s-{R}\'{e}nyi random graph}, European Journal of Combinatorics
  \textbf{32} (2011), 1000 -- 1017.

\bibitem[DJ07]{diaconis2007graph}
P.~Diaconis and S.~Janson, \emph{Graph limits and exchangeable random graphs},
  Arxiv preprint arXiv:0712.2749 (2007).

\bibitem[DM10]{DemboMontanariIsing}
Amir Dembo and Andrea Montanari, \emph{{Ising models on locally tree-like
  graphs}}, Annals of Applied Probability \textbf{20} (2010), 565--592.

\bibitem[DMS11]{DemboMontanariSun}
A.~Dembo, A.~Montanari, and N.~Sun, \emph{Factor models on locally tree-like
  graphs}, Arxiv preprint arXiv:1110.4821 (2011).

\bibitem[DZ98]{demzei98}
A.~Dembo and O.~Zeitouni, \emph{Large deviations techniques and applications},
  Springer, 1998.

\bibitem[FL03]{FranzLeone}
S.~Franz and M.~Leone, \emph{Replica bounds for optimization problems and
  diluted spin systems}, Journal of Statistical Physics \textbf{111} (2003),
  no.~3/4, 535--564.

\bibitem[Gam]{GamarnikRightConvergence}
D.~Gamarnik, \emph{Right-convergence of sparse random graphs}, Preprint at
  http://arxiv.org/abs/1202.3123.

\bibitem[Geo88]{GeorgyGibbsMeasure}
H.~O. Georgii, \emph{Gibbs measures and phase transitions}, de Gruyter Studies
  in Mathematics 9, Walter de Gruyter \& Co., Berlin, 1988.

\bibitem[GT02]{GuerraTon}
F.~Guerra and F.L. Toninelli, \emph{The thermodynamic limit in mean field spin
  glass models}, Commun. Math. Phys. \textbf{230} (2002), 71--79.

\bibitem[HLS12]{HatamiLovaszSzegedy}
H.~Hatami, L.~Lov{\'a}sz, and B.~Szegedy, \emph{Limits of local-global
  convergent graph sequences}, Preprint at http://arxiv.org/abs/1205.4356
  (2012).

\bibitem[LS06]{LovaszSzegedy}
L.~Lov\'{a}sz and B.~Szegedy, \emph{Limits of dense graph sequences}, Journal
  of Combinatorial Theory, Series B \textbf{96} (2006), 933--957.

\bibitem[PT04]{PanchenkoTalagrand}
D.~Panchenko and M.~Talagrand, \emph{Bounds for diluted mean-fields spin glass
  models}, Probability Theory and Related Fields \textbf{130} (2004), 312--336.

\bibitem[Sim93]{SimonLatticeGases}
B.~Simon, \emph{The statistical mechanics of lattice gases, {V}ol. {I}},
  Princeton Series in Physics, Princeton University Press, Princeton, NJ, 1993.

\end{thebibliography}

\end{document}